\theoremstyle{plain}
\newtheorem{theorem}{Theorem}
\newtheorem{proposition}[theorem]{Proposition}
\newtheorem{lemma}[theorem]{Lemma}
\theoremstyle{definition}
\theoremstyle{remark}
\newtheorem{remark}{Remark}
\begin{document}
	
	\author{Phuong Le}
	\address{Phuong Le$^{1,2}$ (ORCID: 0000-0003-4724-7118)\newline
		$^1$Faculty of Economic Mathematics, University of Economics and Law, Ho Chi Minh City, Vietnam; \newline
		$^2$Vietnam National University, Ho Chi Minh City, Vietnam}
	\email{phuongl@uel.edu.vn}
	
	\subjclass[2020]{35J92, 35J75, 35B06, 35B53}
	\keywords{quasilinear elliptic equation, singular nonlinearity, half-space, monotonicity, rigidity}
	
	
	\title[Monotonicity and 1D symmetry for $p$-Laplace problems]{Quasilinear elliptic problems with singular nonlinearities in half-spaces}
	\begin{abstract}
		We study the monotonicity and one-dimensional symmetry of positive solutions to the problem $-\Delta_p u = f(u)$ in $\mathbb{R}^N_+$ under zero Dirichlet boundary condition, where $p>1$ and $f:(0,+\infty)\to\mathbb{R}$ is a locally Lipschitz continuous function with a possible singularity at zero. Classification results for the case $f(u)=\frac{1}{u^\gamma}$ with $\gamma>0$ are also provided.
	\end{abstract}
	
	\maketitle
	
	\section{Introduction}
	This paper is concerned with the qualitative properties of solutions to the $p$-Laplacian problem
	\begin{equation}\label{main}
		\begin{cases}
			-\Delta_p u = f(u) & \text{ in } \mathbb{R}^N_+,\\
			u>0 & \text{ in } \mathbb{R}^N_+,\\
			u=0 & \text{ on } \partial\mathbb{R}^N_+,
		\end{cases}
	\end{equation}
	where $p>1$ and $f:(0,+\infty)\to\mathbb{R}$ is a locally Lipschitz continuous function with a possible singularity at zero. As usual, $\Delta_p$ denotes the $p$-Laplacian and the upper half-space $\mathbb{R}^N_+$ is defined as
	\[
	\mathbb{R}^N_+ := \{x:=(x',x_N)\in\mathbb{R}^N \mid x_N>0 \}.
	\]
	When $f(t)$ blows up as $t\to0^+$, this problem is usually called a singular quasilinear elliptic problem. One may think of a prototype for \eqref{main} as
	\begin{equation}\label{singular}
		\begin{cases}
			-\Delta_p u = \dfrac{1}{u^\gamma} + g(u) & \text{ in } \mathbb{R}^N_+,\\
			u>0 & \text{ in } \mathbb{R}^N_+,\\
			u=0 & \text{ on } \partial\mathbb{R}^N_+,
		\end{cases}
	\end{equation}
	where $\gamma>1$ and $g:[0,+\infty)\to\mathbb{R}$ is a locally Lipschitz continuous function.
	The problems in half-spaces like this one are important because the half-space represents the simplest unbounded domain with an unbounded boundary. When performing a blow-up near the boundary in smooth domains, the problems often reduce to those in a half-space. This reduction is essential for understanding the behavior of solutions near boundaries in more complex domains, see \cite{MR3912757,MR4044739}.
	
	The monotonicity of solutions in the $x_N$-direction to problem \eqref{main} when $f$ is not singular was studied by several authors in the literature via the moving plane method. Berestycki, Caffarelli, and Nirenberg demonstrated in \cite{MR1395408,MR1655510} that if \( f:[0,+\infty) \) is a Lipschitz continuous function with \( f(0) \geq 0 \), then any positive classical solution of \eqref{main} with \( p = 2 \) is increasing in the \( x_N \)-direction, and additionally, \( \frac{\partial u}{\partial x_N} > 0 \) in \( \mathbb{R}^N_+ \). Earlier monotonicity results for the problem \eqref{main} with \( p = 2 \) can also be found in the works of Dancer \cite{MR1190345,MR2525168}. When \( f \) is only locally Lipschitz continuous on $[0,+\infty)$, similar monotonicity can be established for positive solutions that are bounded on strips, as shown in \cite{MR4142367,MR2254600}. The case where \( f(0) < 0 \) is more complex, with a complete proof of monotonicity for solutions in this case being available only in dimension \( N = 2 \) in the works of Farina and Sciunzi \cite{MR3593525,MR3641643}.
	
	Studying problem \eqref{main} when \( p \neq 2 \) presents several challenges, primarily due to the nonlinearity of the \( p \)-Laplacian for \( p \neq 2 \). This nonlinearity indicates that comparison principles are not equivalent to maximum principles for the \( p \)-Laplacian. Additionally, the operator's singular or degenerate nature (corresponding to \( 1 < p < 2 \) and \( p > 2 \), respectively) leads to a lack of \( C^2 \) regularity in the solutions at their critical points. Overcoming these difficulties, the moving plane method for problem \eqref{main} has been extensively developed in a series of papers by Farina, Montoro, Sciunzi, and their collaborators \cite{MR3303939,MR2886112,MR4439897,MR3752525,MR3118616}. Assuming that \( f \) is positive on \((0,+\infty)\), they proved the monotonicity of solutions in the \( x_N \)-direction for the case \( 1 < p < 2 \) in \cite{MR3303939,MR2886112}. For \( p > 2 \), the monotonicity result was established in \cite{MR3752525,MR3118616} under the conditions that \( f \) is either sublinear or superlinear. Recently, it was shown in \cite{MR4439897} that when \( \frac{2N+2}{N+2}<p<2 \), the requirement for \( f \) to be positive can be relaxed.
	In all of these works, it is assumed that \( f:[0,+\infty) \) is a locally Lipschitz continuous function and solutions $u$ to \eqref{main} satisfy $u \in C^{1,\alpha}_{\rm loc}(\overline{\mathbb{R}^N_+})$ and
	\begin{equation}\label{gradient_bound_strips}
		|\nabla u| \in L^\infty(\Sigma_\lambda) \quad\text{ for all } \lambda>0,
	\end{equation}
	where the sets $\Sigma_\lambda := \{(x',x_N)\in\mathbb{R}^N \mid 0<x_N<\lambda \}$ are called strips. By the mean value theorem and the Dirichlet boundary condition, one can check that assumption \eqref{gradient_bound_strips} implies
	\begin{equation}\label{bound_strips}
		u\in L^\infty(\Sigma_\lambda) \text{ for all } \lambda>0 \text{ and } \lim_{x_N\to0^+} u(x',x_N) = 0 \text{ uniformly in } x'\in\mathbb{R}^{N-1}_+.
	\end{equation}
	Conversely, if $u$ satisfies \eqref{bound_strips}, then $u$ and $f(u)$ are bounded in each strip $\Sigma_\lambda$ and the gradient bound \eqref{gradient_bound_strips} can be obtained via standard elliptic estimates (see \cite{MR709038,MR727034}).
	
	To the best of our knowledge, problem \eqref{main} with a singular nonlinearity such as \eqref{singular} has not been well studied in the literature, except for the case $p=2$ (see \cite{MR4753083,2024arXiv240403343M,2024arXiv240900365L}). Compared to the regular nonlinearity, the difficulty in studying this problem is magnified by the fact that the gradient of solutions to singular problems usually exhibits singularity on the boundary of the domain (see \cite{MR1037213}). Hence it is not appropriate to enforce assumptions $u \in C^{1,\alpha}_{\rm loc}(\overline{\mathbb{R}^N_+})$ as well as \eqref{gradient_bound_strips} when studying \eqref{main} or \eqref{singular}. Instead, we are interested in qualitative properties of weak solutions $u \in C^{1,\alpha}_{\rm loc}(\mathbb{R}^N_+) \cap C(\overline{\mathbb{R}^N_+})$ which satisfy assumption \eqref{bound_strips}. By weak solutions, we mean
	\[
	\int_{\mathbb{R}^N_+} (|\nabla u|^{p-2} \nabla u, \nabla \varphi) = \int_{\mathbb{R}^N_+} f(u) \varphi \quad\text{ for all } \varphi\in C^1_c(\mathbb{R}^N_+).
	\]
	On the other hand, the $C^{1,\alpha}_{\rm loc}(\mathbb{R}^N_+)$ regularity is a natural one taking into account the standard regularity results in \cite{MR709038,MR727034,MR969499}.	
	Moreover, we have the following criterion for the uniform convergence of $u$ near the boundary mentioned in \eqref{bound_strips}.
	\begin{proposition}\label{prop:zero_convergence}
		Assume that $p>1$ and $f:(0,+\infty)\to\mathbb{R}$ is a locally Lipschitz continuous function such that $f$ is strictly decreasing on $(0,\rho)$ for some $\rho>0$.
		Let $u \in C^{1,\alpha}_{\rm loc}(\mathbb{R}^N_+) \cap C(\overline{\mathbb{R}^N_+})$ be a solution to \eqref{main}. If $\|u\|_{L^\infty(\Sigma_{\overline\lambda})} < \rho$ for some $\overline\lambda>0$, then
		\[
		\lim_{x_N\to0^+} u(x',x_N) = 0 \text{ uniformly in } x'\in\mathbb{R}^{N-1}_+.
		\]
	\end{proposition}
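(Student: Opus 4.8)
The plan is to bound $u$ near $\partial\mathbb{R}^N_+$ above by a one-dimensional barrier $w=w(x_N)$ that vanishes on $\{x_N=0\}$ — natural since both \eqref{main} and its data are invariant under translations in $x'$ — and then compare. Put $M:=\|u\|_{L^\infty(\Sigma_{\overline\lambda})}$, which is finite and lies in $(0,\rho)$ by hypothesis, and fix a primitive $F$ of $f$ on $(0,\rho)$, i.e. $F'=f$. Since $f$ is decreasing on $(0,\rho)$ it is bounded from below on $(0,M]$, hence $F$ is bounded from above there, and we set $F^*:=\sup_{(0,M]}F<+\infty$. For each $E>F^*$ we have $E-F(s)\ge E-F^*>0$ on $(0,M]$, and I would take as barrier the increasing function $w_E\colon[0,t_E]\to[0,M]$ characterized by
\[
t=\int_0^{w_E(t)}\Bigl(\tfrac{p}{p-1}\bigl(E-F(s)\bigr)\Bigr)^{-1/p}\,ds,\qquad t_E:=\int_0^{M}\Bigl(\tfrac{p}{p-1}\bigl(E-F(s)\bigr)\Bigr)^{-1/p}\,ds .
\]
Equivalently, $w_E$ satisfies $w_E(0)=0$, $w_E(t_E)=M$, has first integral $\tfrac{p-1}{p}|w_E'|^{p}+F(w_E)\equiv E$, and solves $-(|w_E'|^{p-2}w_E')'=f(w_E)$ on $(0,t_E)$. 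Because $E-F(s)\ge E-F^*$, one has $t_E\le M\,\bigl(\tfrac{p}{p-1}(E-F^*)\bigr)^{-1/p}\to0$ as $E\to+\infty$, so I would fix $E$ so large that $t_E\le\overline\lambda$. Observe that $w_E$ is continuous on $[0,t_E]$ with $w_E(0)=0$ and is a classical solution of the ODE on $(0,t_E)$; in the truly singular case $f(0^+)=+\infty$ one has $w_E'(0^+)=+\infty$, yet $w_E$ remains a weak solution on the open interval $(0,t_E)$.

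Now regard $w_E$ as the function $(x',x_N)\mapsto w_E(x_N)$ on $\Sigma_{t_E}$; then $-\Delta_p w_E=f(w_E)$ in $\Sigma_{t_E}$, and, as $t_E\le\overline\lambda$, on $\partial\Sigma_{t_E}$ we have $w_E=0=u$ on $\{x_N=0\}$ and $w_E=M\ge u$ on $\{x_N=t_E\}$, so $w_E\ge u$ on $\partial\Sigma_{t_E}$. Inside $\Sigma_{t_E}$ both $u$ and $w_E$ take values in $(0,\rho)$, where $f$ is non-increasing, so the weak comparison principle for $v\mapsto-\Delta_p v-f(v)$ gives $u\le w_E$ in $\Sigma_{t_E}$; that is, $0<u(x',x_N)\le w_E(x_N)$ for all $x'\in\mathbb{R}^{N-1}$ and all $0<x_N\le t_E$. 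Since $w_E$ is continuous with $w_E(0)=0$, letting $x_N\to0^+$ yields $\lim_{x_N\to0^+}u(x',x_N)=0$ uniformly in $x'$, as desired.

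The main difficulty is to make the comparison in the previous step rigorous \emph{up to the boundary}: for singular nonlinearities $|\nabla u|$ and $|\nabla w_E|$ may blow up as $x_N\to0^+$, so $(u-w_E)^+$ need not belong to $W^{1,p}(\Sigma_{t_E})$ and the comparison principle is not available directly. I would argue as is customary for singular $p$-Laplace problems: insert into the weak form of $-\Delta_p u+\Delta_p w_E=f(u)-f(w_E)$ the test function $T_k\bigl((u-w_E)^+\bigr)\,\eta_R(x')^{p}$, where $T_k$ truncates at height $k>0$ and $\eta_R$ is a cutoff in the $x'$-variable; the monotonicity of $\xi\mapsto|\xi|^{p-2}\xi$ makes the principal term non-negative, the monotonicity of $f$ renders the zero-order term non-positive on $\{u>w_E\}$, and the uniform bounds $0<u,w_E\le M$ let one absorb the cutoff error as $R\to\infty$ and then let $k\to\infty$, forcing $(u-w_E)^+\equiv0$. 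A secondary, routine point is that $w_E$ is a genuine weak solution of the one-dimensional equation right down to $x_N=0$, which follows from its explicit integral representation together with interior elliptic regularity; notice that only the monotonicity of $f$ on $(0,\rho)$, not its strictness, is used in this proposition.
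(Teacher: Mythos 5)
Your barrier is fine and your overall strategy (a one-dimensional barrier vanishing at $x_N=0$ built from a first integral, then comparison in a thin strip) is essentially the paper's; the genuine gap is the comparison step, which you only sketch and which does not go through as written. First, $T_k\bigl((u-w_E)^+\bigr)\eta_R(x')^p$ is not an admissible test function: its support reaches $\{x_N=0\}$, since neither $u$ nor $w_E$ is known to dominate the other near the boundary, while the weak formulation only admits test functions compactly supported in $\mathbb{R}^N_+$. Making it admissible forces an extra cutoff in $x_N$ near $0$, and the resulting boundary-layer term involves $|\nabla u|^{p-1}+|\nabla w_E|^{p-1}$ on $\{\delta<x_N<2\delta\}$; as you yourself observe, $w_E'(0^+)=+\infty$ in the singular case, and no bound on $|\nabla u|$ near $\{x_N=0\}$ is available at this stage (that is precisely what singular problems lack), so this error cannot be controlled. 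Truncating at height $k$ does not help, since $(u-w_E)^+\le M$ is already bounded; the problem is in the $x_N$-direction, not in the height. Second, even away from the boundary, in the unbounded strip the $x'$-cutoff error is of size $O(R^{N-2})$ if one only uses $0<u,w_E\le M$ together with bounded gradients; boundedness alone does not make it vanish. In the paper this term is killed by using the \emph{strict} decrease of $f$ quantitatively (Lemma \ref{lem:k}), which yields $f(u)-f(w_E)\le -C_\varepsilon(u-w_E)$ on the active set and hence a negative zero-order term $-C_\varepsilon\int w^{\alpha+1}\varphi_R^{\alpha+1}$ into which the Young-inequality residue is absorbed, leaving $O(R^{N-\alpha-2})\to0$ for $\alpha>N-2$. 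Your closing remark that only non-strict monotonicity of $f$ is used is therefore unsupported: your absorption claim has no mechanism behind it.

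The repair is exactly the device used in the paper: do not compare with $w_E(x_N)$ directly, but with the shifted barrier $v_\varepsilon(x):=w_E(x_N+\varepsilon)$ on $\Sigma_{t_E-\varepsilon}$ (the paper does this with a barrier built from a majorant $h>\max\{f,0\}$, but your $w_E$ works equally well). The shift makes the supersolution bounded below by $w_E(\varepsilon)>0$ and gives $|\nabla v_\varepsilon|\in L^\infty(\Sigma_{t_E-\varepsilon})$, so the comparison is exactly Proposition \ref{prop:wcp} (with Remark \ref{rem:ineq} if one prefers to view $v_\varepsilon$ as a supersolution), whose proof is where the boundary issue and the $R$-absorption via Lemma \ref{lem:k} are handled; on the top boundary $\{x_N=t_E-\varepsilon\}$ one still has $u\le M=w_E(t_E)=v_\varepsilon$. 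Letting $\varepsilon\to0$ then gives $u\le w_E(x_N)$ in $\Sigma_{t_E}$ and the uniform convergence follows as you conclude.
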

	As we can see in problem \eqref{singular}, the assumption that the nonlinearity is strictly decreasing near zero is reasonable for singular problems. To state our main results, we denote by $Z_f$ the set of zeros of $f$ in $(0,+\infty)$, that is,
	\[
	Z_f:=\{t\in(0,+\infty) \mid f(t)=0\}.
	\]
	
	Our main idea in tackling \eqref{main} or \eqref{singular} is to isolate a small strip $\Sigma_{\tilde{\lambda}}$ where the singular phenomenon may appear. We show that $u$ is monotone increasing in this strip via a weak comparison principle. Then in the remaining area $\mathbb{R}^N_+\setminus\Sigma_{\tilde{\lambda}}$, we basically combine the techniques in \cite{MR3303939,MR2886112,MR4439897,MR3752525,MR3118616} to address the monotonicity of solutions to \eqref{main} in the full domain. Our first result is the following:.
	\begin{theorem}\label{th:monotonicity} Assume that $f:(0,+\infty)\to\mathbb{R}$ is a locally Lipschitz continuous function such that
		\begin{itemize}
			\item[(i)] $\lim_{t\to0^+} f(t) > 0$,
			\item[(ii)] $f$ is strictly decreasing on $(0,t_0)$ for some $t_0>0$,
			\item[(iii)] either $\frac{2N+2}{N+2} < p < 2$ and $Z_f$ is a discrete set, or $p>1$ and $f(t)>0$ for $t>0$.
		\end{itemize}
		Let $u \in C^{1,\alpha}_{\rm loc}(\mathbb{R}^N_+) \cap C(\overline{\mathbb{R}^N_+})$ be a solution to problem \eqref{main} satisfying \eqref{bound_strips}.
		Then $u$ is monotone increasing in $x_N$.
	\end{theorem}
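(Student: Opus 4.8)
The plan is to run the moving-plane method in the $x_N$-direction, isolating a thin boundary strip---where the singularity of $f$, and the possible blow-up of $\nabla u$, are confined and a weak comparison principle suffices---from the rest of the half-space, where the equation is a ``regular'' $p$-Laplacian problem amenable to the techniques of \cite{MR3303939,MR2886112,MR4439897,MR3752525,MR3118616}. For $\lambda>0$ I write $x_\lambda:=(x',2\lambda-x_N)$ and $u_\lambda(x):=u(x_\lambda)$; since $\Delta_p$ commutes with this reflection, $u_\lambda$ is a weak solution of $-\Delta_p u_\lambda=f(u_\lambda)$ on $\{x_N<2\lambda\}$ and there---$2\lambda-x_N$ staying bounded away from $0$---is of class $C^{1,\alpha}_{\rm loc}$ up to $\{x_N=0\}$. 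It suffices to prove that $u\le u_\lambda$ in $\Sigma_\lambda$ for every $\lambda>0$; taking $\lambda=(a+b)/2$ then gives $u(x',a)\le u(x',b)$ whenever $0<a<b$.

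\emph{Step 1: monotonicity in a boundary strip.} Using \eqref{bound_strips} and (i), I would first fix $\tilde\lambda>0$ small enough that $0\le u<t_0$ on $\Sigma_{\tilde\lambda}$ and $f\ge c_0>0$ on the range of $u$ over $\Sigma_{\tilde\lambda}$. For every $x'\in\mathbb{R}^{N-1}$ and $0<s<2\tilde\lambda/3$, the ball $B_{s/2}\big((x',s)\big)$ is a compact subset of $\mathbb{R}^N_+$ on which $-\Delta_p u\ge c_0$, with $u\ge 0$ on the bounding sphere; comparing $u$ there with the radial solution $\zeta$ of $-\Delta_p\zeta=c_0$ vanishing on that sphere yields the lower bound $u(x',s)\ge c_1 s^{p/(p-1)}$, with $c_1>0$ depending only on $N$, $p$, $c_0$. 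Consequently, for $0<\lambda\le\tilde\lambda/3$ we have $u_\lambda\ge c_1\lambda^{p/(p-1)}>0$ throughout $\Sigma_\lambda$, so by the uniform decay in \eqref{bound_strips} the set $\{u>u_\lambda\}$ lies, uniformly in $x'$, inside $\{x_N\ge\delta\}\cap\Sigma_\lambda$ for some $\delta>0$; there $u$ is also bounded below by a positive constant, so $f(u)$ is bounded and, by standard interior estimates, $\nabla u$ and $\nabla u_\lambda$ are bounded uniformly in $x'$. Now $w:=(u-u_\lambda)^+$ is bounded, vanishes on both faces of $\Sigma_\lambda$, has bounded gradient on its (boundary-avoiding) support, and---since $u$ and $u_\lambda$ take values in $(0,t_0)$ on $\Sigma_\lambda$, where $f$ is strictly decreasing by (ii)---satisfies $\big(f(u)-f(u_\lambda)\big)w\le 0$ there. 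I would then test the difference of the two weak formulations with $w\,\psi_R^p$, where $\psi_R$ is the usual cutoff in the $x'$-variables, $|\nabla\psi_R|\le C/R$; combining the monotonicity of $\xi\mapsto|\xi|^{p-2}\xi$, the sign just noted, the one-dimensional Poincar\'e inequality in $x_N$ (legitimate because $w$ vanishes on a face), and the boundedness of $u$ and of $\nabla w$ on the support of $w$ (which keeps the $p$-energy of $w$ over $\{|x'|\le R\}$ of polynomial growth), a dyadic iteration in $R$ forces $w\equiv 0$. Hence $u\le u_\lambda$ in $\Sigma_\lambda$ for all $\lambda\le\tilde\lambda/3$, i.e. $u$ is increasing in $x_N$ on $\Sigma_{\tilde\lambda/3}$.

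\emph{Step 2: propagation to the whole half-space.} The monotonicity on $\Sigma_{\tilde\lambda/3}$ supplies the near-boundary input that the Hopf lemma provides in the non-singular theory; with it, the moving-plane scheme of \cite{MR3303939,MR2886112,MR4439897,MR3752525,MR3118616} can be carried out on all of $\mathbb{R}^N_+$. Set $\Lambda:=\sup\{\lambda>0:\ u\le u_\mu\text{ in }\Sigma_\mu\text{ for all }\mu\in(0,\lambda]\}\ge\tilde\lambda/3>0$ and suppose, for contradiction, that $\Lambda<\infty$. By continuity $u\le u_\Lambda$ in $\Sigma_\Lambda$, and $u\not\equiv u_\Lambda$ (they disagree on $\{x_N=0\}$), so by the strong comparison principle for the $p$-Laplacian---available precisely thanks to hypothesis (iii), that is, either $f>0$ on $(0,\infty)$ or $\frac{2N+2}{N+2}<p<2$ with $Z_f$ discrete, which is exactly the setting of the cited works---one obtains $u<u_\Lambda$ off the critical set of $u$. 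I would then run the continuation step of those works: a strong comparison at $\lambda=\Lambda$, followed by a weighted Sobolev/Poincar\'e estimate with weight $|\nabla u|^{p-2}$---whose summability is granted by hypothesis (iii)---combined with the smallness of the weighted measure of $\{u>u_\lambda\}\cap\Sigma_\lambda$, yields $u\le u_\lambda$ in $\Sigma_\lambda$ for $\lambda$ slightly larger than $\Lambda$; the region near $\{x_N=0\}$ is controlled as in Step 1, since there $u_\lambda$ is again a positive regular function. This contradicts the maximality of $\Lambda$. Hence $\Lambda=+\infty$, and $u$ is increasing in $x_N$ throughout $\mathbb{R}^N_+$.

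The step I expect to be the main obstacle is Step 1: one has to run a comparison principle on the \emph{unbounded} strip $\Sigma_\lambda$ for a solution whose gradient may be unbounded near $\{x_N=0\}$. The two ingredients that make it work---and that replace the $C^{1,\alpha}(\overline{\mathbb{R}^N_+})$ and gradient-bound hypotheses used in the non-singular theory---are the uniform-in-$x'$ lower bound $u(x',s)\ge c_1 s^{p/(p-1)}$ near the boundary (deduced from $f\ge c_0>0$ there by a comparison on a small ball lying inside $\mathbb{R}^N_+$), which confines $\{u>u_\lambda\}$ to a region where $\nabla u$ is controlled and hence makes $w\,\psi_R^p$ an admissible test function; and the dyadic energy iteration that closes the weak comparison on the unbounded strip using only that $u$ is bounded and that $w$ vanishes on the faces. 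A secondary, more routine difficulty is to verify that the weighted inequalities and the critical-set analysis of \cite{MR4439897,MR3118616} transfer to the region away from the boundary and glue to Step 1; there the equation is a standard---and, for $1<p<2$, possibly sign-changing---$p$-Laplacian problem, so the cited arguments apply with only minor modifications.
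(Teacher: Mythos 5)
Your overall architecture is exactly the paper's: a lower bound $u\gtrsim x_N^{p/(p-1)}$ near the boundary (the paper's Lemma \ref{lem:lower_bound} with $\gamma=0$), a weak comparison principle on a strip to start the moving planes (the paper's Proposition \ref{prop:wcp} and Proposition \ref{prop:mn}), and then the continuation step away from $\{x_N=0\}$, where $u$ and $u_\lambda$ are bounded between positive constants so that the arguments of \cite{MR3303939,MR2886112,MR3118616,MR4439897} apply verbatim; your Step 2 is deferred to that literature at the same level of detail as the paper itself, which is acceptable.

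The genuine gap is in how you close Step 1. You test the difference of the equations with $w\,\psi_R^p$, $w=(u-u_\lambda)^+$, and use hypothesis (ii) only through the sign $\big(f(u)-f(u_\lambda)\big)w\le 0$, hoping that boundedness of $w$ and of the gradients plus a dyadic iteration in $R$ forces $w\equiv0$. This does not close for $N\ge 3$: the cutoff term is of order $\frac{1}{R}\cdot R^{N-1}\cdot\lambda\sim R^{N-2}$, and after Young's inequality the best you get is an estimate of the form $\mathcal{E}(R)\le\delta\,\mathcal{E}(2R)+C_\delta R^{N-3}$ for the localized energy; since $w$ is merely bounded (not decaying) in $x'$, the inhomogeneous term $R^{N-3}$ is not summable against the geometric factor, so the iteration only yields $\mathcal{E}(R)=O(R^{N-3})$, not $\mathcal{E}\equiv0$. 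Nor can you fall back on the classical ``narrow strip'' weak comparison with Lipschitz $f$: on $\{u>u_\lambda\}$ the values of $u,u_\lambda$ are only bounded below by $c_1\lambda^{p/(p-1)}$, and since $f$ may blow up at $0$ its Lipschitz constant on that range explodes as $\lambda\to0$ (e.g.\ for $f(t)=t^{-\gamma}$ it grows like $\lambda^{-(\gamma+1)p/(p-1)}$), so the smallness of the Poincar\'e constant cannot compensate. The missing mechanism — and what the paper actually uses — is the \emph{quantitative} strict decrease of $f$ on $(0,t_0)$: setting $w=(u-u_\lambda-\varepsilon)^+$, Lemma \ref{lem:k} gives $f(u)-f(u_\lambda)\le -C_\varepsilon w$ on $\{w>0\}$, and testing with $w^\alpha\varphi_R^{\alpha+1}$ with $\alpha>N-2$ lets this good negative term absorb the cutoff error via weighted Young, leaving $O(R^{N-\alpha-2})\to0$; one then lets $\varepsilon\to0$. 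With that correction your Step 1 becomes the paper's Proposition \ref{prop:wcp}, and the rest of your proof matches the paper's.
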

	
	\begin{remark}\label{rem:1}
		Once the monotonicity of $u$ is obtained, we may argue as in \cite[Lemma 14]{10.1007/s11118-024-10157-1} to further derive
		\[
		\frac{\partial u}{\partial x_N} > 0 \quad\text{ in } \mathbb{R}^N_+\setminus(Z_{f(u)}\cap Z_u)
		\]
		provided that $p>\frac{2N+2}{N+2}$, where $Z_{f(u)}:=\{x\in\mathbb{R}^N_+ \mid f(u(x))=0\}$ and $Z_u:=\{x\in\mathbb{R}^N_+ \mid |\nabla u(x)|=0\}$ (see also \cite[Theorem 1.2]{MR2188744} and \cite[Theorem 1.1]{MR4439897}).
	\end{remark}
	
	If $g:[0,+\infty)\to\mathbb{R}$ is locally Lipschitz continuous, then $t\mapsto \frac{1}{t^\gamma} + g(t)$ is strictly decreasing on $(0,t_0)$ for some $t_0>0$.
	If we further assume that either $\frac{2N+2}{N+2} < p < 2$ and $\{t\in(0,+\infty) \mid \frac{1}{t^\gamma} + g(t)=0\}$ is a discrete set, or $\frac{1}{t^\gamma} + g(t)>0$ for $t>0$. Then Theorem \ref{th:monotonicity} and Proposition \ref{prop:zero_convergence} indicate that every solution $u \in C^{1,\alpha}_{\rm loc}(\mathbb{R}^N_+) \cap C(\overline{\mathbb{R}^N_+})$ to problem \eqref{singular} with $u\in L^\infty(\Sigma_\lambda)$ for all $\lambda>0$ and $\|u\|_{L^\infty(\Sigma_{\overline\lambda})} < t_0$ for some $\overline\lambda>0$ is monotone increasing in $x_N$. By Remark \ref{rem:1}, we further derive $\frac{\partial u}{\partial x_N} > 0$ provided that $p>\frac{2N+2}{N+2}$ and $\frac{1}{t^\gamma} + g(t)>0$ for $t>0$.
	
	In fact, due to the appearance of the explicit singular term $\frac{1}{t^\gamma}$, we would expect stronger results on the monotonicity of solutions to problem \eqref{singular}, in particular, on their behavior near the boundary. In \cite{MR4044739} Esposito and Sciunzi showed that inward directional derivatives near the boundary of solutions to problem \eqref{singular} posed in a bounded domain is necessarily positive for all $p>1$ regardless of the sign of $t\mapsto\frac{1}{t^\gamma} + g(t)$. In the next theorem, we not only prove a similar claim and a monotonicity result for \eqref{singular} but also provide a sharp estimate on derivatives, which indicates that they must blow up at a proper rate near the boundary.
	\begin{theorem}\label{th:monotonicity2} Assume that $p>1$, $\gamma>1$ and $g:[0,+\infty)$ is a locally Lipschitz continuous function.
		Let $u \in C^{1,\alpha}_{\rm loc}(\mathbb{R}^N_+) \cap C(\overline{\mathbb{R}^N_+})$ be a solution to problem \eqref{singular} with $u\in L^\infty(\Sigma_\lambda)$ for all $\lambda>0$. Then for every $\beta\in(0,1)$, there exist $c_1,c_2,\lambda_0>0$ such that
		\begin{equation}\label{gradient_blowup}
			c_1 x_N^{\frac{1-\gamma}{\gamma+p-1}} < \frac{\partial u(x)}{\partial \eta} < c_2 x_N^{\frac{1-\gamma}{\gamma+p-1}} \quad\text{ in } \Sigma_{\lambda_0}
		\end{equation}
		for all $\eta\in \mathbb{S}^{N-1}_+$ with $(\eta,e_N) \ge \beta$, where $\mathbb{S}^{N-1}_+:=\mathbb{R}^N_+ \cap \partial B_1(0)$ and $e_N:=(0,\dots,0,1)$. If we further assume that either $\frac{2N+2}{N+2} < p < 2$ and $\{t\in(0,+\infty) \mid \frac{1}{t^\gamma} + g(t)=0\}$ is a discrete set, or $\frac{1}{t^\gamma} + g(t)>0$ for $t>0$, then $u$ is monotone increasing in $x_N$. Moreover, we have $\frac{\partial u}{\partial x_N} > 0$ in $\mathbb{R}^N_+$ provided that $p>\frac{2N+2}{N+2}$ and $\frac{1}{t^\gamma} + g(t)>0$ for $t>0$.
	\end{theorem}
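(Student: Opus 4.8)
The plan is to first pin down the boundary behaviour $u\sim x_N^{\sigma}$ with $\sigma:=\frac{p}{\gamma+p-1}\in(0,1)$ (the exponent is forced by $(\sigma-1)(p-1)-1=-\sigma\gamma$, equivalently $p-\sigma(p-1)-\sigma\gamma=0$), then to differentiate this information by a blow-up argument, and finally to read off the monotonicity statements from Theorem \ref{th:monotonicity} and Remark \ref{rem:1}.

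\textbf{Step 1 (boundary profile).} Since $u\in L^\infty(\Sigma_{\lambda_1})$, one has $|g(u)|\le C_g$ there, so $\frac1{u^\gamma}-C_g\le-\Delta_p u\le\frac1{u^\gamma}+C_g$. A direct computation gives $-\Delta_p(A x_N^{\sigma})=c_0 A^{p-1}x_N^{-\sigma\gamma}$ with $c_0=\sigma^{p-1}(1-\sigma)(p-1)>0$, so for $A$ large $A x_N^{\sigma}$ is a supersolution of $-\Delta_p v=\frac1{v^\gamma}+C_g$ on a small strip $\Sigma_{\lambda_0}$ lying above $u$ on $\partial\Sigma_{\lambda_0}$; using that $t\mapsto t^{-\gamma}$ is decreasing and testing the resulting inequality on $(u-Ax_N^{\sigma})^+$ (with a cut-off in $x'$), I would get $u\le A x_N^{\sigma}$ in $\Sigma_{\lambda_0}$. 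A strip is not available for the lower bound (the trace of $u$ on $\{x_N=\lambda_0\}$ is not controlled from below), so for each $x_0'$ I would instead compare $u$ on the ball $B=B_R((x_0',R))\subset\mathbb{R}^N_+$ (with $R$ small, so $u$ is small on $B$ by the upper bound just obtained) with the subsolution $\min\{a\,d(\cdot,\partial B)^{\sigma},aR^{\sigma}\}$, using $\Delta_p(d^{\sigma})=c_0 d^{-\sigma\gamma}(1+O(d))$ near the smooth boundary $\partial B$ and $d((x_0',x_N),\partial B)=x_N$ for $0<x_N<R$; this yields $u(x_0',x_N)\ge a x_N^{\sigma}$ for small $a$. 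This is the half-space analogue of the estimates in \cite{MR1037213,MR4044739}, and it gives $c_1 x_N^{\sigma}\le u\le c_2 x_N^{\sigma}$ in some $\Sigma_{\lambda_0}$; in particular $u$ satisfies \eqref{bound_strips}.

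\textbf{Step 2 (gradient estimate).} Fix $x_0=(x_0',t)$ with $0<t<\lambda_0$ and set $u_t(\zeta):=t^{-\sigma}u(x_0'+t\zeta',t\zeta_N)$. By the scaling relation above, $u_t$ solves $-\Delta_p u_t=\frac1{u_t^\gamma}+t^{\,p-\sigma(p-1)}g(u(x_0'+t\zeta',t\zeta_N))$ where defined, vanishes on $\{\zeta_N=0\}$, and obeys $c_1\zeta_N^{\sigma}\le u_t\le c_2\zeta_N^{\sigma}$ by Step 1; since $p-\sigma(p-1)>1$ the $g$-term tends to $0$ with $t$. On compact subsets of $\mathbb{R}^N_+$ the $u_t$ are thus bounded above and below by positive constants with bounded right-hand side, so by the interior $C^{1,\alpha}$ estimates \cite{MR709038,MR727034,MR969499} (valid for all $p>1$) the family $\{u_t\}$ is precompact in $C^1_{\mathrm{loc}}(\mathbb{R}^N_+)$, and every limit $w$ (as $t\to0$, for arbitrary $x_0'$) is a positive solution of $-\Delta_p w=w^{-\gamma}$ in $\mathbb{R}^N_+$ vanishing on $\partial\mathbb{R}^N_+$, satisfying \eqref{bound_strips}, with $c_1\zeta_N^{\sigma}\le w\le c_2\zeta_N^{\sigma}$. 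The key point is then that any such $w$ must equal the explicit one-dimensional profile $w(\zeta)=\kappa\zeta_N^{\sigma}$ (where $c_0\kappa^{p-1}=\kappa^{-\gamma}$): this one-dimensional symmetry for $-\Delta_p\cdot=(\cdot)^{-\gamma}$ in the half-space I would get by applying Theorem \ref{th:monotonicity} to $w$ (legitimate for every $p>1$, since $f=t^{-\gamma}>0$ makes hypothesis (iii) hold) to obtain monotonicity in $\zeta_N$, and then a sliding argument in the tangential directions together with the uniform bounds $c_1\zeta_N^\sigma\le w\le c_2\zeta_N^\sigma$ to kill the $\zeta'$-dependence. Granting this, $t^{1-\sigma}\nabla u(x_0',t)=\nabla u_t(e_N)\to\nabla w(e_N)=\sigma\kappa\,e_N$, and a standard compactness argument upgrades this to convergence uniform in $x_0'$; since $\{\eta\in\mathbb{S}^{N-1}_+:(\eta,e_N)\ge\beta\}$ is compact and $\frac{\partial u}{\partial\eta}=(\eta,\nabla u)$, I then obtain $\frac12\sigma\kappa\beta\,t^{\sigma-1}<\frac{\partial u}{\partial\eta}(x_0',t)<2\sigma\kappa\,t^{\sigma-1}$ on a small $\Sigma_{\lambda_0}$ for all such $\eta$, which is exactly \eqref{gradient_blowup} because $\sigma-1=\frac{1-\gamma}{\gamma+p-1}$.

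\textbf{Step 3 (monotonicity) and the main obstacle.} Under the additional hypothesis, $f(t)=\frac1{t^\gamma}+g(t)$ is locally Lipschitz on $(0,+\infty)$ with $\lim_{t\to0^+}f(t)=+\infty>0$ and is strictly decreasing on some $(0,t_0)$ (the term $\frac1{t^\gamma}$ dominating the locally Lipschitz $g$ near $0$), condition (iii) of Theorem \ref{th:monotonicity} is precisely what is assumed, and $u$ satisfies \eqref{bound_strips} by Step 1; hence Theorem \ref{th:monotonicity} gives that $u$ is increasing in $x_N$. If moreover $p>\frac{2N+2}{N+2}$ and $\frac1{t^\gamma}+g(t)>0$ for all $t>0$, then $f(u(x))>0$ for every $x\in\mathbb{R}^N_+$, so $Z_{f(u)}=\emptyset$ and $Z_{f(u)}\cap Z_u=\emptyset$, and Remark \ref{rem:1} yields $\frac{\partial u}{\partial x_N}>0$ in $\mathbb{R}^N_+$. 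I expect the hard part to be the lower bound in \eqref{gradient_blowup}: the profile estimate of Step 1 only forces the \emph{average} of $\frac{\partial u}{\partial\eta}$ along boundary-to-interior segments to be of order $x_N^{\sigma-1}$, and converting this into a pointwise lower bound at the prescribed point seems to need the blow-up limit identified exactly (one-dimensional symmetry of half-space solutions of $-\Delta_p w=w^{-\gamma}$), not merely the trapping $c_1\zeta_N^\sigma\le w\le c_2\zeta_N^\sigma$; the ellipticity degeneracy of the linearized operator rules out a softer Harnack-type substitute uniform in $p>1$. The construction of the barriers of Step 1 over the unbounded half-space is a lesser technical point.
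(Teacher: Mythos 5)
Your overall architecture is the same as the paper's: a two-sided boundary estimate $c\,x_N^{\sigma}\le u\le C\,x_N^{\sigma}$ with $\sigma=\frac{p}{\gamma+p-1}$ (the paper's Lemmas \ref{lem:upper_bound} and \ref{lem:lower_bound}), a blow-up $u_t(\zeta)=t^{-\sigma}u(x_0'+t\zeta',t\zeta_N)$ with $C^{1,\alpha}$ compactness, identification of the limit as the explicit one-dimensional profile, and then Theorem \ref{th:monotonicity} / Remark \ref{rem:1} for the monotonicity statements; your Step 3 is correct and essentially what the paper does. The genuine gap is exactly where you wrote ``Granting this'': the classification of the blow-up limit $w$ (a solution of $-\Delta_p w=w^{-\gamma}$ in $\mathbb{R}^N_+$, vanishing on the boundary, trapped between $c\,\zeta_N^{\sigma}$ and $C\,\zeta_N^{\sigma}$) is the step that carries the entire lower bound in \eqref{gradient_blowup}, and you supply no argument for it. ``A sliding argument in the tangential directions'' is not routine for general $p>1$: the strong comparison machinery behind the paper's rigidity results (Theorem \ref{th:scp}, Theorems \ref{th:rigidity1}--\ref{th:rigidity2}) requires $p>\frac{2N+2}{N+2}$, so it cannot be invoked here, while Theorem \ref{th:monotonicity} only gives monotonicity in $\zeta_N$, not $1$D symmetry. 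The paper closes this step by quoting the known classification \cite[Theorem 1.2]{MR4044739}, whose hypothesis is precisely the two-sided trapping the limit satisfies; within this paper one could alternatively use that $t^{-\gamma}$ is strictly decreasing on all of $(0,+\infty)$, so Theorem \ref{th:decreasing} applies for every $p>1$ once the gradient decay \eqref{uni_gradient_zero} is secured from sublinearity via the scaling argument of Lemma \ref{lem:upper_bound_gradient2}, and then Theorem \ref{th:1D} together with the trapping excludes the linearly growing profiles $v_1$. Neither route appears in your proposal, so as written the lower estimate in \eqref{gradient_blowup} is not proved.

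A secondary, fixable flaw is in your Step 1 lower barrier. The function $\min\{a\,d(\cdot,\partial B)^{\sigma},\,aR^{\sigma}\}$ is not a subsolution: the minimum of two subsolutions is in general not a subsolution of $-\Delta_p v\le v^{-\gamma}$ (the concave kink produces a nonnegative singular measure in $-\Delta_p$), and $a\,d^{\sigma}$ alone fails near the center of $B$, where the radial term $\frac{N-1}{r}|\partial_r(a d^\sigma)|^{p-2}\partial_r(a d^\sigma)$ makes $-\Delta_p(a d^{\sigma})$ blow up while the right-hand side stays bounded. So the comparison in the ball, as written, does not go through; one needs either an interior positivity estimate first (the paper's Lemma \ref{lem:positive}, via the weak sweeping principle) or the eigenfunction barrier $s\,\phi_1^{p/(\gamma+p-1)}$ scaled on balls as in Lemma \ref{lem:lower_bound}, which is how the paper obtains the lower bound $u\ge c\,x_N^{\sigma}$ uniformly in $x'$. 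Your upper barrier on a strip and the use of the decreasing character of $t^{-\gamma}$ are in line with the paper's Lemma \ref{lem:upper_bound} and Proposition \ref{prop:wcp}, and your derivation of the upper gradient bound, of the monotonicity via Theorem \ref{th:monotonicity}, and of $\frac{\partial u}{\partial x_N}>0$ via Remark \ref{rem:1} are fine.
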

	
	The one-dimensional (1D) symmetry of solutions to problem \eqref{main}, commonly referred to as a rigidity result in the literature, has been explored in the semilinear case where $p = 2$ by Berestycki, Caffarelli, and Nirenberg \cite{MR1470317,MR1655510}, Angenent \cite{MR794096}, and Clément and Sweers \cite{MR937538}. However, this topic is not well understood in the case $p\ne2$. For $p \ne 2$, there are some results in lower dimensions under the condition that the solutions and their gradients are bounded, as seen in \cite{MR2654242} for the case $N = 2$, $p > \frac{3}{2}$, and \cite{MR3118616,MR2886112} for $N = 3$, $p > \frac{8}{5}$. In higher dimensions, Du and Guo \cite{MR2056284} have addressed the 1D symmetry of bounded positive solutions to \eqref{main}, assuming the condition that $f(t)>0$ for all $0<t<1$, $f(t)<0$ for all $t>1$ and $f(t) \ge c_0 t^{p-1}$ in $(0,\sigma)$ for some $c_0,\sigma>0$. Under this assumption, the uniqueness of bounded solutions to \eqref{main} is ensured by the method of sub-super solutions, leading to the symmetry of solutions due to the symmetry of the domain. A more general rigidity result for bounded solutions was obtained recently in \cite{2024arXiv240904804L} using a similar method. We also mention the excellent work \cite{MR4377321}, where various maximum and comparison principles were exploited with the moving plane method to show the 1D symmetry of bounded solutions to a $p$-Laplace equation in the whole space $\mathbb{R}^N$ with uniform limits. Later, these analytic tools were resorted to a more convenient sliding method to study similar problems in the whole and a half-space \cite{MR4771444,MR4635360}.	
	In this paper, we exploit such a sliding method to prove the following rigidity result.
	\begin{theorem}\label{th:rigidity1}
		Assume that $p>\frac{2N+2}{N+2}$ and $f:(0,+\infty)\to\mathbb{R}$ is a locally Lipschitz continuous function such that
		\begin{itemize}
			\item[(i)] $\liminf_{t\to0^+} \frac{f(t)}{t^{p-1}} > 0$,
			\item[(ii)] $f(t)>0$ for $t>0$,
			\item[(iii)] $f$ is strictly decreasing on $(t_0,+\infty)$ for some $t_0>0$.
		\end{itemize}
		Let $u \in C^{1,\alpha}_{\rm loc}(\mathbb{R}^N_+) \cap C(\overline{\mathbb{R}^N_+})$ be a solution to problem \eqref{main} satisfying \eqref{bound_strips} and
		\begin{equation}\label{uni_gradient_zero}
			\lim_{x_N\to+\infty} |\nabla u(x',x_N)|=0 \text{ uniformly in } x'\in\mathbb{R}^{N-1}_+.
		\end{equation}		
		Then $u$ depends only on $x_N$ and is monotone increasing in $x_N$.
	\end{theorem}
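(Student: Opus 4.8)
The plan is to combine Theorem \ref{th:monotonicity} with a sliding argument in the $x_N$-direction. First observe that hypotheses (i)--(iii) put us in the regime where Theorem \ref{th:monotonicity} applies: condition (i) forces $\lim_{t\to0^+}f(t)\ge 0$, and in fact the strict positivity of $f$ on $(0,+\infty)$ together with the limit assumption means $f$ is bounded away from $0$ near $0$ after possibly shrinking; more importantly, since $f>0$ everywhere on $(0,+\infty)$, alternative (iii) of Theorem \ref{th:monotonicity} is satisfied for all $p>1$, and (ii)--(iii) of that theorem also hold (strict decrease near $0$ is needed, which follows from the structure hypotheses one should verify, or is assumed implicitly through the ``strictly decreasing on $(0,t_0)$'' type condition; if not literally available, one invokes Proposition \ref{prop:zero_convergence} machinery). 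Hence $u$ is monotone increasing in $x_N$, and by Remark \ref{rem:1}, since $p>\frac{2N+2}{N+2}$ and $f(u)>0$ so $Z_{f(u)}=\emptyset$, we get $\frac{\partial u}{\partial x_N}>0$ throughout $\mathbb{R}^N_+$.

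Next, the key new ingredient is to show $u$ is \emph{independent} of $x'$. I would set $u_\infty(x'):=\lim_{x_N\to+\infty}u(x',x_N)$, which exists (possibly $+\infty$) by monotonicity. The assumption \eqref{uni_gradient_zero} that $|\nabla u|\to 0$ uniformly as $x_N\to+\infty$ is what controls the limit profile: it forces $u_\infty$ to be constant in $x'$ when finite, and standard barrier/comparison arguments (using $f>0$, so $-\Delta_p u > 0$) combined with the gradient decay should rule out $u_\infty\equiv+\infty$ unless... — actually the more robust route is the sliding method directly on $u$. For $\tau>0$ and $e\in\mathbb{R}^{N-1}$ a unit vector, consider the translate $u^{\tau}(x):=u(x'+\tau e, x_N)$ and compare with $u$. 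Using that both solve the same equation, the Dirichlet condition on $\partial\mathbb{R}^N_+$ (where both vanish), the monotonicity $\partial_{x_N}u>0$, and the uniform decay \eqref{uni_gradient_zero} at infinity to start the sliding, one shows via the weak comparison principles of \cite{MR3303939,MR2886112,MR4439897} (valid since $p>\frac{2N+2}{N+2}$) that $u^\tau\equiv u$ for all $\tau$, i.e. $u$ is constant along every horizontal direction.

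The technical heart — and the step I expect to be the main obstacle — is making the sliding argument start and close in the unbounded domain $\mathbb{R}^N_+$ with the singular-type nonlinearity and the degenerate operator. One must (a) establish a weak comparison principle in unbounded regions, which typically requires either boundedness of $u$ on strips (available via \eqref{bound_strips}) together with summability estimates on $|\nabla u|^{-1}$ near the critical set, as developed in \cite{MR4439897,MR3303939}, and (b) handle the ``top'' of the domain: because the domain is unbounded in $x_N$, the usual compactness that lets one conclude $u^\tau\le u$ fails, so one leverages \eqref{uni_gradient_zero} to get a limiting equation as $x_N\to\infty$ and an a priori control on $u^\tau - u$ there. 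I would treat the region $x_N$ large by the gradient decay and the region $x_N$ bounded (a strip) by the comparison principle, patching the two via a continuity/connectedness argument in the sliding parameter. Once $x'$-independence is established, the equation reduces to the ODE $-(|\varphi'|^{p-2}\varphi')' = f(\varphi)$ on $(0,+\infty)$ with $\varphi(0)=0$, $\varphi>0$, $\varphi'>0$, which is exactly the statement that $u=u(x_N)$ is monotone increasing, completing the proof.
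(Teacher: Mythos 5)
Your first step does not go through: Theorem \ref{th:monotonicity} requires $\lim_{t\to0^+}f(t)>0$ and that $f$ be strictly decreasing on $(0,t_0)$, and neither is available under the hypotheses of Theorem \ref{th:rigidity1}. Assumption (i) here only gives $f(t)\ge c\,t^{p-1}$ near zero (e.g.\ $f(t)=t^{p-1}$ is admissible, with $f(0^+)=0$), and $f$ is assumed decreasing only on $(t_0,+\infty)$; your remark that the missing strict decrease near $0$ ``follows from the structure hypotheses'' or from Proposition \ref{prop:zero_convergence} is not a proof, and in fact the paper never uses Theorem \ref{th:monotonicity} in this result. The actual role of (i) is quite different: through Lemma \ref{lem:positive} it yields the linear lower bound $u\ge\min\{Cx_N,t_0+1\}$, which is the engine of the whole argument.

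The second and more serious gap is the sliding itself. Comparing $u$ with purely horizontal translates $u^\tau(x)=u(x'+\tau e,x_N)$ has no starting point: for large $\tau$ there is no ordering between $u$ and $u^\tau$, both vanish on $\partial\mathbb{R}^N_+$, neither is bounded away from zero near the boundary, and $f$ is not monotone near $0$, so none of the comparison principles (Propositions \ref{prop:wcp}, \ref{prop:wcp2}) applies; the gradient decay \eqref{uni_gradient_zero} does not create an ordering either. The paper's route is to slide along \emph{oblique upward} directions $\nu\in\mathbb{S}^{N-1}_+$, setting $u_\lambda^\nu(x)=u(x+\lambda\nu)$: by the lower bound above, $u_\lambda^\nu\ge t_0+1$ in all of $\mathbb{R}^N_+$ once $\lambda$ is large, so the translate lives where $f$ is strictly decreasing, and \eqref{uni_gradient_zero} gives $u-u_\lambda^\nu\to0$ at infinity, so Proposition \ref{prop:wcp2} starts the sliding with $u\le u_\lambda^\nu$. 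The critical value $\lambda_0$ is then pushed to $0$ by combining the strict gap in a thin bottom strip (Lemma \ref{lem:positive}), a compactness argument plus the strong comparison principle (Theorem \ref{th:scp}, which is where $f>0$ and $p>\frac{2N+2}{N+2}$ enter) in the bounded middle region, and Proposition \ref{prop:wcp2} again above height $\lambda^*$. One-dimensional symmetry follows by letting $\nu$ tend to a horizontal direction $\zeta$ and to $-\zeta$, giving $\partial u/\partial\zeta\equiv0$; monotonicity in $x_N$ is just the case $\nu=e_N$ and is an output, not an input. Your proposal identifies the right difficulties (starting the sliding, handling the top of the domain) but does not supply the mechanism that resolves them, namely the oblique directions together with the lower bound from (i) and the decrease of $f$ on $(t_0,+\infty)$.
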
	
	Theorem \eqref{th:rigidity1} is useful for problems with positive nonlinearities. For sign-changing nonlinearities, we have the following partial result.
	\begin{theorem}\label{th:rigidity2}
		Assume that $\frac{2N+2}{N+2} < p < 2$ and $f:(0,+\infty)\to\mathbb{R}$ is a locally Lipschitz continuous function such that
		\begin{itemize}
			\item[(i)] $\liminf_{t\to0^+} \frac{f(t)}{t^{p-1}} > 0$,
			\item[(ii)] $Z_f$ is a nonempty discrete set,
			\item[(iii)] $f$ is strictly decreasing on $(t_0,+\infty)$ and $Z_f\cap(t_0,+\infty)=\emptyset$ for some $t_0>0$,
		\end{itemize}
		Let $u \in C^{1,\alpha}_{\rm loc}(\mathbb{R}^N_+) \cap C(\overline{\mathbb{R}^N_+})$ be a solution to problem \eqref{main} satisfying \eqref{bound_strips}, \eqref{uni_gradient_zero} and
		\begin{equation}\label{uni_large}
			\liminf_{x_N\to+\infty} u(x', x_N) > t_0 \text{ uniformly in } x'\in\mathbb{R}^{N-1}.
		\end{equation}
		Then $u$ depends only on $x_N$ and is monotone increasing in $x_N$.
	\end{theorem}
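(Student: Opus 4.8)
\medskip
\noindent\textbf{Proof proposal for Theorem \ref{th:rigidity2}.}
The plan is to run a sliding argument, in the spirit of \cite{MR4771444,MR4635360} and building on the maximum and comparison principles of \cite{MR4377321,MR4439897}, carrying it out first in the $x_N$-direction to obtain monotonicity and then in the horizontal directions to obtain $x'$-independence. The reason Theorem \ref{th:rigidity2} is restricted to $\tfrac{2N+2}{N+2}<p<2$ with $Z_f$ discrete is that precisely in this window one has weak and strong comparison principles for $-\Delta_p u=f(u)$ (see \cite{MR3303939,MR2886112,MR4439897}) with \emph{no} sign or monotonicity assumption on $f$, the set of critical levels being controlled by the discreteness of $Z_f$. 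A preliminary step: hypothesis (i) gives $f(t)\ge c_0 t^{p-1}$ for $0<t<\sigma$, and from this one extracts a linear lower barrier $u(x)\ge\kappa\min\{x_N,\mu\}$ in $\mathbb{R}^N_+$ for suitable $\kappa,\mu>0$ — using that $-\Delta_p v\ge c_0 v^{p-1}$ has no positive (super)solution on large balls, that $u\to0$ uniformly as $x_N\to0^+$, the weak Harnack inequality, and a comparison with a linear function in the resulting boundary strip. This barrier plays the role of the monotonicity of $f$ near $0$ assumed in Theorem \ref{th:monotonicity}, and makes the weak comparison principle usable in a boundary strip $\Sigma_{\tilde\lambda}$ despite a possible singularity of $f$ at $0$.

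\medskip\noindent\emph{Monotonicity in $x_N$.} Following the scheme of Theorem \ref{th:monotonicity}, I would prove that $u\le u_\lambda$ in $\Sigma_\lambda$ for every $\lambda>0$, where $u_\lambda(x):=u(x',2\lambda-x_N)$; this forces $u$ to be nondecreasing in $x_N$. The moving planes are started near $\lambda=0$ by the narrow–strip weak comparison principle together with the barrier above; they are continued in the bulk by the techniques of \cite{MR3303939,MR2886112,MR4439897}; and they are pushed to $\lambda\to+\infty$ by exploiting \eqref{uni_large}, which gives $u>t_0$ for large $x_N$, where $f$ is strictly decreasing (hypothesis (iii)). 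A stopping of the planes at a finite $\bar\lambda$ is ruled out by the strong comparison principle, applied away from the discrete family of critical levels associated with $Z_f$.

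\medskip\noindent\emph{One-dimensional symmetry.} Once $u$ is monotone in $x_N$, the limit $v(x'):=\lim_{x_N\to+\infty}u(x',x_N)\in(t_0,+\infty]$ exists, and integrating $\nabla_{x'}u$ along horizontal segments and using \eqref{uni_gradient_zero} shows $v$ is constant — finite everywhere or $+\infty$ everywhere. If $v\equiv M<+\infty$, then $u(\cdot,\cdot+n)\to M$ in $C^1_{\mathrm{loc}}$ with $|\nabla u(\cdot,\cdot+n)|\to0$ uniformly, so $|\nabla u|^{p-2}\nabla u\to0$ and passing to the limit in the equation yields $f(M)=0$, which is impossible since $M>t_0$ and $Z_f\cap(t_0,+\infty)=\emptyset$; hence $v\equiv+\infty$ and, by \eqref{uni_large}, $u(x',x_N)\to+\infty$ uniformly in $x'$. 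Now fix a unit $\omega\in\mathbb{R}^{N-1}$ and $\tau>0$, and set $w:=u-u(\cdot+\tau\omega,\cdot)$. Then $w=0$ on $\partial\mathbb{R}^N_+$ and, by \eqref{bound_strips} and \eqref{uni_gradient_zero}, $w\to0$ uniformly in $x'$ both as $x_N\to0^+$ and as $x_N\to+\infty$. Suppose $\Theta:=\sup_{\mathbb{R}^N_+}w>0$. Then $\Theta$ is approached along a sequence $x_k$ whose heights $(x_k)_N$ stay in a fixed compact subinterval of $(0,+\infty)$; translating by $(x_k)'$ in $x'$ and using the uniform bounds \eqref{bound_strips}, local $C^{1,\alpha}$-estimates and the uniform boundary decay, a subsequence of $u(\cdot+(x_k)',\cdot)$ converges in $C^1_{\mathrm{loc}}(\mathbb{R}^N_+)\cap C_{\mathrm{loc}}(\overline{\mathbb{R}^N_+})$ to a solution $\bar u$ of \eqref{main}, monotone in $x_N$ and tending to $+\infty$, such that $\bar w:=\bar u-\bar u(\cdot+\tau\omega,\cdot)\le\Theta$ with equality at an interior point. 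The strong comparison principle available in this range of $p$ then forces $\bar w\equiv\Theta$ near that point and, propagating across the discrete family of critical levels of $\bar u$ (whose level sets are graphs by monotonicity), on all of $\mathbb{R}^N_+$ — contradicting $\bar w\to0$ as $x_N\to0^+$. Hence $w\le0$; running the same argument with $-\omega$ gives $w\ge0$, so $u$ is independent of $x'$, and by the previous paragraph it is nondecreasing in $x_N$.

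\medskip\noindent\emph{Main obstacle.} The degeneracy of $\Delta_p$ is the heart of the difficulty. A genuine strong comparison principle for $-\Delta_p u=f(u)$ is available only under restrictions, and the decisive fact is that for $\tfrac{2N+2}{N+2}<p<2$ with $Z_f$ discrete one has it with no sign or monotonicity hypothesis on $f$; this is exactly what lets the moving-plane and sliding steps close even though $f$ need not be monotone between its first zero and $t_0$. Coordinating this with the near-boundary analysis — unavoidable here because $f$ may be singular at $0$, so $u$ is not $C^1$ up to $\partial\mathbb{R}^N_+$ and only the barrier of the first step is available there — and with the compactness of the horizontal translates on the unbounded domain $\mathbb{R}^N_+$, in particular pinning down the behaviour of the limit profile at $x_N=+\infty$ (which turns out to be $+\infty$ under these hypotheses), is where the real work concentrates; outside this range of $p$, or for a non-discrete $Z_f$, the method is not known to apply.
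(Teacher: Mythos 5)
Your plan diverges from the paper's in two places, and both divergences open genuine gaps. First, for the monotonicity step you propose the moving plane method with the reflections $u_\lambda(x)=u(x',2\lambda-x_N)$, "started near $\lambda=0$ by the narrow-strip weak comparison principle together with the barrier." Under the hypotheses of Theorem \ref{th:rigidity2} there is no tool that justifies this start: the strip comparison principle of Proposition \ref{prop:wcp} requires $f$ to be strictly decreasing on $(0,t_0)$, which is \emph{not} assumed here, and the classical narrow-strip weak comparison principle requires a uniform Lipschitz bound for $f$ on the range of $u$ and $u_\lambda$, which fails because (i) only gives $f(t)\ge c_0t^{p-1}$ near $0$ and allows $f$ to be singular and non-monotone as $t\to0^+$. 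Your linear barrier $u\ge\kappa\min\{x_N,\mu\}$ does not repair this: near the reflecting hyperplane at small heights both $u$ and $u_\lambda$ are small (by \eqref{bound_strips}), and nothing forces $\|u\|_{L^\infty(\Sigma_\lambda)}\le\kappa\lambda$, so the barrier does not separate them. This is precisely why the paper never compares $u$ with a reflection in this theorem: it slides, comparing $u$ with the translates $u^\nu_\lambda(x)=u(x+\lambda\nu)$, which by Lemma \ref{lem:positive} are bounded below by a positive constant in the boundary strip, so the "both functions small" regime never occurs; monotonicity in $x_N$ is then the special case $\nu=e_N$ and 1D symmetry follows by letting $\nu$ tend to horizontal directions.

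Second, in your horizontal sliding step the decisive claim is that, at an interior point where $\bar w=\bar u-\bar u(\cdot+\tau\omega,\cdot)$ attains its positive maximum $\Theta>0$, "the strong comparison principle forces $\bar w\equiv\Theta$ near that point." Neither Theorem \ref{th:cscp} nor Theorem \ref{th:scp} says this: they compare two solutions of the \emph{same} equation that touch, i.e. $u\le v$ with $u(x_0)=v(x_0)$, whereas here $\bar u$ touches $\bar u(\cdot+\tau\omega,\cdot)+\Theta$, which is not a solution (nor a supersolution unless $f$ is nonincreasing on the relevant range, and below $t_0$ no monotonicity of $f$ is assumed; the height of the touching point is only known to lie in a compact interval, so you cannot force $\bar u>t_0$ there). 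Even for $p=2$ the linearized equation for $\bar w$ has a zeroth-order coefficient of no sign, so an interior positive maximum cannot be excluded by the strong maximum principle. In addition, you do not address the degenerate set where $\nabla\bar u=\nabla\bar u(\cdot+\tau\omega,\cdot)=0$ together with critical values of $f$, where even the level-zero strong comparison principle fails; the paper's proof is organized exactly around these obstructions, decomposing $\mathbb{R}^N_+$ into a boundary strip (Lemma \ref{lem:positive}), a bounded-height bulk handled by a translation-compactness argument plus the tailored strong-comparison-type Lemma \ref{lem:limit_case} (Hopf's lemma and sliding balls through the discrete zero set, using \eqref{uni_large} and (iii)), the top region handled by Proposition \ref{prop:wcp2} (using \eqref{uni_gradient_zero}), and the small-gradient/near-critical set handled by Proposition \ref{prop:degenerate}. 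Without substitutes for Lemma \ref{lem:limit_case} and Proposition \ref{prop:degenerate}, and with the start of the moving planes unjustified, your argument does not close.
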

	
	As a consequence of Theorem \ref{th:rigidity1}, we have the following result for nonlinearities that change sign once.
	\begin{proposition}\label{prop:rigidity}
		Assume that $p > \frac{2N+2}{N+2}$ and $f:(0,+\infty)\to\mathbb{R}$ is a locally Lipschitz continuous function such that
		\begin{itemize}
			\item[(i)] $\liminf_{t\to0^+} \frac{f(t)}{t^{p-1}} > 0$,
			\item[(ii)] $f(t) > 0$ in $(0,t_0)$, $f(t) < 0$ in $(t_0,+\infty)$ for some $t_0>0$,
			\item[(iii)] $f$ is strictly decreasing on $(t_0-\delta,t_0]$ for some $\delta>0$.
		\end{itemize}
		Let $u \in C^{1,\alpha}_{\rm loc}(\mathbb{R}^N_+) \cap C(\overline{\mathbb{R}^N_+})$ be a bounded solution to problem \eqref{main} satisfying \eqref{bound_strips}. Then $u$ depends only on $x_N$ and is monotone increasing in $x_N$. Moreover, $0<u<t_0$ in $\mathbb{R}^N_+$ and $\lim_{x_N\to0} u(x',x_N)= t_0$ uniformly in $x'\in\mathbb{R}^{N-1}$.
		
		If we further assume
		\begin{equation}\label{bounded_cond}
			\int_{t_1}^{\infty} \left(\int_{t_1}^{s}|h(t)|dt \right)^{-\frac{1}{p}} ds < \infty
		\end{equation}
		for some $t_1\ge t_0$ and some continuous function $h$ such that $f(t) \le h(t) < 0$ for all $t\in[M,+\infty)$, then the same conclusion holds for all solutions $u \in C^{1,\alpha}_{\rm loc}(\mathbb{R}^N_+) \cap C(\overline{\mathbb{R}^N_+})$ satisfying \eqref{bound_strips}.
	\end{proposition}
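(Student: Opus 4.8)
The plan is to deduce the statement from Theorem~\ref{th:rigidity1}. The obstruction is that here $f$ is positive only on $(0,t_0)$ and strictly decreasing only on a left neighbourhood of $t_0$, whereas Theorem~\ref{th:rigidity1} requires $f>0$ on all of $(0,+\infty)$ and $f$ decreasing on some interval $(t_0,+\infty)$. The remedy is to first confine $u$ to $(0,t_0)$ and then, outside a neighbourhood of the range of $u$, replace $f$ by a function $\tilde f$ that does meet the hypotheses of Theorem~\ref{th:rigidity1}; for the final assertion one moreover removes the boundedness of $u$ by a generalized Keller--Osserman barrier extracted from \eqref{bounded_cond}.

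\emph{Step 1 ($0<u<t_0$).} By \eqref{bound_strips} there is $\lambda_1>0$ with $u<t_0$ on $\Sigma_{\lambda_1}$, so $A:=\{u>t_0\}$ satisfies $\overline A\subset\{x_N>\lambda_1\}$ and $u\equiv t_0$ on $\partial A$. On $A$ we have $-\Delta_p u=f(u)<0$ by (ii); testing the equation with $(u-t_0)^+\xi^p$ for a cutoff $\xi$ and dropping the nonpositive term with $f(u)$ yields a Caccioppoli-type inequality which, thanks to the boundedness of $u$, forces $\nabla u\equiv0$ on $A$, hence $A=\emptyset$ since $u\equiv t_0$ on $\partial A$; thus $u\le t_0$. (Alternatively, a weak comparison principle with the constant $t_0$ on the possibly unbounded set $A$, using the strict negativity of $f$ there.) For the strict inequality, note that on $\{u>t_0-\delta\}$ assumption (iii) and $f(t_0)=0$ give $f(u)\ge0$, so $w:=t_0-u\ge0$ satisfies $\Delta_p w=f(t_0-w)\le L w$ with $L$ a local Lipschitz constant of $f$ at $t_0$; since $w\not\equiv0$, the strong maximum principle gives $w>0$, i.e.\ $u<t_0$. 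Finally $\sup u<t_0$: otherwise, translating $u$ in $x'$ (and in $x_N$ if the supremum is attained only in the limit $x_N\to+\infty$) and passing to a $C^1_{\mathrm{loc}}$ limit produces an entire solution $\le t_0$ that touches $t_0$, hence $\equiv t_0$ by the argument just given --- contradicting the Dirichlet datum, respectively $u\not\equiv t_0$.

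\emph{Step 2 (reduction, conclusion, and the unbounded case).} Fix $\delta'\in(0,\delta)$ with $t_0-\delta'>\sup u$ and pick a locally Lipschitz $\tilde f:(0,+\infty)\to\mathbb{R}$ with $\tilde f=f$ on $(0,t_0-\delta']$ and $\tilde f$ positive and strictly decreasing on $(t_0-\delta',+\infty)$; this is possible since $f(t_0-\delta')>0$ and $f$ is strictly decreasing just below $t_0$. Then $u$ solves $-\Delta_p u=\tilde f(u)$ and $\tilde f$ satisfies the hypotheses of Theorem~\ref{th:rigidity1} with $t_0$ replaced by $t_0-\delta'$. One still checks \eqref{uni_gradient_zero}: using (i) one has $f(u)\ge c\,u^{p-1}$ near $\{u=0\}$, so the strictly $p$-superharmonic function $u$ stays bounded away from $0$ at large height (a Picone/Harnack argument), $\tilde f(u)$ is then locally uniformly bounded there, interior $C^{1,\alpha}$ estimates make the translates $u(\cdot+(0,h))$ precompact in $C^1_{\mathrm{loc}}$ as $h\to+\infty$, and any limit is an entire solution invariant under translations in $x_N$, whence $|\nabla u(x',x_N)|\to0$ uniformly in $x'$. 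Theorem~\ref{th:rigidity1} then gives $u=u(x_N)$ increasing, and passing to the limit $x_N\to+\infty$ in $-(|u'|^{p-2}u')'=f(u)$ shows the limit $\ell\le t_0$ satisfies $f(\ell)=0$, so $\ell=t_0$ by (ii). For the last statement, under \eqref{bounded_cond} one shows any solution satisfying \eqref{bound_strips} is bounded: near any point where $u$ is large one has $-\Delta_p u=f(u)\le h(u)<0$, and comparing $u$ on a small ball with the radial function solving the associated one-dimensional problem with datum $-|h(\cdot)|$ that blows up on the sphere --- whose existence is guaranteed precisely by the finiteness of the integral in \eqref{bounded_cond} --- gives a uniform bound via the weak comparison principle; Step~1 and the above then apply.

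\emph{Main obstacle.} The crux is Step~1: proving $u\le t_0$ when $\{u>t_0\}$ may be unbounded and $f(u)$ degenerates to $0$ along its boundary, and upgrading this to $\sup u<t_0$, which is exactly what makes the surgery on $f$ legitimate. The verification of \eqref{uni_gradient_zero} --- especially the lower bound on $u$ at large height, needed to keep the rescaled equations uniformly elliptic --- and the construction of the Keller--Osserman barrier in the unbounded case are the remaining technical points, but both follow familiar patterns.
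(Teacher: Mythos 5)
Your reduction stands or falls with the claim at the end of Step 1 that $\sup u<t_0$, and that claim is false. By (i)--(ii), for every $t_0'<t_0$ there is $c>0$ with $f(t)>c\,t^{p-1}$ on $(0,t_0')$, so Lemma \ref{lem:positive} gives $u(x)\ge\min\{Cx_N,t_0'\}$; combined with $u<t_0$ this forces $u(x',x_N)\to t_0$ uniformly as $x_N\to+\infty$ (this is precisely the limit asserted in the proposition, the ``$x_N\to0$'' there being a misprint for ``$x_N\to+\infty$''). Hence $\sup u=t_0$, no $\delta'$ with $t_0-\delta'>\sup u$ exists, and the surgery producing $\tilde f$ cannot be performed: any $\tilde f$ agreeing with $f$ on $(0,t_0)$ satisfies $\tilde f(t_0)=0$ by continuity, violating the positivity required in Theorem \ref{th:rigidity1}, while modifying $f$ on $(t_0-\delta',+\infty)$ changes the equation actually satisfied by $u$, since $u$ takes values in $(t_0-\delta',t_0)$ at large heights. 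Your argument for $\sup u<t_0$ is a non sequitur: a $C^1_{\rm loc}$ limit of translates identically equal to $t_0$ contradicts neither the Dirichlet datum (lost after translating upwards) nor $u\not\equiv t_0$; indeed for the true solutions $u=v(x_N)\nearrow t_0$ the upward translates do converge to the constant $t_0$. Because Theorem \ref{th:rigidity1} cannot be invoked verbatim, the paper instead \emph{repeats} its sliding scheme: knowing $0<u<t_0$ and $u\to t_0$ uniformly at infinity, it uses hypothesis (iii) (strict decrease of $f$ on $(t_0-\delta,t_0]$) in the weak comparison step at large heights, where $u$ and $u_\lambda^\nu$ lie in $(t_0-\delta,t_0)$ and the uniform convergence supplies condition \eqref{lim}; this adaptation is exactly what your proposal leaves without a substitute.

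Step 1 also has independent gaps. Testing with $(u-t_0)^+\xi^p$ yields at best $\int_{B_R}|\nabla(u-t_0)^+|^p\le C\|u\|_\infty^p R^{N-p}$, which does not tend to zero when $p<N$ (and here $p$ may be any value above $\frac{2N+2}{N+2}<N$), so boundedness of $u$ alone does not force $\nabla u\equiv0$ on $\{u>t_0\}$; the alternative ``weak comparison with the constant $t_0$'' on the unbounded set $\{u>t_0\}$ is likewise unavailable, since $f(u)$ degenerates to $0$ along its boundary and no monotonicity of $f$ on $(t_0,+\infty)$ is assumed, whereas the paper's comparison principles (Propositions \ref{prop:wcp}, \ref{prop:wcp2}) need $f$ strictly decreasing on the relevant range. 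The paper sidesteps all of this by quoting \cite[Proposition 2.2]{MR2056284} for $u\le t_0$ in the bounded case and \cite[Proposition 2.3]{MR2056284} for boundedness under \eqref{bounded_cond}; your Keller--Osserman barrier for the latter is the right idea, but it cannot rescue the main reduction. Finally, your strong maximum principle step for the strict inequality $u<t_0$ relies on the linear bound $f(t_0-w)\le Lw$, and for $p>2$ the function $\beta(s)=Ls$ fails V\'azquez's integral condition, so this detail too requires a different argument (the paper compares with the constant solution $t_0$ via a strong comparison principle).
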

	
	Now we consider a special case where $f$ is strictly decreasing in the whole domain. In such a case, we can remove the restriction on $p$.
	\begin{theorem}\label{th:decreasing}
		Assume that $p>1$ and $f:(0,+\infty)\to\mathbb{R}$ is a locally Lipschitz continuous function and
		\begin{itemize}
			\item[(i)] $\lim_{t\to0^+} f(t) > 0$,
			\item[(ii)] $f$ is strictly decreasing on $(0,+\infty)$.
		\end{itemize}
		Let $u \in C^{1,\alpha}_{\rm loc}(\mathbb{R}^N_+) \cap C(\overline{\mathbb{R}^N_+})$ be a solution to problem \eqref{main} with $u\in L^\infty(\Sigma_\lambda)$ for all $\lambda>0$.
		Then $u$ is monotone increasing in $x_N$.
		
		Moreover, if either \eqref{uni_gradient_zero} holds, or $\limsup_{t\to+\infty} f'(t) < 0$ and $|\nabla u| \in L^\infty(\mathbb{R}^N_+\setminus\Sigma_{\overline\lambda})$ for some $\overline\lambda>0$, then $u$ depends only on $x_N$.
	\end{theorem}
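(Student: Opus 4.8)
The plan rests on three ingredients: the criterion of Proposition~\ref{prop:zero_convergence}, which promotes the weak hypothesis $u\in L^\infty(\Sigma_\lambda)$ to the full boundary decay \eqref{bound_strips}; a \emph{clean} weak comparison principle, available because the strict monotonicity of $f$ makes $v\mapsto -\Delta_p v-f(v)$ a monotone operator on all of $(0,+\infty)$, so that neither a sign condition on $f$ nor a restriction on $p$ is needed; and a one-dimensional barrier confining the possible singular layer of $f(u)$ to an arbitrarily thin strip along $\partial\mathbb{R}^N_+$. Concretely, since $f$ is strictly decreasing on $(0,+\infty)$ it is strictly decreasing on $(0,\rho)$ with $\rho:=\|u\|_{L^\infty(\Sigma_1)}+1<+\infty$ and $\|u\|_{L^\infty(\Sigma_1)}<\rho$, so Proposition~\ref{prop:zero_convergence} gives $u(x',x_N)\to0$ uniformly in $x'$ as $x_N\to0^+$, i.e.\ \eqref{bound_strips}. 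Next, since $\lim_{t\to0^+}f(t)>0$ and $f$ is decreasing there is $\eta>0$ with $f\ge f(\eta)=:c_0>0$ on $(0,\eta]$, and by \eqref{bound_strips} there is $\sigma>0$ with $u<\eta$ in $\Sigma_\sigma$, hence $-\Delta_p u\ge c_0$ weakly in $\Sigma_\sigma$; comparing with the one-dimensional solution $\phi=\phi(x_N)$ of $-\Delta_p\phi=c_0$ on $(0,\sigma)$, $\phi(0)=\phi(\sigma)=0$ — positive on $(0,\sigma)$ and strictly increasing on $(0,\sigma/2)$ — yields $u(x)\ge\phi(x_N)$ in $\Sigma_\sigma$. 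Thus at each small fixed height $u$ has a positive lower bound, uniform in $x'$, while being uniformly small near $\partial\mathbb{R}^N_+$; more generally, if $t_\ast:=\sup\{t>0:f>0\text{ on }(0,t)\}$, the same comparison on $\Sigma_T$ with $c_0$ replaced by $f(\|u\|_{L^\infty(\Sigma_T)})>0$ gives a uniform-in-$x'$ positive lower bound on $\Sigma_{T/2}$ for every $T$ with $\|u\|_{L^\infty(\Sigma_T)}<t_\ast$.

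\textbf{Monotonicity.} Fix $0<\lambda<\sigma/4$ and set $u_\lambda(x):=u(x',2\lambda-x_N)$ on $\Sigma_\lambda$; it solves the same equation, $u=u_\lambda$ on $\{x_N=\lambda\}$, $u=0\le u_\lambda$ on $\{x_N=0\}$, and $u_\lambda\ge\phi(\lambda)>0$ on $\Sigma_\lambda$ by the barrier, whereas $u\to0$ uniformly, so $(u-u_\lambda)^+$ vanishes in a layer $\{0<x_N<\delta\}$. Its support is therefore contained in $\{\delta\le x_N\le\lambda\}$, a region on which $u$ and $u_\lambda$ are bounded away from $0$, hence $f(u),f(u_\lambda)$ are bounded and, by the interior estimates of \cite{MR709038,MR727034}, so are $\nabla u,\nabla u_\lambda$. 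The weak comparison principle then applies on $\Sigma_\lambda$: testing the difference of the two weak formulations with a cutoff of $(u-u_\lambda)^+$ (or invoking a Phragm\'en--Lindel\"of principle on the strip), using that $f(u)-f(u_\lambda)\le0$ on $\{u>u_\lambda\}$, the boundedness of $u,u_\lambda$, and the fact that the barrier keeps $\{u>u_\lambda\}$ at a positive distance from $\partial\mathbb{R}^N_+$, one reaches $\int_{\{u>u_\lambda\}\cap\Sigma_\lambda}\bigl(|\nabla u|^{p-2}\nabla u-|\nabla u_\lambda|^{p-2}\nabla u_\lambda,\nabla u-\nabla u_\lambda\bigr)\le0$, so by strict monotonicity of $\xi\mapsto|\xi|^{p-2}\xi$ one gets $u\le u_\lambda$ in $\Sigma_\lambda$, i.e.\ $u(x',s)\le u(x',t)$ whenever $0<s<t$ and $s+t<\sigma/2$. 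Continuing the moving plane as in \cite{MR3303939,MR2886112,MR3752525,MR3118616} — the barrier excising the singular layer at every step and the clean comparison removing the usual restrictions on $f$ and on $p$ — upgrades this to monotonicity of $u$ in $x_N$ on all of $(0,+\infty)$.

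\textbf{One-dimensional symmetry.} Fix a tangential unit vector $e'\in\mathbb{R}^{N-1}\times\{0\}$ and, for $\varepsilon>0$ small and $\tau>0$, set $u_{\tau,\varepsilon}(x):=u\bigl(x+\tau(e'+\varepsilon e_N)\bigr)$, which solves the same equation on $\{x_N>-\tau\varepsilon\}\supset\overline{\mathbb{R}^N_+}$ and satisfies $u_{\tau,\varepsilon}>0=u$ on $\partial\mathbb{R}^N_+$; by the barrier $u_{\tau,\varepsilon}\ge\phi(\tau\varepsilon)>0$ in a boundary layer while $u\to0$ uniformly, so $(u-u_{\tau,\varepsilon})^+$ again vanishes near $\partial\mathbb{R}^N_+$. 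Now the comparison must be run on all of $\mathbb{R}^N_+$, and this is exactly where the behaviour at $x_N=+\infty$ enters: if \eqref{uni_gradient_zero} holds then $(u-u_{\tau,\varepsilon})^+$ is uniformly small for $x_N$ large (by the mean value theorem along the segment $[x,x+\tau(e'+\varepsilon e_N)]$), which kills the remainder; if instead $\limsup_{t\to+\infty}f'(t)<0$ and $|\nabla u|\in L^\infty(\mathbb{R}^N_+\setminus\Sigma_{\overline\lambda})$, then on the set where $u$ is large the ratio $-(f(u)-f(u_{\tau,\varepsilon}))/(u-u_{\tau,\varepsilon})$ is bounded below by a positive constant, which supplies an effective zero-order term that closes the estimate there, the bounded gradient handling the intermediate region. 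Either way $u\le u_{\tau,\varepsilon}$ in $\mathbb{R}^N_+$ for all $\tau>0$ and all small $\varepsilon$; letting $\varepsilon\to0$ gives $u(x)\le u(x+\tau e')$, and applying this with $-e'$ in place of $e'$ gives equality, so $u$ is invariant under all tangential translations, i.e.\ $u=u(x_N)$.

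\textbf{Main obstacle.} The crux is the clash between the possible singularity of $f$ at $0$ — which forces both $f(u)$ and $\nabla u$ to blow up along $\partial\mathbb{R}^N_+$, so that $u\notin C^{1,\alpha}_{\rm loc}(\overline{\mathbb{R}^N_+})$ and test functions supported up to the boundary are illegitimate — and the comparison arguments. The barrier step is precisely what disarms this: it guarantees that every contact set $\{u>u_\lambda\}$, $\{u>u_{\tau,\varepsilon}\}$ keeps a positive distance from $\partial\mathbb{R}^N_+$, so that all the integrals effectively take place where $u$ is bounded away from $0$ and the standard $p$-Laplacian machinery applies. The only other delicate point, specific to the symmetry statement, is upgrading the comparison from strips to the full half-space, for which the two alternative decay assumptions at $x_N=+\infty$ are tailor-made.
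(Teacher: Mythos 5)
There is a genuine gap in the monotonicity half. Your direct comparison argument is confined to strips $\Sigma_\lambda$ with $\lambda<\sigma/4$, because your barrier lower bound for $u$ is only justified where $f(u)$ admits a positive constant lower bound: your extension to a general height $T$ explicitly requires $\|u\|_{L^\infty(\Sigma_T)}<t_\ast$, i.e.\ that $u$ stays below the (possible) zero of $f$ on the whole strip. Nothing in the hypotheses guarantees this: $f$ is strictly decreasing with $\lim_{t\to0^+}f(t)>0$, so it may change sign, and $u$ may well exceed its zero at moderate heights. Without a positive lower bound for $u$ at \emph{every} fixed height, the reflected function $u_\lambda$ is not known to be bounded away from $0$ on $\Sigma_\lambda$ for large $\lambda$, and your comparison cannot be run there. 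Your fallback, ``continuing the moving plane as in \cite{MR3303939,MR2886112,MR3752525,MR3118616}'', does not close this: those continuation arguments are proved under restrictions on $p$ (e.g.\ $\frac{2N+2}{N+2}<p<2$, or $p>2$ with structural conditions) and on the sign of $f$, which are exactly what Theorem \ref{th:decreasing} dispenses with; invoking them for arbitrary $p>1$ and sign-changing $f$ is unjustified. The paper avoids all of this via Lemma \ref{lem:lower_bound} with $\gamma=0$: the interior sub-solutions built from the first eigenfunction are capped at the level $t_0$ where $f>c_0$, so on the contact set one automatically has $u<t_0$ and $f(u)\ge c_0$, and the resulting bound $u\ge\min\{Cx_N^{p/(p-1)},t_0\}$ holds at all heights regardless of the sign of $f$ along the range of $u$. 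With that bound, Proposition \ref{prop:wcp} applies on \emph{every} strip $\Sigma_\lambda$ at once (since $f$ is strictly decreasing on all of $(0,+\infty)$, the ``$t_0$'' there can be taken as $\|u\|_{L^\infty(\Sigma_\lambda)}+1$), so no moving-plane continuation is needed at all — this is the whole point of the theorem, and it is the step your proposal is missing. The same global-height lower bound is also what makes the translate $u_{\tau,\varepsilon}$ uniformly positive in your symmetry step, so the gap propagates there too.

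Apart from this, your one-dimensional symmetry argument is essentially the paper's: you compare $u$ with its translates in directions $e'+\varepsilon e_N$, close the comparison at infinity either through \eqref{uni_gradient_zero} or through $\limsup_{t\to+\infty}f'(t)<0$ plus the gradient bound (these are exactly the paper's Propositions \ref{prop:wcp2} and \ref{prop:wcp3}, resting on Lemma \ref{lem:k}), and then let the direction degenerate to a tangential one — you do this at the level of translations, the paper at the level of directional derivatives, which is an immaterial difference. The technical execution of the half-space comparisons is only sketched (``kills the remainder'', ``closes the estimate''), but the structure matches the paper; also note that Proposition \ref{prop:zero_convergence}, which you invoke at the outset, is not actually needed, since $u\le u_\lambda$ on $\{x_N=0\}$ already follows pointwise from $u\in C(\overline{\mathbb{R}^N_+})$ and $u=0$ on $\partial\mathbb{R}^N_+$.
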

	Lastly, we focus our attention on problem \eqref{singular} with $g\equiv0$ and $\gamma>0$. This problem is of particular interest due to its application in blow-up analysis (see \cite{MR4044739}). For this problem, we have the following classification result for $1<p<N$:
	\begin{theorem}\label{th:pure}
		Assume $1<p< N$. Let $\gamma>1$ and let $u \in C^{1,\alpha}_{\rm loc}(\mathbb{R}^N_+) \cap C(\overline{\mathbb{R}^N_+})$ be a solution to the problem
		\begin{equation}\label{pure}
			\begin{cases}
				-\Delta_p u = \dfrac{1}{u^\gamma} & \text{ in } \mathbb{R}^N_+,\\
				u>0 & \text{ in } \mathbb{R}^N_+,\\
				u=0 & \text{ on } \partial\mathbb{R}^N_+
			\end{cases}
		\end{equation}
		with
		\begin{equation}\label{bound_strip}
			u\in L^\infty(\Sigma_{\overline\lambda}) \text{ for some } \overline\lambda>0.
		\end{equation}
		Then $u$ is monotone increasing in $x_N$ and
		\begin{equation}\label{asymptotic}
			\limsup_{x_N\to+\infty} \frac{u(x',x_N)}{x_N}<+\infty \text{ uniformly in } x'\in\mathbb{R}^{N-1}_+.
		\end{equation}
		If we further assume that $u$ is sublinear in the sense that	
		\begin{equation}\label{sublinear}
			\lim_{x_N\to+\infty} \frac{u(x',x_N)}{x_N}=0 \text{ uniformly in } x'\in\mathbb{R}^{N-1}_+,
		\end{equation}
		then
		\[
		u(x) \equiv \left[\frac{(\gamma+p-1)^p}{p^{p-1}(p-1)(\gamma-1)}\right]^\frac{1}{\gamma+p-1}	x_N^\frac{p}{\gamma+p-1}.
		\]
		
		If else $0<\gamma<1$, then \eqref{pure} has no solution $u \in C^{1,\alpha}_{\rm loc}(\mathbb{R}^N_+) \cap C(\overline{\mathbb{R}^N_+})$ satisfying \eqref{bound_strip}.
	\end{theorem}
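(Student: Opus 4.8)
The plan is to separate the cases $\gamma>1$ and $0<\gamma<1$; in both the engine is comparison with the explicit one‑dimensional profile, and the real crux is upgrading the hypothesis \eqref{bound_strip} (boundedness on \emph{one} strip) to boundedness on \emph{all} strips. Throughout, set $\beta:=p/(\gamma+p-1)$ and $v(x_N):=C_\gamma x_N^{\beta}$ with $C_\gamma$ as in the statement; a one–line ODE computation shows that $v$ is the unique positive solution of $-(|v'|^{p-2}v')'=v^{-\gamma}$ on $(0,\infty)$ with $v(0)=0$, and that $\beta\in(0,1)$ iff $\gamma>1$. Since $t\mapsto t^{-\gamma}$ is strictly decreasing on $(0,\rho)$ for every $\rho>0$, taking $\rho>\|u\|_{L^\infty(\Sigma_{\overline\lambda})}$ and applying Proposition~\ref{prop:zero_convergence} gives, in all cases, that $u(x',x_N)\to0$ uniformly in $x'$ as $x_N\to0^+$. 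Assume first $\gamma>1$. Pick $\mu\in(0,\overline\lambda]$ so small that $u<v(\mu)$ on $\{x_N=\mu\}$. As $u$ and $v$ solve the same equation and $u\le v$ on $\partial\Sigma_\mu$, a weak comparison principle for $-\Delta_p$ with the \emph{decreasing} nonlinearity $t^{-\gamma}$ — applied on the finite‑height strip $\Sigma_\mu$ (where $u$ is bounded), with the care required because $|\nabla v|$ blows up on $\{x_N=0\}$, in the spirit of the singular comparison principles of \cite{MR4753083,2024arXiv240403343M} — yields $u\le v$ in $\Sigma_\mu$; comparing in the same way with the subsolutions $\underline C\,x_N^{\beta}$ ($\underline C<C_\gamma$) and with the shifted profiles $C_\gamma(x_N-a)_+^{\beta}$, and invoking the strong maximum principle, gives a matching lower bound $c_1 x_N^{\beta}\le u\le c_2 x_N^{\beta}$ in $\Sigma_\mu$, uniformly in $x'$, while a moving–plane argument in the $x_N$‑direction confined to $\Sigma_\mu$ (again built on the same comparison principle on finite‑height strips) shows $u$ is increasing in $x_N$ on $(0,\mu)$.

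The main point is then to prove $u\in L^\infty(\Sigma_\lambda)$ for every $\lambda>0$, and in fact $u(x',x_N)\le C x_N^{\beta}$ globally (which forces \eqref{asymptotic}, since $\beta<1$). I would obtain this from the \emph{scale invariance} $u\mapsto u_R:=R^{-\beta}u(R\,\cdot)$ of \eqref{pure} by a doubling/blow‑down argument: if $u$ grew faster than $x_N^{\beta}$ along a sequence with heights $u(x_k)=:M_k\to\infty$, then, choosing $x_k$ by a doubling lemma so that $u$ stays of order $M_k$ on the intrinsic ball of radius $L_k:=M_k^{(\gamma+p-1)/p}\to\infty$ about $x_k$, the rescalings $M_k^{-1}u(x_k+L_k\,\cdot)$ again solve $-\Delta_p w=w^{-\gamma}$, are locally bounded, and, by the local $C^{1,\alpha}$ estimates, converge along a subsequence either to a bounded entire solution of $-\Delta_p w=w^{-\gamma}$ in $\mathbb{R}^N$ or to a bounded solution in a half‑space vanishing on its boundary; in either case integrating the equation over large balls forces $w$ to grow at least like $\mathrm{dist}^{\,p/(p-1)}$, a contradiction. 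This step — especially the bookkeeping of the boundary flux near $\{x_N=0\}$, which is delicate when $p>2$ and $\gamma$ is large — is where I expect the real work to be. Once boundedness on all strips is available, Theorem~\ref{th:monotonicity2} with $g\equiv0$ (recall $t^{-\gamma}>0$) gives that $u$ is increasing in $x_N$ on all of $\mathbb{R}^N_+$ and yields \eqref{gradient_blowup}; combined with $u\le C x_N^{\beta}$ this gives \eqref{asymptotic}, and it also gives $u\ge c_1\mu^\beta>0$ on $\{x_N\ge\mu\}$, so that $-\Delta_p u=u^{-\gamma}$ is bounded away from $\partial\mathbb{R}^N_+$.

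Under the additional hypothesis \eqref{sublinear}, the interior gradient estimates (using that $-\Delta_p u$ is bounded away from the boundary) give $\nabla u(x',x_N)\to0$ uniformly in $x'$ as $x_N\to\infty$, i.e.\ \eqref{uni_gradient_zero}. I would then show $u$ depends only on $x_N$ by the sliding method: for $p>\frac{2N+2}{N+2}$ this is the argument underlying Theorems~\ref{th:rigidity1}--\ref{th:rigidity2}, which exploit exactly the positivity of the nonlinearity and the asymptotic flatness now at hand; for the remaining $p\in(1,\frac{2N+2}{N+2}]$ I would instead compare $u$ globally with $(1\pm\epsilon)v$ — strict super/subsolutions of \eqref{pure} for the decreasing nonlinearity — the sublinearity \eqref{sublinear} being precisely what lets the comparison close as $\epsilon\to0$ (absorbing the cut‑off error at infinity through a weighted, Picone‑type energy). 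A one‑dimensional solution of \eqref{pure} with zero boundary value must equal $v$, which is the displayed formula.

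Finally, suppose $0<\gamma<1$, so $\beta>1$. The one‑dimensional initial value problems $-(|\phi'|^{p-2}\phi')'=\phi^{-\gamma}$, $\phi(0)=0$, $\phi'(0^+)=a>0$ have solutions $\phi_a$ that are positive only on a bounded interval $(0,T_a)$ with $\phi_a(T_a)=0$, and by the scaling $\phi_{R^{1-\beta}a}(t)=R^{-\beta}\phi_a(Rt)$ one has $T_a\to\infty$ as $a\to\infty$; moreover, since $\phi_a'(0^+)=a$ and $(\phi_a')^{p-1}$ is decreasing, a bootstrap shows $\phi_a(t)\ge\tfrac{a}{2}t$ on an interval $(0,\delta_a)$ with $\delta_a\to\infty$ as $a\to\infty$. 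The upgrade of the previous step applies verbatim (it used only $-\Delta_p u\ge0$ and the scale invariance) and gives $u\in L^\infty(\Sigma_\lambda)$ for all $\lambda$; comparing $u$ from below with $\phi_a$ on $\Sigma_{T_a}$ via the weak comparison principle (valid since both solve the same equation, $\phi_a=0$ on $\partial\Sigma_{T_a}$, and $u$ is bounded there) gives $u(x',t)\ge\phi_a(t)\ge\tfrac{a}{2}t$ for all $t<\min(\delta_a,T_a)$, uniformly in $x'$. Letting $a\to\infty$ with $t$ fixed forces $u(x',t)=+\infty$, a contradiction, so \eqref{pure} has no solution satisfying \eqref{bound_strip}. (Equivalently: a solution would be monotone in $x_N$ and, after reduction to one dimension, would produce a positive solution of the 1D ODE on all of $(0,\infty)$ — which does not exist, as such solutions vanish at a finite point.)
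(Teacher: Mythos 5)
Your proposal breaks down at its central step. You claim that every solution of \eqref{pure} satisfying \eqref{bound_strip} obeys the global bound $u(x)\le C x_N^{\beta}$ with $\beta=\frac{p}{\gamma+p-1}<1$, and you propose to obtain it by a doubling/blow-down argument. This bound is false: the one-dimensional solutions $v_M$ with $M>0$ classified in Theorem \ref{th:1D} (equivalently, the functions $\lambda^{-\frac{p}{\gamma+p-1}}v_1(\lambda x_N)$) solve \eqref{pure}, satisfy \eqref{bound_strip}, and grow \emph{linearly} in $x_N$, since $v_M'(t)\to\bigl(\frac{Mp}{p-1}\bigr)^{1/p}>0$. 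This is exactly why the paper states that \eqref{sublinear} is sharp. Consequently no blow-down argument can yield $u\le Cx_N^{\beta}$, and indeed your rescalings $M_k^{-1}u(x_k+L_k\cdot)$ applied to $u=v_M$ are not locally bounded (the variation of $v_M$ over a ball of radius $L_k$ is of order $L_k\gg M_k$), so the doubling step cannot produce the limiting profile you describe. Since all of your later steps (boundedness on every strip, \eqref{asymptotic}, the applicability of Theorem \ref{th:monotonicity2}, and the positive lower bound away from the boundary) are fed by this false estimate, the proof has a genuine gap at the point you yourself identified as ``the real work''. What is actually true, and what the paper proves, is only the \emph{linear} upper bound $u\le Cx_N$ away from $\Sigma_{\overline\lambda}$ (Lemma \ref{lem:upper_bound_infty}), obtained via the Harnack inequality and comparison with the fundamental solution of the $p$-Laplacian; this is precisely where the hypothesis $1<p<N$ (or $p=N$ with the logarithmic solution) enters. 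Your proposal never uses $p<N$ anywhere, which is a telling sign that the growth estimate is not being obtained correctly. The same objection applies to the case $0<\gamma<1$, where you invoke ``the previous step verbatim'' to get boundedness on all strips; the paper instead concludes immediately by noting that Lemma \ref{lem:lower_bound} gives a superlinear lower bound $u\gtrsim x_N^{\frac{p}{\gamma+p-1}}$ contradicting the linear bound \eqref{asymptotic}.

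Two further, more minor points. For the rigidity step under \eqref{sublinear} you split according to whether $p>\frac{2N+2}{N+2}$, invoking Theorems \ref{th:rigidity1}--\ref{th:rigidity2} and, for small $p$, an unproven Picone-type comparison with $(1\pm\epsilon)v$; this is unnecessary and incomplete as written. Since $t\mapsto t^{-\gamma}$ is strictly decreasing on all of $(0,+\infty)$, the paper's Theorem \ref{th:decreasing} applies for every $p>1$ once \eqref{uni_gradient_zero} is established (which follows from \eqref{sublinear} by the rescaled gradient estimate of Lemma \ref{lem:upper_bound_gradient2}), and then Theorem \ref{th:1D} together with \eqref{sublinear} selects the pure power profile; note also that ``a 1D solution must equal $v_0$'' is not automatic but uses \eqref{sublinear} to exclude the $v_M$, $M>0$. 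Your use of Proposition \ref{prop:zero_convergence} for the boundary behavior and your 1D sweeping argument for $0<\gamma<1$ are reasonable in spirit, but the latter still relies on strip bounds you have not legitimately obtained and glosses over the gradient hypotheses needed to run the weak comparison principle near $\{x_N=0\}$.
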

	To prove Theorem \ref{th:pure}, we extend some estimates in \cite{MR4753083} from $p=2$ to $p\ne2$, then we apply Theorem \ref{th:decreasing}.
	Theorem \ref{th:pure} improves a classification result in \cite[Theorem 1.2]{MR4044739}, where the exact asymptotic behavior
	\[
	c x_N^\frac{p}{\gamma+p-1} \le u(x) \le C x_N^\frac{p}{\gamma+p-1} \quad\text{ in } \mathbb{R}^N_+
	\]
	is assumed instead of \eqref{sublinear}. We stress that assumption \eqref{sublinear} is sharp in the sense that solutions which do not satisfy \eqref{sublinear} do exist (see Theorem \ref{th:1D} below). However, we cannot classify all such solutions without a priori assumption \eqref{sublinear}. We recall that all solutions to problem \eqref{pure} when $p=2$ were classified in \cite{MR4753083} without assumption \eqref{sublinear} and in \cite{2024arXiv240403343M} without also \eqref{bound_strip}. The key tools available in the case $p=2$ to study such a problem are a maximum principle for bounded solutions in unbounded domains (see \cite[Lemma 2.1]{MR1470317}) and the Kelvin transform. It seems not to be easy to extend such a result to the case $p\ne2$ due to the lack of the Kelvin transform for the $p$-Laplacian and the nonlinear nature of this operator. Nevertheless, such a transform is available for the $N$-Laplacian besides the Laplacian one. For this reason, we can utilize the Kelvin transform to classify all solutions to \eqref{pure} with $p=N$ without assumptions \eqref{bound_strip} and \eqref{sublinear}.
	\begin{theorem}\label{th:pure_N}
		Let $\gamma>1$ and let $u \in C^{1,\alpha}_{\rm loc}(\mathbb{R}^N_+) \cap C(\overline{\mathbb{R}^N_+})$ be a solution to the problem		
		\begin{equation}\label{pure_N}
			\begin{cases}
				-\Delta_N u = \dfrac{1}{u^\gamma} & \text{ in } \mathbb{R}^N_+,\\
				u>0 & \text{ in } \mathbb{R}^N_+,\\
				u=0 & \text{ on } \partial\mathbb{R}^N_+
			\end{cases}
		\end{equation}
		Then either $u(x)\equiv v_0(x_N)$ or $u(x)\equiv \lambda^{-\frac{N}{N+\gamma-1}} v_1(\lambda x_N)$ for some $\lambda>0$, where
		\[
		v_0(t) := \left[\frac{(N+\gamma-1)^N}{N^{N-1}(N-1)(\gamma-1)}\right]^\frac{1}{N+\gamma-1}	t^\frac{N}{N+\gamma-1}
		\]
		and $v_1$ is uniquely determined by
		\[	
		\int_{0}^{v_1(t)} \left(1 + \frac{s^{1-\gamma}}{\gamma-1}\right)^{-\frac{1}{N}} ds = \left(\frac{N}{N-1}\right)^\frac{1}{N}t \quad\text{ for all } t\ge0.
		\]
		
		If else $0<\gamma\le1$, then \eqref{pure} has no solution in $C^{1,\alpha}_{\rm loc}(\mathbb{R}^N_+) \cap C(\overline{\mathbb{R}^N_+})$.
	\end{theorem}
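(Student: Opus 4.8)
\emph{Overview.} The plan is to exploit the conformal invariance of the $N$-Laplacian — for which the Kelvin transform carries no weight, since the exponent $(N-p)/(p-1)$ vanishes when $p=N$ — to transplant the growth of $u$ at infinity into a point singularity at a boundary point, to run the moving plane method on the transplanted problem so as to force one-dimensional symmetry, and finally to reduce \eqref{pure_N} to the ODE analysis that is implicit in Theorem~\ref{th:pure}.

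\emph{Step 1 (boundary behaviour and the Kelvin transform).} Since $u\in C(\overline{\mathbb R^N_+})$ vanishes on $\partial\mathbb R^N_+$ and solves $-\Delta_N u=u^{-\gamma}$, the barrier $x\mapsto A\,x_N^{N/(N+\gamma-1)}$ together with the local boundary estimates for singular $p$-Laplace problems (cf.\ \cite{MR4044739,MR1037213}) yields, for each $\xi\in\partial\mathbb R^N_+$, a constant $C_\xi>0$ with $0<u(y)\le C_\xi\,|y-\xi|^{N/(N+\gamma-1)}$ for $y\in(B_1(\xi)\setminus\{\xi\})\cap\overline{\mathbb R^N_+}$ (a logarithmic correction of the barrier is needed for $\gamma=1$, and for $0<\gamma<1$ the exponent is still positive). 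Setting $\tilde u_\xi(x):=u\big(\xi+(x-\xi)/|x-\xi|^2\big)$, the inversion maps $\mathbb R^N_+$ onto itself and $\tilde u_\xi$ is a positive weak solution of
\[
-\Delta_N\tilde u_\xi=\frac{|x-\xi|^{-2N}}{\tilde u_\xi^{\,\gamma}}\quad\text{in }\mathbb R^N_+,\qquad \tilde u_\xi=0\ \text{on }\partial\mathbb R^N_+\setminus\{\xi\},
\]
which by Step~1 satisfies $\tilde u_\xi(x)\le C_\xi\,|x-\xi|^{-N/(N+\gamma-1)}\to0$ as $|x|\to\infty$; the only possible singularity of $\tilde u_\xi$ is at $\xi$.

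\emph{Step 2 (moving planes and $1$D reduction).} Fix $i\in\{1,\dots,N-1\}$ and, after translating, take $\xi=0$. I would run the moving plane method for $\tilde u_\xi$ in the $e_i$-direction, sliding the hyperplanes $\{x_i=\mu\}$ down from $\mu=+\infty$. The uniform decay at infinity lets the procedure start; for every $\mu>0$ each point $x$ of the cap $\{x_i<\mu\}\cap\mathbb R^N_+$ is at least as close to $0$ as its reflection $x_\mu$, so $|x|^{-2N}\ge|x_\mu|^{-2N}$ there, and together with the monotonicity of $t\mapsto t^{-\gamma}$ this makes the weak comparison inequality close in the favourable direction. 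Using the weak comparison, strong maximum and Hopf-type principles for $-\Delta_N$ (available since $N\ge2>\tfrac{2N}{N+1}$; I would adapt those of \cite{MR3303939,MR2886112,MR4439897} and the singular-coefficient versions in \cite{MR4753083}), the procedure reaches $\mu=0$, and the invariance of the problem under $x_i\mapsto-x_i$ gives that $\tilde u_\xi$ is symmetric about $\{x_i=0\}$. Since the Kelvin transform about $\xi$ commutes with reflection across any hyperplane through $\xi$, it follows that $u$ is symmetric about $\{x_i=\xi_i\}$; letting $\xi$ vary over $\partial\mathbb R^N_+$ and $i$ over $\{1,\dots,N-1\}$ forces $u=u(x_N)$.

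\emph{Step 3 (the ODE).} Now $u$ solves $-\big(|u'|^{N-2}u'\big)'=u^{-\gamma}$ on $(0,\infty)$ with $u(0)=0$, $u>0$; since $|u'|^{N-2}u'$ is decreasing one checks $u'>0$, and a first integration gives $(u')^{N}=\frac{N}{N-1}\big(\frac{u^{1-\gamma}}{\gamma-1}+C\big)$ for some $C$ (with $-\ln u$ replacing $\frac{u^{1-\gamma}}{\gamma-1}$ when $\gamma=1$). For $\gamma>1$ the right-hand side is nonnegative throughout iff $C\ge0$: $C=0$ integrates to $v_0$, while the scaling $u\mapsto\mu^{-N/(N+\gamma-1)}u(\mu\,\cdot)$ multiplies $C$ by $\mu^{N(\gamma-1)/(N+\gamma-1)}$, an exponent that is positive, so every $C>0$ solution equals $\lambda^{-N/(N+\gamma-1)}v_1(\lambda x_N)$ for a unique $\lambda>0$, where $v_1$ is the normalized ($C=1$) profile — which is precisely the stated integral identity. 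For $0<\gamma\le1$ the bracket is negative near $u=0$ when $C\le0$ and tends to $-\infty$ as $u\to\infty$ when $C>0$; in either case $u'$ vanishes at a finite value of $u$, after which $\big(|u'|^{N-2}u'\big)'=-u^{-\gamma}<0$ prevents $u$ from staying positive on all of $(0,\infty)$, so \eqref{pure_N} has no solution.

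\emph{Main obstacle.} The hard part is Step~2: conformal invariance does deliver a clean transformed equation, but it is \emph{doubly} singular — a negative power of $\tilde u_\xi$ times a spatial weight $|x-\xi|^{-2N}$ that blows up at the boundary point $\xi$, which lies inside the moving caps. Making the moving plane method rigorous therefore requires the full strength of the weak comparison, strong maximum and Hopf principles for the $N$-Laplacian with such singularities, together with a careful excision-and-limit argument: one compares on $\big(\{x_i<\mu\}\cap\mathbb R^N_+\big)\setminus B_\varepsilon(\xi)$, controls $\tilde u_\xi$ on $\partial B_\varepsilon(\xi)\cap\mathbb R^N_+$ (only a lower bound is used there, so the behaviour of $u$ near infinity is irrelevant), and lets $\varepsilon\to0$.
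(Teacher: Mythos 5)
Your overall route is the same as the paper's: use the conformal invariance of $\Delta_N$ to Kelvin-transform $u$ about a boundary point, deduce symmetry of the transformed function about every hyperplane through that point, vary the point over $\partial\mathbb{R}^N_+$ to conclude $u=u(x_N)$, and finish with the one-dimensional analysis (your Step 3 is essentially the content of Theorem \ref{th:1D}, including the first integral, the scaling that normalizes the constant, and the nonexistence for $0<\gamma\le1$). Step 1 is also fine, and in fact lighter than you make it: the quantitative barrier bound near $\xi$ is not needed, since continuity of $u$ at $\xi$ with $u(\xi)=0$ already gives $\hat u(x)\to0$ as $|x|\to\infty$, which is all the decay that gets used.

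The genuine gap is exactly the step you flag as the main obstacle, and your proposed way around it does not close. First, "uniform decay at infinity lets the procedure start" is not a proof: for $\mu$ large both $\hat u$ and its reflection decay in the relevant region, so decay alone gives no starting inequality, and the continuation via strong maximum and Hopf principles for the doubly singular equation is precisely the machinery you have not supplied. Second, the excision argument is flawed as described: on $\partial B_\varepsilon(\xi)\cap\mathbb{R}^N_+$ you would need a pointwise comparison between $\hat u$ and its reflection, but near the circle where this sphere meets $\partial\mathbb{R}^N_+$ both functions tend to zero, so "only a lower bound for $\hat u$" cannot give the needed inequality there; a singularity of $\hat u$ at $\xi$ helps only in the interior of the sphere, not near its intersection with the boundary hyperplane. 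The paper avoids all of this with a single, one-shot weak comparison valid for every position of the hyperplane simultaneously: for each $\lambda<0$ one tests the two equations with $w=(\hat u-\hat u_\lambda-\varepsilon)^+\chi_{\hat\Sigma_\lambda}$, whose support is automatically compactly contained in $\mathbb{R}^N_+$ because $\hat u\to0$ at infinity and $\hat u$ is continuous with value $0$ at the reflected singular point $0_\lambda\in\partial\mathbb{R}^N_+\setminus\{0\}$ (so $\hat u<\varepsilon$ there, killing $w$ near $0_\lambda$ no matter how badly $\hat u_\lambda$ behaves); then $|x|\ge|x_\lambda|$ in the cap, the monotonicity of $t\mapsto t^{-\gamma}$ and inequality \eqref{p_ge} force $w\equiv0$, and one lets $\varepsilon\to0$ and then $\lambda\to0^-$. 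No sliding, no strong comparison principle, no Hopf lemma, and no excision are needed; if you replace your Step 2 by this $\varepsilon$-shifted test-function argument (in either cap orientation — yours is just the mirror of the paper's), the rest of your outline goes through.
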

	
	Moreover, we can classify all solutions to \eqref{pure_N} for all $p>1$ in dimension one.
	\begin{theorem}\label{th:1D}
		Assume $p>1$. Let $\gamma>1$ and let $u \in C^{1,\alpha}_{\rm loc}(\mathbb{R}_+) \cap C(\overline{\mathbb{R}_+})$ be a solution to the problem
		\begin{equation}\label{1D}
			\begin{cases}
				-(|v'|^{p-2} v')' = \dfrac{1}{v^\gamma} & \text{ in } \mathbb{R}_+,\\
				v(t)>0 & \text{ in } \mathbb{R}_+,\\
				v(0)=0.
			\end{cases}
		\end{equation}
		Then either
		\[
		v(t) \equiv v_0(t) := \left[\frac{(\gamma+p-1)^p}{p^{p-1}(p-1)(\gamma-1)}\right]^\frac{1}{\gamma+p-1}	t^\frac{p}{\gamma+p-1}
		\]
		or
		\[
		v(t) \equiv \lambda^{-\frac{p}{\gamma+p-1}} v_1(\lambda t) \quad\text{ for some } \lambda>0,
		\]
		where $v_1$ is uniquely determined by
		\[	
		\int_{0}^{v_1(t)} \left(1 + \frac{s^{1-\gamma}}{\gamma-1}\right)^{-\frac{1}{p}} ds = \left(\frac{p}{p-1}\right)^\frac{1}{p}t \quad\text{ for all } t\ge0.
		\]
		
		If else $0<\gamma\le1$, then \eqref{1D} has no solution in $C^{1,\alpha}_{\rm loc}(\mathbb{R}_+) \cap C(\overline{\mathbb{R}_+})$.
	\end{theorem}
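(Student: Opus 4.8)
The plan is to treat \eqref{1D} as an ordinary differential equation and avoid the PDE machinery (moving planes, sliding, Kelvin transforms) used in the higher-dimensional statements. Two ingredients will do the job: a soft step showing that every admissible $v$ is strictly increasing and unbounded, and a first-order energy identity obtained by multiplying the equation by $v'$, which is rigid enough to determine $v$ completely when $\gamma>1$ and to rule out solutions when $0<\gamma\le1$. (Alternatively, the monotonicity follows from Theorem \ref{th:decreasing} applied to $f(t)=t^{-\gamma}$, which is strictly decreasing on $(0,+\infty)$ with $\lim_{t\to0^+}f(t)=+\infty>0$ and is automatically bounded on every strip of $\mathbb{R}_+$; but in dimension one the direct argument is shorter.)

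First I would set $\phi:=|v'|^{p-2}v'$, so that \eqref{1D} becomes $\phi'=-v^{-\gamma}<0$ on $(0,+\infty)$; hence $\phi$ is strictly decreasing. If $\phi(t_1)\le0$ for some $t_1$, then $\phi<0$ --- hence $v'<0$ --- on $(t_1,+\infty)$, so $v$ is decreasing there with $v\le v(t_1)$; then $\phi'=-v^{-\gamma}\le-v(t_1)^{-\gamma}<0$, which forces $\phi\to-\infty$, hence $v'\to-\infty$ and $v\to-\infty$, contradicting $v>0$. So $v'>0$ throughout, $v$ is strictly increasing, and since $v'=\phi^{1/(p-1)}$ with $\phi\in C^1$ and $\phi>0$ we get $v\in C^2(0,+\infty)$. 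The same estimate shows $v(t)\to+\infty$: if $L:=\lim_{t\to+\infty}v(t)<\infty$, then $v^{-\gamma}\ge L^{-\gamma}>0$ and $\phi\to-\infty$ again, which is impossible.

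Next I would multiply \eqref{1D} by $v'>0$; with $\Psi:=\tfrac{p-1}{p}(v')^{p}$ one gets $\Psi'=-v^{-\gamma}v'$, which integrates to $\tfrac{p-1}{p}(v')^p=\tfrac{v^{1-\gamma}}{\gamma-1}+C$ if $\gamma>1$, to $\tfrac{p-1}{p}(v')^p+\ln v=C$ if $\gamma=1$, and to $\tfrac{p-1}{p}(v')^p+\tfrac{v^{1-\gamma}}{1-\gamma}=C$ if $0<\gamma<1$, for a constant $C$. If $0<\gamma\le1$, letting $t\to+\infty$ makes $\ln v$, respectively $v^{1-\gamma}/(1-\gamma)$, blow up, forcing $(v')^p\to-\infty$: absurd, so \eqref{1D} has no solution. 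If $\gamma>1$, letting $t\to+\infty$ gives $v^{1-\gamma}\to0$, hence $(v')^p\to\tfrac{p}{p-1}C$ and $C\ge0$. If $C=0$ then $v'=\left(\tfrac{p}{(p-1)(\gamma-1)}\right)^{1/p}v^{(1-\gamma)/p}$, so $\tfrac{d}{dt}v^{(\gamma+p-1)/p}$ is a positive constant; integrating and letting the lower endpoint tend to $0$ (using $v(0)=0$) gives, after tidying the constant, $v\equiv v_0$. If $C>0$ I would use the scaling invariance of \eqref{1D}: $v\mapsto\mu^{-p/(\gamma+p-1)}v(\mu\,\cdot\,)$ maps solutions to solutions and rescales the energy constant by $\mu^{(\gamma-1)p/(\gamma+p-1)}$, so a suitable $\mu$ normalizes $C$ to $1$; the normalized solution then satisfies $v'=\left(\tfrac{p}{p-1}\right)^{1/p}\left(1+\tfrac{v^{1-\gamma}}{\gamma-1}\right)^{1/p}$, and separating and integrating from $0$ --- the integrand $\left(1+\tfrac{s^{1-\gamma}}{\gamma-1}\right)^{-1/p}$ is comparable to $(\gamma-1)^{1/p}s^{(\gamma-1)/p}$ near $0$ (integrable) and tends to $1$ at $\infty$, so $r\mapsto\int_0^r(\cdots)^{-1/p}\,ds$ is a continuous increasing bijection of $[0,+\infty)$ --- reproduces exactly the relation defining $v_1$. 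Undoing the scaling yields $v\equiv\lambda^{-p/(\gamma+p-1)}v_1(\lambda\,\cdot\,)$ for the corresponding $\lambda>0$. That both $v_0$ and these rescalings are indeed solutions is a direct computation.

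There is no genuinely hard step, but several places need care. The energy identity must be justified rigorously, which is why the soft step ($v'>0$, hence the equation is nondegenerate and $v\in C^2$) has to come first, and one must check that $C$ is a global, not merely local, constant. The real pressure point is the behaviour of the separated integrals at the singular endpoint $s=0$: everything hinges on $\int_0 s^{(\gamma-1)/p}\,ds<\infty$, which holds precisely when $\gamma>1$ --- this single fact is what both produces the explicit solutions for $\gamma>1$ and, through the opposite sign of $v^{1-\gamma}$ in the energy, destroys all solutions for $0<\gamma\le1$.
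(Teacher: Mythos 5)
Your proof is correct, and its core — the first integral $\frac{p-1}{p}(v')^p-\frac{v^{1-\gamma}}{\gamma-1}=C$ (resp.\ the $\ln v$ and $\frac{v^{1-\gamma}}{1-\gamma}$ variants), the sign analysis of $C$, and the explicit integration giving $v_0$ for $C=0$ and the $v_1$-family for $C>0$ — is the same as the paper's. The genuine difference lies in the preliminary step: the paper obtains $v'\ge 0$ by invoking Theorem \ref{th:decreasing} (hence the weak comparison principle, Proposition \ref{prop:wcp}) and gets $v\to+\infty$ from Lemma \ref{lem:lower_bound}, then upgrades to $v'>0$ by a maximal-interval argument; you instead work directly with $\phi:=|v'|^{p-2}v'$, note $\phi'=-v^{-\gamma}<0$ so $\phi$ is strictly decreasing, and rule out $\phi(t_1)\le 0$ (and likewise boundedness of $v$) by the linear decay $\phi(t)\le\phi(t_1)-v(t_1)^{-\gamma}(t-t_1)$, which would force $v\to-\infty$ or contradict $\phi>0$. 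Your route is more elementary and self-contained for the one-dimensional statement, while the paper's simply reuses machinery it has already built for the higher-dimensional theorems. Your handling of the case $C>0$ by scaling the energy constant to $1$ and then undoing the scaling is equivalent to the paper's change of variables in its formula \eqref{gh} with $M=\lambda^{\frac{(\gamma-1)p}{\gamma+p-1}}$; both hinge on the same integrability of $s^{(\gamma-1)/p}$ at $0$, exactly as you point out.
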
	
	The proof of Theorem \ref{th:1D} combines PDE and ODE techniques. Some special cases were obtained in \cite[Proposition 2.4]{MR4044739} and \cite[Theorem 11]{MR4753083} by different methods. We expect that all solutions to \eqref{pure} with $1<p<N$ in higher dimensions without restriction \eqref{sublinear} are indeed 1D and given by Theorem \ref{th:1D}. We leave it as an open question.
	
	The rest of this paper is organized as follows. In Section \ref{sect2} we recall two versions of strong comparison principles that will be used later. Then we prove a weak comparison principle for strips, prove Proposition \ref{prop:zero_convergence}, and provide some a priori bounds for solutions. In Section \ref{sect3}, we use the method of moving planes to prove Theorem \ref{th:monotonicity} and use a scaling technique to prove Theorem \ref{th:monotonicity2}. In Section \ref{sect4}, we exploit some other comparison principles and the sliding method to prove the 1D symmetry of solutions stated in Theorem \ref{th:rigidity1}, \ref{th:rigidity2} and Proposition \ref{prop:rigidity}. In Section \ref{sect5}, we focus on problems whose nonlinearity is strictly decreasing in the whole $(0,+\infty)$ and we provide proofs for Theorems \ref{th:decreasing}, \ref{th:1D}, \ref{th:pure} and \ref{th:pure_N}.
	
	\section{Preliminaries}\label{sect2}
	
	We always assume that $\Omega$ is a connected domain of $\mathbb{R}^N$ and $f$ is a locally Lipschitz continuous function.
	In the quasilinear case, the maximum principle is not equivalent to the comparison one. Therefore, we also need to recall the classical version of the strong comparison principle for $p$-Laplace equations.
	\begin{theorem}[Strong comparison principle 1 \cite{MR1632933}]\label{th:cscp}
		Let $u,v\in C^1(\Omega)$ be two solutions to
		\[
		-\Delta_p w = f(w) \quad\text{ in } \Omega
		\]
		such that $u \le v$ in $\Omega$, with $p>1$ and let
		\[
		Z=\{x\in\Omega \mid |\nabla u(x)|+|\nabla v(x)|=0\}.
		\]
		If $x_0\in\Omega\setminus Z$ and $u(x_0)=v(x_0)$, then $u=v$ in the connected component of $\Omega\setminus Z$ containing $x_0$.
	\end{theorem}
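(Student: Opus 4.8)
The plan is to run a connectedness argument on the contact set and reduce everything to a local strong maximum principle for a \emph{uniformly elliptic} linearized equation. Set $w:=v-u\ge0$ and let $\mathcal C$ be the connected component of $\Omega\setminus Z$ containing $x_0$; since $|\nabla u|+|\nabla v|$ is continuous, $Z$ is relatively closed in $\Omega$, so $\Omega\setminus Z$ is open and $\mathcal C$ is open. It suffices to prove that $E:=\{y\in\mathcal C\mid w(y)=0\}$ is open in $\mathcal C$: it is nonempty (it contains $x_0$) and relatively closed in $\mathcal C$ by continuity of $w$, so openness forces $E=\mathcal C$, that is $u\equiv v$ on $\mathcal C$.

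To prove that $E$ is open, fix $y\in E$. Since $w\in C^1(\Omega)$ with $w\ge0$ and $w(y)=0$, the point $y$ is an interior minimum of $w$, hence $\nabla w(y)=0$, i.e.\ $\nabla u(y)=\nabla v(y)$; and since $y\notin Z$ this common value is nonzero. By continuity one can choose $r>0$ with $\overline{B_r(y)}\subset\mathcal C$ and constants $0<\delta\le M$ such that
\[
\delta\le\bigl|(1-t)\nabla u(x)+t\nabla v(x)\bigr|\le M\qquad\text{for all }x\in B_r(y),\ t\in[0,1].
\]
The lower bound is the crucial point: at $x=y$ the segment $t\mapsto(1-t)\nabla u(x)+t\nabla v(x)$ collapses to the single nonzero vector $\nabla u(y)$, and such a bound is stable under small perturbations of $x$.

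Subtracting the two equations and applying the fundamental theorem of calculus to the map $\xi\mapsto|\xi|^{p-2}\xi$ along these segments gives, in $B_r(y)$,
\[
-\operatorname{div}\bigl(A(x)\nabla w\bigr)=f(v)-f(u)=c(x)\,w,\qquad A(x):=\int_0^1|\xi_t|^{p-2}\Bigl(I+(p-2)\frac{\xi_t\otimes\xi_t}{|\xi_t|^2}\Bigr)dt,
\]
where $\xi_t=\xi_t(x):=(1-t)\nabla u(x)+t\nabla v(x)$ and $c(x):=\bigl(f(v(x))-f(u(x))\bigr)/\bigl(v(x)-u(x)\bigr)$ (put $c(x)=0$ where $u(x)=v(x)$), so that $\|c\|_{L^\infty(B_r(y))}\le L$ with $L$ a Lipschitz constant for $f$ on the compact range of $u$ and $v$ over $\overline{B_r(y)}$. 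The two-sided bound on $|\xi_t|$ makes $A$ continuous on $B_r(y)$ and uniformly elliptic, with ellipticity constants depending only on $p,\delta,M$ (one uses that the eigenvalues of $|\xi|^{p-2}(I+(p-2)\xi\otimes\xi/|\xi|^2)$ are $|\xi|^{p-2}$ and $(p-1)|\xi|^{p-2}$, both positive for $p>1$). Rewriting $-\operatorname{div}(A\nabla w)+(\|c\|_\infty-c)\,w=\|c\|_\infty\,w\ge0$ with nonnegative zeroth-order coefficient shows that $w$ is a nonnegative weak supersolution of a uniformly elliptic divergence-form operator. By the weak Harnack inequality, on concentric balls $B_\rho(y)$ with $\overline{B_{2\rho}(y)}\subset B_r(y)$ one has $\|w\|_{L^q(B_{2\rho}(y))}\le C\inf_{B_\rho(y)}w$ for some $q>0$; since $\inf_{B_\rho(y)}w\le w(y)=0$ and $w\ge0$, this forces $w\equiv0$ on $B_{2\rho}(y)$. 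Hence $B_{2\rho}(y)\subset E$, so $E$ is open and the theorem follows.

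The step I expect to be the main obstacle is precisely the passage from the degenerate $p$-Laplacian to a genuinely uniformly elliptic linearized equation near a contact point: one must exploit that a contact point lying in $\Omega\setminus Z$ is a regular point in the strong sense $\nabla u=\nabla v\ne0$, which is what keeps $|\xi_t|$ bounded away from $0$ in a neighborhood. Without this, the weight $|\xi_t|^{p-2}$ would degenerate — vanishing if $p>2$ and blowing up if $1<p<2$ — and the weak Harnack / strong maximum principle step would be unavailable; correctly tracking the sign of $p-2$ in the ellipticity bounds is a minor but necessary bookkeeping issue.
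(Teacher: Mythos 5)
Your argument is correct: at a touching point $y$ in $\Omega\setminus Z$ one has $\nabla u(y)=\nabla v(y)\neq 0$, so the segment $\xi_t$ stays away from the origin on a small ball, the linearized operator is uniformly elliptic with bounded coefficients (with the $p<2$ / $p>2$ cases handled by the two eigenvalues $|\xi|^{p-2}$ and $(p-1)|\xi|^{p-2}$), and the weak Harnack inequality plus the open/closed connectedness argument yields $u\equiv v$ on the component. Note that the paper does not prove Theorem \ref{th:cscp} at all—it is imported from the cited reference \cite{MR1632933}—and your proof is essentially the classical linearization argument given there, so there is nothing to reconcile beyond the observation that the locally Lipschitz bound on $f$ must be taken on the (compact, positive) range of $u,v$ over $\overline{B_r(y)}$, which you did.
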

	
	Theorem \ref{th:cscp} only holds far from the degenerate set. Now we present a result that holds, under stronger assumptions, on the entire domain $\Omega$.
	\begin{theorem}[Strong comparison principle 2 \cite{MR2096703}]\label{th:scp}
		Let $u,v\in C^1(\overline{\Omega})$ be two solutions to
		\[
		-\Delta_p w = f(w) \quad\text{ in } \Omega,
		\]
		where $p > \frac{2N+2}{N+2}$. Assume $u\le v$ in $\Omega$ and at least one of the following two conditions holds:
		\begin{itemize}
			\item[(i)] either
			\[
			f(u(x)) > 0 \quad\text{ in } \overline{\Omega}
			\]
			or
			\[
			f(u(x)) < 0 \quad\text{ in } \overline{\Omega},
			\]
			\item[(ii)] either
			\[
			f(v(x)) > 0 \quad\text{ in } \overline{\Omega}
			\]
			or
			\[
			f(v(x)) < 0 \quad\text{ in } \overline{\Omega}.
			\]
		\end{itemize}
		Then either $u=v$ in $\Omega$ or $u<v$ in $\Omega$.
	\end{theorem}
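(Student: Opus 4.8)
The plan is to reduce the claimed dichotomy to a strong minimum principle for $w:=v-u\ge0$, which solves a \emph{linear} equation that is locally uniformly elliptic off the critical set $Z:=\{|\nabla u|+|\nabla v|=0\}$ and degenerates on it; the two structural hypotheses ($p>\frac{2N+2}{N+2}$ and the constant sign of $f(u)$ or of $f(v)$) are needed precisely to push the minimum principle through $Z$. Subtracting the two equations and using the elementary identity $|\xi|^{p-2}\xi-|\zeta|^{p-2}\zeta=\big(\int_0^1|\zeta+t(\xi-\zeta)|^{p-2}(\mathrm{Id}+(p-2)\,\omega_t\otimes\omega_t)\,dt\big)(\xi-\zeta)$, with $\omega_t$ the unit vector in the direction of $\zeta+t(\xi-\zeta)$, together with $f(v)-f(u)=\big(\int_0^1 f'(u+tw)\,dt\big)\,w$, one obtains
\[
-\operatorname{div}\big(A(x)\nabla w\big)=c(x)\,w\qquad\text{in }\Omega,
\]
where $c\in L^\infty_{\mathrm{loc}}(\Omega)$ (since $f$ is locally Lipschitz and $u,v\in C^1$) and the symmetric matrix $A(x)$ has, on $\Omega\setminus Z$, eigenvalues comparable to $(|\nabla u(x)|+|\nabla v(x)|)^{p-2}$.

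On $\Omega\setminus Z$ the operator $w\mapsto-\operatorname{div}(A\nabla w)-c\,w$ is locally uniformly elliptic with bounded coefficients, so rewriting it with the nonnegative zeroth-order coefficient $c^-$ makes $w\ge0$ a supersolution, and the classical strong minimum principle — equivalently, Theorem~\ref{th:cscp} applied to $u\le v$ — shows that if $w$ vanishes at one point of a connected component of $\Omega\setminus Z$, then $w\equiv0$ on that component. Consequently, if the dichotomy failed, both $\{w>0\}$ and $\{w=0\}$ would be nonempty; picking $x_0\in\partial\{w>0\}\cap\Omega$ one has $w(x_0)=0$, hence $\nabla w(x_0)=0$ (interior minimum of $w\in C^1$), so $\nabla u(x_0)=\nabla v(x_0)$, and $x_0$ cannot lie in $\Omega\setminus Z$ by the previous sentence. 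Thus $x_0\in Z$, i.e.\ $\nabla u(x_0)=\nabla v(x_0)=0$; in particular every such dangerous point lies in $Z_u\cap Z_v$, where $Z_u:=\{\nabla u=0\}$ and $Z_v:=\{\nabla v=0\}$.

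The crux is to rule out such an $x_0$, and here the hypotheses enter. Under (i), say $f(u)>0$ on $\overline\Omega$ (the case $f(u)<0$ is identical, and condition (ii) is handled the same way with $v$ in place of $u$, since $Z\subseteq Z_v$ as well), we have $-\Delta_p u\ge c_0>0$ on every ball $B\subset\subset\Omega$, and the regularity theory for $p$-Laplace equations with right-hand side bounded away from zero — and this is exactly where the sharp range $p>\frac{2N+2}{N+2}$ is used — yields $u\in W^{2,2}_{\mathrm{loc}}(\Omega)$, a local summability bound $|\nabla u|^{-1}\in L^{r}_{\mathrm{loc}}(\Omega)$ for a suitable $r>0$, weighted Sobolev–Poincaré and Harnack inequalities for the weight $\rho:=(|\nabla u|+|\nabla v|)^{p-2}$, and the negligibility of $Z_u$ (zero $W^{1,2}$-capacity). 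Then $w\ge0$ is a nonnegative weak solution of a degenerate linear equation whose coefficient matrix is comparable to $\rho\,\mathrm{Id}$ with $\rho$ admissible, so a weighted weak Harnack inequality applies; iterating it along a chain of balls joining a point of $\{w>0\}$ to $x_0$ — crossing only the negligible set $Z\subseteq Z_u$ — forces $w(x_0)>0$, contradicting $w(x_0)=0$.

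Hence $\partial\{w>0\}\cap\Omega=\emptyset$, so $\{w>0\}$ is open and closed in the connected set $\Omega$: either it is empty and $u\equiv v$, or it is all of $\Omega$ and $u<v$. The genuine difficulty is concentrated in the step of propagating the strong minimum principle \emph{through} the degenerate set: this is where the sign condition on $f(u)$ (or $f(v)$) is indispensable, to obtain the quantitative non-degeneracy of $|\nabla u|$ near $Z_u$ and the admissibility of the weight $\rho$, and where $p>\frac{2N+2}{N+2}$ is the sharp threshold guaranteeing that the resulting weighted integrability exponents are large enough for the weak Harnack inequality to hold.
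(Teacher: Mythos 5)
The paper does not prove Theorem \ref{th:scp} at all: it is recalled verbatim from Damascelli--Sciunzi \cite{MR2096703}, and your outline is essentially a reconstruction of that reference's strategy (linearize the difference $w=v-u$, apply the classical principle of Theorem \ref{th:cscp} off the degenerate set $Z$, and propagate through $Z$ by weighted estimates). So the route you chose is the correct one, and the surrounding topological reduction (picking $x_0\in\partial\{w>0\}\cap\Omega$, forcing $x_0\in Z$, and concluding by openness/closedness of $\{w=0\}$ in the connected set $\Omega$) is sound; in fact the chaining you describe can be replaced by the cleaner observation that the weighted weak Harnack inequality directly makes $\{w=0\}$ open.

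The genuine gap is that everything which makes the theorem true is asserted rather than established. The ingredients invoked in your third paragraph --- local summability of $|\nabla u|^{-1}$ under the sign condition on $f(u)$ (or $f(v)$), the weighted Sobolev--Poincar\'e inequality for the weight $\rho=(|\nabla u|+|\nabla v|)^{p-2}$, the weighted weak Harnack inequality for the linearized operator, and the identification of $p>\frac{2N+2}{N+2}$ as the threshold making the resulting exponents compatible --- are precisely the main theorems of \cite{MR2096703}; none of them is routine, and your sketch gives no indication of how to obtain them, so the ``crux'' step is a plan rather than an argument (and, at the level of attribution, invoking them amounts to citing the very work the theorem comes from, which is exactly what the paper does). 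Two smaller inaccuracies in that paragraph: the $W^{2,2}_{\rm loc}$ regularity you claim is known in general only for $p<3$ (for larger $p$ one has $|\nabla u|^{p-2}\nabla u\in W^{1,2}_{\rm loc}$ and $u\in W^{2,q}_{\rm loc}$ with some $q<2$), and what the Harnack machinery actually requires is the vanishing Lebesgue measure of $Z_u$ together with the integrability of $\rho^{-1}$, not a $W^{1,2}$-capacity statement.
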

	
	In the situations where the above strong comparison principles do not apply, we will make use of the following weak sweeping principle by Dancer and Du.
	\begin{theorem}[Weak sweeping principle \cite{MR3058211}]\label{th:wsp}
		Suppose that $\Omega$ is a bounded smooth domain in $\mathbb{R}^N$, $(x, s)\mapsto h(x, s)$ is measurable in $x \in \Omega$,
		continuous in $s$, and for each finite interval $J$, there exists a continuous increasing function $L(s)$ such
		that $s\mapsto h(x, s) + L(s)$ is nondecreasing in $s$ for $s \in J$ and $x \in \Omega$. Let $u_t$ and $v_t$, $t \in [t_1, t_2]$, be functions in $W^{1,p}(\Omega) \cap C(\overline \Omega)$ and satisfy in the weak sense,
		\[
		\begin{cases}
			-\Delta_p u_t \ge h(x,u_t) + \varepsilon_1(t) & \text{ in } \Omega,\\
			-\Delta_p v_t \le h(x,v_t) - \varepsilon_2(t) & \text{ in } \Omega,\\
			u_t \ge v_t + \varepsilon & \text{ on } \partial\Omega,\\
		\end{cases}
		\]
		for all $t \in [t_1, t_2]$, where
		\[
		\varepsilon_1(t) + \varepsilon_2(t) \ge \varepsilon > 0.
		\]
		Moreover, suppose that $u_{t_0} \ge v_{t_0}$ in $\Omega$ for some $t_0 \in [t_1, t_2]$ and $t \mapsto u_t$, $t \mapsto v_t$ are continuous from the finite closed interval $[t_1, t_2]$ to $C(\overline \Omega)$. Then
		\[
		u_t \ge v_t \text{ in } \Omega \text{ for all } t \in [t_1, t_2].
		\]
	\end{theorem}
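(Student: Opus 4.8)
The plan is a continuity (homotopy) argument in the parameter $t$, whose only substantial ingredient is a \emph{uniform interior separation estimate} that upgrades $u_t\ge v_t$ to $u_t\ge v_t+\delta$; this estimate is driven purely by the monotonicity of the $p$-Laplacian vector field and of the shift $L$, so that no restriction on $p$ and no strong maximum principle at degenerate points are needed. First I would fix a compact interval $J\subset\mathbb R$ containing the ranges of all $u_t$ and $v_t$, $t\in[t_1,t_2]$ (possible since $t\mapsto u_t,v_t$ are continuous into $C(\overline\Omega)$ and $[t_1,t_2]$ is compact), together with the continuous increasing function $L$ on $J$ provided by the hypothesis, with modulus of continuity $\omega_L$, so that $s\mapsto h(x,s)+L(s)$ is nondecreasing on $J$ for every $x$. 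Then I would set
\[
T:=\{t\in[t_1,t_2]\,:\,u_t\ge v_t\ \text{in}\ \overline\Omega\}.
\]
Here $t_0\in T$, and $T$ is closed because the inequality $u_t\ge v_t$ passes to uniform limits; since $[t_1,t_2]$ is connected, it will suffice to prove that $T$ is relatively open.

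The core claim I would establish is: there exists $\delta\in(0,\varepsilon)$, depending only on $\varepsilon$ and $\omega_L$ (hence not on $t$), such that $t\in T$ implies $u_t\ge v_t+\delta$ in $\Omega$. To prove it, fix $t\in T$, add $L(u_t)\varphi$ (resp. $L(v_t)\varphi$) to both sides of the two weak differential inequalities, and subtract them; using $u_t\ge v_t$ and the monotonicity of $s\mapsto h(x,s)+L(s)$, the reaction terms combine with favorable sign, and one is left with
\[
\int_\Omega\big(|\nabla u_t|^{p-2}\nabla u_t-|\nabla v_t|^{p-2}\nabla v_t\big)\cdot\nabla\varphi+\int_\Omega\big(L(u_t)-L(v_t)\big)\varphi\ \ge\ \varepsilon\int_\Omega\varphi
\]
for every $0\le\varphi\in W^{1,p}_0(\Omega)$. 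Now I would test with $\varphi=\psi:=(\delta-(u_t-v_t))^+$: since $u_t\ge v_t+\varepsilon>v_t+\delta$ on $\partial\Omega$, $\psi$ vanishes near $\partial\Omega$ and so lies in $W^{1,p}_0(\Omega)$. On $\{0\le u_t-v_t<\delta\}$ one has $\nabla\psi=-\nabla(u_t-v_t)$, so by the elementary monotonicity inequality $(|\xi|^{p-2}\xi-|\eta|^{p-2}\eta)\cdot(\xi-\eta)\ge0$ the first integral is $\le0$, while $0\le L(u_t)-L(v_t)\le\omega_L(\delta)$ there makes the second integral at most $\omega_L(\delta)\int_\Omega\psi$. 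Hence $\varepsilon\int_\Omega\psi\le\omega_L(\delta)\int_\Omega\psi$, and choosing $\delta$ so small that $\omega_L(\delta)<\varepsilon$ forces $\psi\equiv0$ a.e., i.e. $u_t-v_t\ge\delta$ in $\Omega$ by continuity. This proves the claim with $\delta$ independent of $t$.

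Granting the claim, openness of $T$ follows at once. If $\bar t\in T$, then $u_{\bar t}-v_{\bar t}\ge\delta$ in $\Omega$ and $u_{\bar t}-v_{\bar t}\ge\varepsilon\ge\delta$ on $\partial\Omega$, so $u_{\bar t}-v_{\bar t}\ge\delta$ on all of $\overline\Omega$. By the continuity of $t\mapsto u_t,v_t$ into $C(\overline\Omega)$ there is a neighborhood of $\bar t$ in $[t_1,t_2]$ on which $\|u_t-u_{\bar t}\|_{C(\overline\Omega)}+\|v_t-v_{\bar t}\|_{C(\overline\Omega)}<\delta$, and there $u_t-v_t\ge\delta-\|u_t-u_{\bar t}\|_\infty-\|v_t-v_{\bar t}\|_\infty>0$, so $t\in T$. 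Thus $T$ is open, hence $T=[t_1,t_2]$, which is exactly the conclusion $u_t\ge v_t$ in $\Omega$ for all $t\in[t_1,t_2]$.

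The step I expect to demand the most care is the separation claim, and within it two routine-but-delicate points: (a) justifying that the two weak inequalities may legitimately be shifted by $L(u_t)\varphi$, $L(v_t)\varphi$ and subtracted against the common class of nonnegative test functions $\varphi\in W^{1,p}_0(\Omega)$ — this requires $h(\cdot,u_t(\cdot))$ and $h(\cdot,v_t(\cdot))$ to be admissible right-hand sides, which is implicit in the hypotheses and automatic when $h$ is locally bounded on $\overline\Omega\times J$ — and (b) the sign bookkeeping for the truncation $\psi=(\delta-(u_t-v_t))^+$ on the level set $\{u_t-v_t<\delta\}$. The single genuinely quasilinear ingredient is the monotonicity inequality for $\xi\mapsto|\xi|^{p-2}\xi$; it is precisely this that lets the argument run for all $p>1$ with no dimension restriction (in contrast to Theorem \ref{th:scp}) and without invoking a strong maximum principle at points where $\nabla u_t$ and $\nabla v_t$ both vanish. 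I would also stress why the more naive route fails: testing $-\Delta_p u_t\ge\cdots$, $-\Delta_p v_t\le\cdots$ against $(v_t-u_t)^+$ at a general $t$ and letting $t\to\bar t$ only yields that $\int_\Omega(v_t-u_t)^+$ is small, not that it vanishes, so it does not close the homotopy; establishing the uniform interior gap $\delta$ at parameters \emph{already} in $T$ is what makes the continuity argument work.
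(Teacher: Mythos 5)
Your proof is correct, and the paper itself offers no proof of this statement — it only cites \cite{MR3058211} and notes that the argument is essentially that of Lemma 2.7 in \cite{MR1955278}, which is precisely the continuity-in-$t$ scheme you carry out: closedness of $\{t:u_t\ge v_t\}$ by uniform convergence, and openness via the quantitative separation $u_t\ge v_t+\delta$ obtained by testing with $(\delta-(u_t-v_t))^+$, using only the monotonicity of $\xi\mapsto|\xi|^{p-2}\xi$ and of $s\mapsto h(x,s)+L(s)$ together with the uniform gap $\varepsilon_1(t)+\varepsilon_2(t)\ge\varepsilon$. Your closing remark about why the naive comparison at nearby parameters fails, and why the $t$-independent interior gap $\delta$ is the crux, is exactly the right diagnosis.
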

	
	The statement of Theorem \ref{th:wsp} is taken from \cite{MR3058211}. The proof of this theorem is almost identical to that of \cite[Lemma 2.7]{MR1955278}.
	
	Throughout the paper, we denote generic positive constants by $C$ (with dependent subscripts in some cases) and they will be allowed to vary within a single line or formula. We also denote by $f^+$ the positive part of a function $f$, that is, $f^+ = \max\{f,0\}$ and by $B_R$, $B_R'$ the open balls of radius $R>0$ centered at the origin in $\mathbb{R}^N$ and $\mathbb{R}^{N-1}$, respectively. For brevity, we drop $dx$ in the integral notations when it is clear from the context.
	
	\subsection{Weak comparison principle for strips}
	The aim of this section is the following weak comparison principle, which can be applied to problems with singular nonlinearities.
	\begin{proposition}\label{prop:wcp}
		Let $f:(0,+\infty)\to\mathbb{R}$ be a locally Lipschitz continuous function such that $f$ is strictly decreasing on $(0,t_0)$ for some $t_0>0$ and let $\Sigma := \Sigma_{\lambda}$ with $\lambda>0$. Assume that $u, v \in C^{1,\alpha}_{\rm loc}(\Sigma) \cap C(\overline{\Sigma})$ satisfy
		\begin{equation}\label{ky}
			\begin{cases}
				-\Delta_p u = f(u) &\text{ in } \Sigma,\\
				-\Delta_p v = f(v) &\text{ in } \Sigma,\\
				0<u<t_0 &\text{ in } \Sigma,\\
				v>\delta &\text{ in } \Sigma,\\
				u \le v &\text{ on } \partial\Sigma
			\end{cases}
		\end{equation}
		for some $\delta>0$. Then $u\le v$ in $\Sigma$.
	\end{proposition}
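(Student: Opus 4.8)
The plan is to show that the set $A:=\{x\in\Sigma\mid u(x)>v(x)\}$ is empty, by a weak comparison (energy) argument whose basic object is $w:=(u-v)^+$. First I would record the structural facts that make the estimates run. On $A$ one has $\delta<v<u<t_0$, so both $u$ and $v$ take values in $(0,t_0)$, where $f$ is strictly decreasing; hence $f(u)\le f(v)$ on $A$ (strictly, in fact, but $\le$ is all that is needed). Moreover, since $u,v\in C(\overline\Sigma)$ and $u\le v$ on $\partial\Sigma$, the continuous function $w$ vanishes on $\partial\Sigma$, i.e. on both faces $\mathbb{R}^{N-1}\times\{0\}$ and $\mathbb{R}^{N-1}\times\{\lambda\}$ of the strip; consequently a one--dimensional Poincaré inequality in the $x_N$ variable gives $\|w\|_{L^p(D)}\le c(\lambda,p)\,\|\nabla w\|_{L^p(D)}$ for every set $D=\omega\times(0,\lambda)$ with $\omega\subset\mathbb{R}^{N-1}$. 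Finally, $\nabla w=(\nabla u-\nabla v)\chi_A$ and on $A$ both solutions are bounded below by $\delta$, so the equations are uniformly elliptic there; this gives $w\in W^{1,p}_{\mathrm{loc}}$ up to $\partial\Sigma$ with zero trace, which is what makes the test functions below admissible.

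Then I would fix a cutoff $\zeta_R=\zeta_R(x')$ with $\zeta_R\equiv1$ on $B_R'$, $\operatorname{supp}\zeta_R\subset B_{2R}'$, $0\le\zeta_R\le1$ and $|\nabla\zeta_R|\le C/R$, and plug $\varphi:=w\,\zeta_R^{\,p}$ into the weak formulations for $u$ and $v$. Subtracting the two identities and using the pointwise inequality $(|\nabla u|^{p-2}\nabla u-|\nabla v|^{p-2}\nabla v,\ \nabla u-\nabla v)\ge0$ on $A$ together with $\int_\Sigma(f(u)-f(v))\,w\,\zeta_R^{\,p}\le0$, one obtains
\[
0\le\int_\Sigma\zeta_R^{\,p}\,(|\nabla u|^{p-2}\nabla u-|\nabla v|^{p-2}\nabla v,\ \nabla w)\le C\int_{(B_{2R}'\setminus B_R')\times(0,\lambda)\,\cap\,A}\big(|\nabla u|^{p-1}+|\nabla v|^{p-1}\big)\,|\nabla\zeta_R|\,w=:I_R.
\]
If I can show $I_R\to0$ as $R\to\infty$ (if need be along a subsequence), then monotone convergence gives $\int_\Sigma(|\nabla u|^{p-2}\nabla u-|\nabla v|^{p-2}\nabla v,\ \nabla w)\le0$; since the integrand is nonnegative it vanishes a.e., and strict monotonicity of $\xi\mapsto|\xi|^{p-2}\xi$ forces $\nabla u=\nabla v$ a.e. on $A$, i.e. $\nabla w\equiv0$ on the connected set $\Sigma$. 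Together with $w=0$ on $\partial\Sigma$ this yields $w\equiv0$, i.e. $u\le v$ in $\Sigma$.

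The crux --- and the step I expect to be the main obstacle --- is the decay $I_R\to0$ on the unbounded strip. To get it I would first prove a Caccioppoli-type energy bound: testing each equation with $w\,\zeta_R^{\,p}$ and using $w\in L^\infty$ together with the boundedness of $f(u),f(v)$ on $A$ (where $\delta<u,v<t_0$) yields $\int_{B_R'\times(0,\lambda)\cap A}(|\nabla u|^p+|\nabla v|^p)\le CR^{N-1}$. Feeding this into $I_R$ via $w\le t_0-\delta$, $|\nabla\zeta_R|\le C/R$, Hölder's inequality on the annular slab, and the Poincaré inequality for $w$ recorded above, one bounds $I_R$ by a negative power of $R$ (with, if necessary, a logarithmic gain from choosing $\zeta_R$ logarithmic on $B_{2R}'\setminus B_R'$), so that $I_R\to0$. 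In the degenerate range $1<p<2$ the same scheme works after replacing $\big(|\xi|^{p-2}\xi-|\eta|^{p-2}\eta,\xi-\eta\big)\ge c\,|\xi-\eta|^p$ by the weaker bound $\ge c\,|\xi-\eta|^2\big(|\xi|+|\eta|\big)^{p-2}$, which only changes the bookkeeping in the Young and Hölder steps.
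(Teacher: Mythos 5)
Your scheme breaks down exactly at the step you flag as the crux: the decay $I_R\to 0$ does not follow from the ingredients you list, and in fact fails for $N\ge 3$. Your Caccioppoli bound gives $\int_{(B_{2R}'\setminus B_R')\times(0,\lambda)\cap A}(|\nabla u|^p+|\nabla v|^p)\le CR^{N-1}$, and then H\"older with $|\nabla\zeta_R|\le C/R$ and $w\le t_0$ yields only
\[
I_R\le \frac{C}{R}\Bigl(\int_{\mathrm{ann}}(|\nabla u|^p+|\nabla v|^p)\Bigr)^{\frac{p-1}{p}}\bigl|\mathrm{ann}\bigr|^{\frac1p}\le C\,R^{N-2},
\]
which does not tend to zero when $N\ge 3$ (a logarithmic cutoff gains at most a factor $(\log R)^{-1}$, still insufficient; the Poincar\'e inequality on the strip does not improve this, since it only trades $w$ for $\nabla w$ at the same $R^{N-1}$ rate). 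The root cause is your remark that ``$f(u)\le f(v)$ on $A$ \ldots\ $\le$ is all that is needed'': discarding the sign information is precisely what leaves you with nothing to beat the $R^{N-2}$ growth of the annular error. The paper's proof uses the strict decrease of $f$ \emph{quantitatively}: it works with the shifted function $w=(u-v-\varepsilon)^+$, so that on $\{w>0\}$ one has $u-v\ge\varepsilon$ and (by the compactness argument of Lemma \ref{lem:k}) $f(u)-f(v)\le -C_\varepsilon w$. This produces a coercive zero-order term $-C_\varepsilon\int w^{\alpha+1}\varphi_R^{\alpha+1}$ which, after testing with $w^{\alpha}\varphi_R^{\alpha+1}$ ($\alpha>N-2$) and a weighted Young inequality, absorbs the entire cutoff cross-term and leaves only $\int|\nabla\varphi_R|^{\alpha+1}\le CR^{N-\alpha-2}\to0$ in every dimension; then one lets $\varepsilon\to0$. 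Without some such zero-order absorption, a pure Caccioppoli--H\"older--Poincar\'e bookkeeping on the unbounded strip cannot close.

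A secondary gap: your test function $\varphi=w\,\zeta_R^p$ with $w=(u-v)^+$ is only known to vanish \emph{on} $\partial\Sigma$, not to be supported away from it, while $u,v$ are merely $C^{1,\alpha}_{\rm loc}(\Sigma)\cap C(\overline\Sigma)$ and their gradients may blow up at $\{x_N=0\}$ (this is the typical behaviour for the singular nonlinearities the proposition is designed for). Your justification via ``uniform ellipticity on $A$'' does not control what happens in neighbourhoods of points of $A$ that approach the boundary, so the admissibility of $\varphi$ is not established. The $\varepsilon$-truncation fixes this too: since $u\le v$ on $\partial\Sigma$ and both are continuous on $\overline\Sigma$, the support of $(u-v-\varepsilon)^+\varphi_R^{\alpha+1}$ is compactly contained in $\Sigma$, so no trace or approximation argument is needed.
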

	
	To prove Proposition \ref{prop:wcp}, we need the following elementary lemma, which would appear somewhere in the literature. However, we cannot find a suitable reference. Therefore, we provide a proof for the reader's convenience.
	\begin{lemma}\label{lem:k}
		Let $-\infty \le m < M \le +\infty$.
		If $g:(m,M)\to\mathbb{R}$ is continuous and strictly decreasing, then
		\begin{equation}\label{k1}
			\sup_{\substack{t_1,t_2\in[l_1,l_2]\\t_2-t_1\ge\varepsilon}} \frac{g(t_2)-g(t_1)}{t_2-t_1} < 0
		\end{equation}
		for every $\varepsilon>0$ and every interval $[l_1,l_2] \subset (m,M)$ with $l_2-l_1\ge\varepsilon$.
		
		If we further assume that $-\infty < m < M=+\infty$ and $g$ is differentiable with
		\begin{equation}\label{neg_deri}
			\limsup_{t\to+\infty} g'(t) < 0,
		\end{equation}
		then
		\begin{equation}\label{k2}
			\sup_{\substack{t_1,t_2\in(m,+\infty)\\t_2-t_1\ge\varepsilon}} \frac{g(t_2)-g(t_1)}{t_2-t_1} < 0
		\end{equation}
		for all $\varepsilon>0$.
	\end{lemma}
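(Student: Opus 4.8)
The two parts are proved separately: \eqref{k1} is pure compactness, while \eqref{k2} additionally needs a telescoping trick to cope with the noncompact domain $(m,+\infty)$. For \eqref{k1}, observe that the set $K:=\{(t_1,t_2)\in[l_1,l_2]^2 \mid t_2-t_1\ge\varepsilon\}$ is compact and, since $l_2-l_1\ge\varepsilon$, nonempty; on $K$ the map $(t_1,t_2)\mapsto\frac{g(t_2)-g(t_1)}{t_2-t_1}$ is continuous (the denominator is $\ge\varepsilon>0$) and hence attains its maximum. Because $g$ is strictly decreasing and $t_2>t_1$ on $K$, this map is strictly negative everywhere on $K$, so its maximum is strictly negative, which is \eqref{k1}.

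For \eqref{k2}, I would first use \eqref{neg_deri} to fix $\sigma>0$ and $R>m$ with $g'(t)\le-\sigma$ for all $t\ge R$, and then introduce $\psi(t):=g(t)-g(t+\varepsilon)$ on $(m,+\infty)$, which is continuous and, by the strict monotonicity of $g$, strictly positive. The key step is to show that $c_0:=\inf_{(m,+\infty)}\psi>0$. For $t\ge R$ one has $\psi(t)=\int_t^{t+\varepsilon}\big(-g'(s)\big)\,ds\ge\sigma\varepsilon$. As $t\to m^+$, $g(t+\varepsilon)\to g(m+\varepsilon)$ by continuity while $g(t)$ increases to $\ell:=\sup_{(m,+\infty)}g$, which satisfies $\ell>g(m+\varepsilon)$ (possibly $\ell=+\infty$) since $g$ is strictly decreasing; hence $\psi(t)\to\ell-g(m+\varepsilon)\in(0,+\infty]$, so $\psi$ is bounded below by a positive constant on $(m,m+\delta]$ for $\delta>0$ small. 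Combining this with the continuity and positivity of $\psi$ on the compact middle interval $[m+\delta,R]$ (a positive minimum there) and with $\psi\ge\sigma\varepsilon$ on $[R,+\infty)$ gives $c_0>0$.

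Finally, for $t_1<t_2$ in $(m,+\infty)$ with $t_2-t_1\ge\varepsilon$, put $n:=\lfloor(t_2-t_1)/\varepsilon\rfloor\ge1$. Since $t_1+n\varepsilon\le t_2$ and $g$ is decreasing, the leftover term in the telescoping decomposition is nonnegative, so
\[
g(t_1)-g(t_2)=\sum_{k=0}^{n-1}\psi(t_1+k\varepsilon)+\big(g(t_1+n\varepsilon)-g(t_2)\big)\ge nc_0.
\]
As $n\ge1$ and $n>(t_2-t_1)/\varepsilon-1$, we get $2n>(t_2-t_1)/\varepsilon$, hence $g(t_1)-g(t_2)>\frac{c_0}{2\varepsilon}(t_2-t_1)$ and therefore $\frac{g(t_2)-g(t_1)}{t_2-t_1}<-\frac{c_0}{2\varepsilon}<0$ uniformly over all such pairs, which is \eqref{k2}. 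The only genuinely delicate point is the bound $c_0>0$: one must exploit the monotone limit of $g$ at the left endpoint $m$ — where $g$ is allowed to blow up — to prevent $\psi$ from degenerating there; once that is in place, everything else reduces to the tail bound from \eqref{neg_deri} and elementary compactness.
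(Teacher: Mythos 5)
Your proof is correct, but it follows a genuinely different route from the paper. The paper proves both parts by contradiction: it extracts sequences $(b_n),(c_n)$ along which the difference quotient tends to $0$ and, for \eqref{k2}, uses the mean value theorem to write this quotient as $g'(a_n)$, so that \eqref{neg_deri} forces $(a_n)$, hence $(b_n)$, to be bounded; passing to limits then contradicts either the strict monotonicity or \eqref{neg_deri}. You instead argue directly: for \eqref{k1} you use compactness of the constraint set $\{t_2-t_1\ge\varepsilon\}\subset[l_1,l_2]^2$ and continuity of the quotient, and for \eqref{k2} you establish a uniform positive lower bound $c_0$ on the $\varepsilon$-decrement $\psi(t)=g(t)-g(t+\varepsilon)$ (tail bound from \eqref{neg_deri}, compactness in the middle, and the monotone limit of $g$ at $m$, which is the genuinely delicate point and which you handle correctly), then telescope to get the explicit uniform estimate $\frac{g(t_2)-g(t_1)}{t_2-t_1}<-\frac{c_0}{2\varepsilon}$. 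Your approach buys a quantitative constant and avoids subsequence extraction, at the price of the case analysis needed for $\inf\psi>0$; the paper's argument is shorter but purely qualitative. One small remark: the identity $\psi(t)=\int_t^{t+\varepsilon}(-g'(s))\,ds$ for a function that is only assumed differentiable requires a justification (it does hold here because a monotone, everywhere differentiable $g$ has integrable derivative, so the fundamental theorem of calculus applies), but it is simpler to get $\psi(t)\ge\sigma\varepsilon$ for $t\ge R$ directly from the mean value theorem, exactly as the paper does at the corresponding step.
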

	
	\begin{proof}
		Assume by contradiction that \eqref{k1} does not hold, then there exist $\varepsilon>0$, an interval $[l_1,l_2] \subset (m,M)$ with $l_2-l_1\ge\varepsilon$ and two sequences $(b_n),(c_n)$ such that $l_1\le b_n\le c_n\le l_2$, $c_n-b_n\ge\varepsilon$ and
		\begin{equation}\label{f'n}
			\frac{g(c_n)-g(b_n)}{c_n-b_n} \to 0.
		\end{equation}
		Up to a subsequence, $b_n\to b$ and $c_n\to c$ with $b,c\in[l_1,l_2]$ such that $c-b\ge\varepsilon$. Consequently, \eqref{f'n} implies
		\[
		\frac{g(c)-g(b)}{c-b} = 0.
		\]
		This is a contradiction with the assumption that $g$ is strictly decreasing. Hence \eqref{k1} is proved.
		
		Now we assume that \eqref{k2} does not hold for some $\varepsilon>0$. Then we can find three sequences $(a_n),(b_n),(c_n)$ such that $m\le b_n\le a_n\le c_n$, $c_n-b_n\ge\varepsilon$ and
		\begin{equation}\label{xn}
			g'(a_n) = \frac{g(c_n)-g(b_n)}{c_n-b_n} \to 0.
		\end{equation}
		This implies that $(a_n)$ is bounded. Therefore, $(b_n)$ is also bounded. Passing to a subsequence, we may assume $b_n\to b \in[m,+\infty)$ and $c_n\to c \in[m,+\infty]$ with $c-b\ge \varepsilon$ if $c<+\infty$. Consequently, \eqref{xn} implies
		\[
		\frac{g(c)-g(b)}{c-b} = 0 \quad\text{ if } c<+\infty,
		\]
		\[
		\frac{g(c_n)-g(b)}{c_n-b} \to 0 \quad\text{ if } c_n\to+\infty.
		\]
		However, the former contradicts the fact that $g$ is strictly decreasing, while the latter contradicts \eqref{neg_deri}. This completes the proof.
	\end{proof}
	
	\begin{remark}
		Due to Lemma \ref{lem:k}, the assumptions (1.5) in \cite{MR4635360} and (F2) in \cite{MR4771444} can be reduced to the requirement that $f$ is strictly decreasing in the corresponding intervals.
	\end{remark}
	
	For later use, we recall the following elementary inequalities
	\begin{align}
		(|\xi|^{p-2}\xi - |\xi'|^{p-2}\xi', \xi -\xi') &\ge C_1 (|\xi| + |\xi'|)^{p-2} |\xi -\xi'|^2, \label{p_ge}\\
		\left||\xi|^{p-2}\xi - |\xi'|^{p-2}\xi'\right| &\le C_2 (|\xi| + |\xi'|)^{p-2} |\xi -\xi'|, \label{p_le}
	\end{align}
	which hold for all $\xi, \xi' \in\mathbb{R}^N$ with $|\xi|+|\xi'|>0$, where $p>1$ and $C_1,C_2>0$ depend only on $N$ and $p$.
	
	Now we provide a proof of Proposition \ref{prop:wcp}.
	
	\begin{proof}[Proof of Proposition \ref{prop:wcp}]
		For each $R>0$, let $\varphi_R \in C^1(\mathbb{R}^{N-1})$ be a standard cutoff function, which satisfies
		\begin{equation}\label{phi}
			\begin{cases}
				0 \le \varphi_R \le 1 &\text{ in } \mathbb{R}^{N-1},\\
				\varphi_R = 1 &\text{ in } B_R',\\
				\varphi_R = 0 &\text{ in } \mathbb{R}^{N-1} \setminus B_{2R}',\\
				|\nabla\varphi_R| \le \frac{2}{R} &\text{ in } B_{2R}' \setminus B_R',
			\end{cases}
		\end{equation}
		where we recall that $B_r'$ is the ball in $\mathbb{R}^{N-1}$ of radius $r$ and center at the origin.
		
		Fix some $\alpha>N-2$ and $\varepsilon>0$. Then we set $w=(u-v-\varepsilon)^+$ and
		\[
		\psi(x) := w^{\alpha}(x) \varphi_R^{\alpha+1}(x') \chi_{\Sigma}(x).
		\]
		Since the support of $\psi$ is compactly contained in $\Sigma$, we can use $\psi$ as a test function in the equations $-\Delta_p u = f(u)$ and $-\Delta_p v = f(v)$. Then subtracting, we obtain
		\begin{equation}\label{subtract}
			\begin{aligned}
				&\alpha \int_{\Sigma} (|\nabla u|^{p-2}\nabla u - |\nabla v|^{p-2}\nabla v, \nabla w) w^{\alpha-1} \varphi_R^{\alpha+1} \\
				&\qquad+ (\alpha+1) \int_{\Sigma} (|\nabla u|^{p-2}\nabla u - |\nabla v|^{p-2}\nabla v, \nabla \varphi_R) w^{\alpha} \varphi_R^\alpha\\
				&= \int_{\Sigma} \left(f(u) - f(v)\right) w^{\alpha} \varphi_R^{\alpha+1}.
			\end{aligned}
		\end{equation}
		
		Using \eqref{p_ge} and \eqref{p_le}, we deduce from \eqref{subtract} that
		\begin{equation}\label{t0}
			\begin{aligned}
				&\alpha C_1 \int_{\Sigma} (|\nabla u| + |\nabla v|)^{p-2} |\nabla w|^2 w^{\alpha-1} \varphi_R^{\alpha+1}\\
				&\le (\alpha+1)C_2 \int_{\Sigma} (|\nabla u| + |\nabla v|)^{p-2} |\nabla w| |\nabla \varphi_R| w^{\alpha} \varphi_R^\alpha + \int_{\Sigma} \left(f(u) - f(v)\right) w^{\alpha} \varphi_R^{\alpha+1}\\
				&\le (\alpha+1)C_2 \int_{\Sigma} (|\nabla u| + |\nabla v|)^{p-1} |\nabla \varphi_R| w^{\alpha} \varphi_R^\alpha + \int_{\Sigma} \left(f(u) - f(v)\right) w^{\alpha} \varphi_R^{\alpha+1}.
			\end{aligned}
		\end{equation}
		
		In the set ${\Sigma}\cap\{w > 0\}$, we have
		\begin{equation}\label{bound0}
			\delta < v < v+\varepsilon < u < t_0.
		\end{equation}
		Since $f$ is strictly decreasing on $(0,t_0)$, Lemma \ref{lem:k} gives
		\begin{equation}\label{t2}
			f(u) - f(v) \le - C_\varepsilon (u-v) \le - C_\varepsilon w \quad\text{ in } {\Sigma}\cap\{w > 0\}
		\end{equation}
		for some $C_\varepsilon>0$.
		On the other hand, from \eqref{bound0} we have that $f(u)$ and $f(v)$ are bounded. Hence the standard gradient estimate yields
		\begin{equation}\label{t3}
			|\nabla u| < C_0 \quad\text{ and }\quad |\nabla v| < C_0 \quad\text{ in } {\Sigma}\cap\{w > 0\}.
		\end{equation}
		
		Substituting \eqref{t2}, \eqref{t3} into \eqref{t0}, we obtain
		
		\begin{align*}
			&\alpha C_1 \int_{\Sigma} (|\nabla u| + |\nabla v|)^{p-2} |\nabla w|^2 w^{\alpha-1} \varphi_R^{\alpha+1}\\
			&\le (\alpha+1)C_2(2C_0)^{p-1} \int_{\Sigma} |\nabla \varphi_R| w^{\alpha} \varphi_R^\alpha - C_\varepsilon \int_{\Sigma} w^{\alpha+1} \varphi_R^{\alpha+1}.
		\end{align*}
		
		Applying the weighted Young inequality with exponents $\alpha+1$ and $\frac{\alpha+1}{\alpha}$, we have
		\begin{align*}
			&\alpha C_1 \int_{\Sigma} (|\nabla u| + |\nabla v|)^{p-2} |\nabla w|^2 w^{\alpha-1} \varphi_R^{\alpha+1}\\
			&\le \int_{\Sigma} \left(\frac{[(\alpha+1)C_2(2C_0)^{p-1}]^{\alpha+1}}{(\alpha+1)(\frac{\alpha+1}{\alpha} C_\varepsilon)^\alpha} |\nabla \varphi_R|^{\alpha+1} + C_\varepsilon w^{\alpha+1} \varphi_R^{\alpha+1}\right) - C_\varepsilon \int_{\Sigma} w^{\alpha+1} \varphi_R^{\alpha+1}\\
			&\le C R^{N-\alpha-2}.
		\end{align*}
		
		Since $\alpha>N-2$, by letting $R\to+\infty$, we derive
		\[
		\int_{\Sigma} (|\nabla u| + |\nabla v|)^{p-2} |\nabla w|^2 w^{\alpha-1} = 0.
		\]
		This yields $u \le v + \varepsilon$ in ${\Sigma}$.
		
		Since $\varepsilon$ is arbitrary, we conclude that $u\le v$ in $\Sigma$.
	\end{proof}
	
	\begin{remark}\label{rem:ineq} It is clear from the proof that Proposition \ref{prop:wcp} still holds true if we replace the first equation of \eqref{ky} by
		\[
		-\Delta_p u \le f(u) \text{ in } \Sigma, \quad |\nabla u| \in L^\infty(\Sigma\cap\{u>\varepsilon\}) \text{ for all } \varepsilon>0,
		\]
		or replace the second equation of \eqref{ky} by
		\[
		-\Delta_p v \ge f(v) \text{ in } \Sigma, \quad |\nabla v| \in L^\infty(\Sigma).
		\]
	\end{remark}
	
	As an application of Proposition \ref{prop:wcp}, we prove Proposition \ref{prop:zero_convergence}, which provides a criterion for the uniform convergence of solutions to zero as $x_N\to 0^+$.	
	\begin{proof}[Proof of Proposition \ref{prop:zero_convergence}]
		Let $h:(0,+\infty) \to \mathbb{R}$ be a $C^1$ function such that
		\begin{align*}
			\max\{f(t), 0\} < h(t) &\quad\text{ in } (0,\rho),\\
			h(t) = \frac{c}{t^2} &\quad\text{ in } [\rho,+\infty)
		\end{align*}
		for some $c>0$.
		We set $H(t) = \int_{\rho}^{t} h(s) ds$ for $t\ge0$, then $H$ is strictly increasing in $(0,+\infty)$ and $H(t) < \int_{\rho}^{+\infty} h(s) ds = \frac{c}{\rho}$. For each $\mu \ge \frac{c}{\rho}$, we have
		\[
		\int_{0}^{+\infty} \frac{ds}{\left[\mu-H(s)\right]^\frac{1}{p}} = +\infty \text{ and } \int_{0}^{t} \frac{ds}{\left[\mu-H(s)\right]^\frac{1}{p}} < +\infty \text{ for } 0<t<+\infty,
		\]
		which is due to
		\[
		\int_{0}^{+\infty} \frac{ds}{\left[\mu-H(s)\right]^\frac{1}{p}} > \int_{\rho}^{+\infty} \frac{ds}{\mu^\frac{1}{p}} \text{ and }
		\int_{0}^{t} \frac{ds}{\left[\mu-H(s)\right]^\frac{1}{p}} < \frac{t}{\left[\mu-H(t)\right]^\frac{1}{p}}.
		\]
		Hence the formula
		\[
		\int_{0}^{w_\mu(t)} \frac{ds}{\left[\mu-H(s)\right]^\frac{1}{p}} = \left(\frac{p}{p-1}\right)^\frac{1}{p}t \quad\text{ for all } t\ge0
		\]
		uniquely define a function $w_\mu:[0,+\infty)\to\mathbb{R}$, which is a $C^2(\mathbb{R}_+) \cap C(\overline{\mathbb{R}_+})$ solution to the ODE problem
		\[
		\begin{cases}
			-(|w'|^{p-2}w')' = h(w) &\text{ in } \mathbb{R}_+,\\
			w(t) > 0, ~ w'(t) > 0 &\text{ in } \mathbb{R}_+,\\
			w(0) = 0.
		\end{cases}
		\]
		Moreover, $\lim_{\mu\to+\infty} w_\mu(t) = +\infty$ for all $t>0$.
		
		We fix some $\mu>0$ such that $w_\mu(\overline{\lambda}) > \rho$. Then we choose $\lambda_0<\overline\lambda$ satisfying $\|u\|_{L^\infty(\Sigma_{\overline\lambda})} < w_\mu(\lambda_0) < \rho$. By abuse of notation, we will write $w_\mu(x',x_N):=w_\mu(x_N)$. Then $0<w_\mu<\rho$ in $\Sigma_{\lambda_0}$ and $u < w_\mu$ on $\{x_N=\lambda_0\}$.
		
		For small $\varepsilon>0$ such that $w_\mu(\lambda_0+\varepsilon)<\rho$, we define
		\[
		w_{\mu,\varepsilon}(x) := w_\mu(x + \varepsilon e_N).
		\]
		Then
		\[
		\begin{cases}
			w_\mu(\varepsilon) < w_{\mu,\varepsilon} < w_\mu(\lambda_0+\varepsilon) < \rho & \text{ in } \Sigma_{\lambda_0},\\
			-\Delta_p w_{\mu,\varepsilon} = h(w_{\mu,\varepsilon}) > f(w_{\mu,\varepsilon}) & \text{ in } \Sigma_{\lambda_0},\\
			u \le w_{\mu,\varepsilon} & \text{ on } \partial\Sigma_{\lambda_0}.
		\end{cases}
		\]
		Now Proposition \ref{prop:wcp} implies $u \le w_{\mu,\varepsilon}$ in $\Sigma_{\lambda_0}$. Letting $\varepsilon\to0$, we have $u \le w_{\mu}$ in $\Sigma_{\lambda_0}$ and the conclusion follows from that fact that $\lim_{t\to0^+} w_{\mu}(t) = 0$.
	\end{proof}
	
	\subsection{A priori bounds for solutions}
	Motivated by \cite{MR4753083}, we prove some a priori bounds for solutions to \eqref{main}.	
	The following lemma provides an upper bound for solutions near the boundary.
	\begin{lemma}\label{lem:upper_bound}
		Let $f:(0,+\infty)\to\mathbb{R}$ be a locally Lipschitz continuous function such that $f(t) < \frac{c_0}{t^\gamma}$ for all $0<t<t_0$, where $c_0,t_0>0$, $\gamma>1$.
		Let $u \in C^{1,\alpha}_{\rm loc}(\mathbb{R}^N_+) \cap C(\overline{\mathbb{R}^N_+})$ be a solution to \eqref{main} with $\|u\|_{L^\infty(\Sigma_{\overline\lambda})} < +\infty$ for some $\overline\lambda>0$. Then
		\[
		u(x) \le C x_N^\frac{p}{\gamma+p-1} \quad\text{ in } \Sigma_{\overline\lambda}
		\]
		for some constants $C>0$. In particular, such a solution satisfies \eqref{bound_strips}.
	\end{lemma}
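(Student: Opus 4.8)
The plan is to construct an explicit barrier of the form $\overline{u}(x) = C x_N^{p/(\gamma+p-1)}$ and compare it with $u$ in the strip $\Sigma_{\overline\lambda}$ using the weak comparison principle of Proposition~\ref{prop:wcp}. First I would compute $-\Delta_p \overline{u}$ for $\overline{u}(x_N) = C x_N^{\theta}$ with $\theta = p/(\gamma+p-1) \in (0,1)$: since $\overline{u}' = C\theta x_N^{\theta-1} > 0$, we have $|\overline{u}'|^{p-2}\overline{u}' = (C\theta)^{p-1} x_N^{(\theta-1)(p-1)}$, and differentiating once more gives $-(|\overline{u}'|^{p-2}\overline{u}')' = (C\theta)^{p-1}(1-\theta)(p-1) x_N^{(\theta-1)(p-1)-1}$. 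A direct check using $\theta = p/(\gamma+p-1)$ shows $(\theta-1)(p-1) - 1 = -\gamma\theta$, so $-\Delta_p \overline{u} = K(C) x_N^{-\gamma\theta} = K(C)\, \overline{u}^{-\gamma} C^{\gamma} $, i.e. $-\Delta_p \overline{u} = \tilde{K}(C)\, \overline{u}^{-\gamma}$ where $\tilde{K}(C) = (C\theta)^{p-1}(1-\theta)(p-1) C^{\gamma} = C^{\gamma+p-1}\theta^{p-1}(1-\theta)(p-1)$, which tends to $+\infty$ as $C\to+\infty$. Hence, choosing $C$ large enough, $-\Delta_p \overline{u} \ge c_0 \overline{u}^{-\gamma} > f(\overline{u})$ wherever $0 < \overline{u} < t_0$.

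Next I would set up the comparison on a possibly smaller strip. Pick $\tilde\lambda \le \overline\lambda$ small enough that $\overline{u}(x_N) = C x_N^{\theta} < t_0$ on $\Sigma_{\tilde\lambda}$ (possible since $\theta > 0$); simultaneously, since $\|u\|_{L^\infty(\Sigma_{\overline\lambda})} < +\infty$ and $f(t) < c_0 t^{-\gamma}$ only near zero is being used for the barrier, I want $u < t_0$ on the boundary pieces — but here a subtlety arises: $u$ need not be bounded by $t_0$ on $\{x_N = \tilde\lambda\}$. To handle this, after possibly enlarging $C$, I would compare on the region where we can guarantee the boundary inequality $u \le \overline{u}$ holds. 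Concretely, on $\{x_N = 0\}$ we have $u = 0 \le \overline{u}$, and on $\{x_N = \tilde\lambda\}$ we need $C\tilde\lambda^{\theta} \ge \|u\|_{L^\infty(\Sigma_{\overline\lambda})}$, which is achieved by taking $C$ large. So fix $C$ with both $C^{\gamma+p-1}\theta^{p-1}(1-\theta)(p-1) > c_0$ and $C\tilde\lambda^{\theta} \ge \|u\|_{L^\infty(\Sigma_{\overline\lambda})}$; one must also re-examine whether $\overline{u} < t_0$ throughout $\Sigma_{\tilde\lambda}$ remains compatible with $C$ large, which forces $\tilde\lambda$ to shrink accordingly — this is fine since the two conditions on $C$ and the condition $C\tilde\lambda^\theta < t_0$ can be met by first fixing $C$ from the differential inequality, then shrinking $\tilde\lambda$.

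Then I would invoke Proposition~\ref{prop:wcp} with the roles $u \rightsquigarrow u$ and $v \rightsquigarrow \overline{u}$: we have $-\Delta_p u = f(u)$, $-\Delta_p \overline{u} \ge f(\overline{u})$ in the sense of Remark~\ref{rem:ineq} (with $|\nabla \overline{u}| \in L^\infty(\Sigma_{\tilde\lambda})$ since $\theta - 1 \le 0$ could be an issue — actually $|\overline{u}'| = C\theta x_N^{\theta-1}$ blows up as $x_N \to 0$, so I should instead use the first alternative in Remark~\ref{rem:ineq}, treating $u$ as the subsolution with $-\Delta_p u \le f(u)$ and $|\nabla u| \in L^\infty(\Sigma\cap\{u>\varepsilon\})$, which holds by interior gradient estimates away from the boundary, while $\overline{u}$ plays the role of $v$ with $v > \delta$ failing near $x_N = 0$ — so actually the cleanest route is to apply the proposition on $\Sigma_{\tilde\lambda} \setminus \Sigma_{\varepsilon}$ and let $\varepsilon \to 0$). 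On $\Sigma_{\tilde\lambda}\setminus\Sigma_\varepsilon$ we have $\overline{u} \ge C\varepsilon^\theta > 0$ bounded below, $0 < u < t_0$ (after checking $u < t_0$ there, which may again require shrinking), $u \le \overline{u}$ on the boundary, and both gradients bounded; the proposition (or its proof) yields $u \le \overline{u}$ there. Letting $\varepsilon \to 0$ gives $u \le C x_N^{\theta}$ in $\Sigma_{\tilde\lambda}$. Finally, $u \le C x_N^\theta$ in $\Sigma_{\tilde\lambda}$ together with boundedness on $\Sigma_{\overline\lambda}$ gives $u \le C' x_N^\theta$ on all of $\Sigma_{\overline\lambda}$ by adjusting the constant, and since $\theta > 0$ this forces $u(x',x_N) \to 0$ uniformly as $x_N \to 0^+$, hence \eqref{bound_strips}.

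\medskip

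The main obstacle I anticipate is the delicate juggling of the constants $C$, $\tilde\lambda$, and $\varepsilon$ so that simultaneously (a) the differential inequality $-\Delta_p \overline{u} > f(\overline{u})$ holds — which needs $C$ large, (b) $\overline{u} < t_0$ in the comparison region — which needs $\tilde\lambda$ small relative to $C$, (c) $u < t_0$ in the comparison region — which may need $\tilde\lambda$ small independently, and (d) the boundary inequality $u \le \overline{u}$ on $\{x_N = \tilde\lambda\}$ — which needs $C\tilde\lambda^\theta$ above $\|u\|_{L^\infty}$. Items (b) and (d) pull $C\tilde\lambda^\theta$ in opposite directions, so one must be careful: the resolution is that $\|u\|_{L^\infty(\Sigma_{\overline\lambda})}$ is a fixed finite number, and we only need $\|u\|_{L^\infty(\Sigma_{\overline\lambda})} < t_0$ — but that need not hold. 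The correct fix, following \cite{MR4753083}, is to not require $u < t_0$ globally but to work in a subregion near the boundary where $u$ is automatically small; since $u \in C(\overline{\mathbb{R}^N_+})$ and $u = 0$ on $\partial\mathbb{R}^N_+$, for each $x' $ there is a neighborhood where $u < t_0$, but this is not uniform in $x'$ a priori — which is precisely what we are trying to prove. This circularity is broken by choosing the barrier $\overline{u}$ to dominate $u$ on a tiny strip first via a compactness/translation argument, or more simply by invoking Proposition~\ref{prop:wcp} with a barrier that stays below $t_0$ and using that $f(t) < c_0 t^{-\gamma}$ only needs to hold where the barrier lives. The honest resolution, which I expect the author uses, is the following: the condition $f(t) < c_0 t^{-\gamma}$ on $(0,t_0)$ combined with the already-established local boundedness lets one first apply Proposition~\ref{prop:zero_convergence}-type reasoning or directly Proposition~\ref{prop:wcp} on the full strip with the ODE barrier $w_\mu$ from the proof of Proposition~\ref{prop:zero_convergence}, obtaining uniform smallness of $u$ near $x_N = 0$, and only then running the power-barrier comparison on the resulting genuinely small strip where $u < t_0$. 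I would structure the proof in that two-stage manner.
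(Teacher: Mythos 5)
Your barrier computation is correct and Proposition \ref{prop:wcp} is indeed the right tool, and you correctly isolate the main difficulty: nothing forces $u<t_0$ in $\Sigma_{\overline\lambda}$, so conditions (b) and (d) in your list conflict. But the resolution you finally settle on does not work. Proposition \ref{prop:zero_convergence}, and the ODE barrier $w_\mu$ built in its proof, require both that $f$ be strictly decreasing near zero and that $\|u\|_{L^\infty(\Sigma_{\overline\lambda})}$ already lie below the threshold of monotonicity; under the hypotheses of Lemma \ref{lem:upper_bound} the nonlinearity need not be monotone anywhere and $u$ may exceed $t_0$, so your ``stage one'' is unavailable and the scheme is circular. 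Your remark that $f(t)<c_0t^{-\gamma}$ ``only needs to hold where the barrier lives'' is also false: to treat $u$ as a subsolution of $-\Delta_p u\le h(u)$ you need $f\le h$ on the range of $u$, which goes up to $\|u\|_{L^\infty(\Sigma_{\overline\lambda})}$. The paper's fix is exactly at this point and is elementary: set $M=\max\{\|u\|_{L^\infty(\Sigma_{\overline\lambda})},t_0\}$ and enlarge the constant, choosing $c_1\ge c_0$ with $f(t)<c_1t^{-\gamma}$ for all $0<t<M$ (possible because $f$ is locally bounded on $[t_0,M]$ while $t^{-\gamma}$ is bounded below there). The comparison nonlinearity is then $c_1t^{-\gamma}$, strictly decreasing on all of $(0,+\infty)$, so Proposition \ref{prop:wcp} applies with its threshold taken above $M$; moreover both requirements on the barrier $v_s(x)=s\,w(x_N)$ --- the supersolution inequality ($s^{\gamma+p-1}\ge c_1$) and domination on $\{x_N=\overline\lambda\}$ ($s\,w(\overline\lambda)\ge M$) --- push the single parameter $s$ in the same direction, so the tension you describe between (a), (b), (d) disappears.

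The second gap is your treatment of the barrier's gradient blow-up at $x_N=0$. Running the comparison on $\Sigma_{\tilde\lambda}\setminus\Sigma_{\varepsilon}$ and letting $\varepsilon\to0$ requires the boundary inequality $u\le\overline u$ on $\{x_N=\varepsilon\}$, i.e. $u(x',\varepsilon)\le C\varepsilon^{\theta}$, which is precisely the estimate being proved, so this route is again circular. The paper instead keeps the full strip and translates the barrier: $v_{s,\varepsilon}(x)=s\,w(x_N+\varepsilon)$ is bounded below by $s\,w(\varepsilon)>0$ and has bounded gradient in $\Sigma_{\overline\lambda}$, the bottom boundary inequality is trivial since $u=0<v_{s,\varepsilon}$ on $\{x_N=0\}$, and Remark \ref{rem:ineq} (using $|\nabla u|\in L^\infty(\Sigma_{\overline\lambda}\cap\{u>\varepsilon\})$ from interior estimates) yields $u\le v_{s,\varepsilon}$ in $\Sigma_{\overline\lambda}$; letting $\varepsilon\to0$ gives the claimed bound, and \eqref{bound_strips} follows as you indicate.
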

	\begin{proof}
		Setting
		\[
		M = \max\{\|u\|_{L^\infty(\Sigma_{\overline\lambda})}, t_0\},
		\]
		then, by continuity, there exists $c_1\ge c_0$ such that $f(t) < \frac{c_1}{t^\gamma}$ for all $0<t<M$.
		Let
		\[
		w(t) := \left[\frac{(\gamma+p-1)^p}{p^{p-1}(p-1)(\gamma-1)}\right]^\frac{1}{\gamma+p-1}	t^\frac{p}{\gamma+p-1},
		\]
		then $v_s(x) := s w(x_N)$ solves $-\Delta_p v_s = \frac{s^{\gamma+p-1}}{v_s^\gamma}$ in $\mathbb{R}^N_+$. We choose sufficiently large $s$ such that $s^{\gamma+p-1} \ge c_1$ and $s w(\overline\lambda) \ge M$.		
		For small $\varepsilon>0$, we define
		\[
		v_{s,\varepsilon}(x) := v_s(x + \varepsilon e_N) = s w(x_N + \varepsilon).
		\]
		
		We have
		\[
		\begin{cases}
			-\Delta_p u = f(u) < \frac{c_1}{u^\gamma} & \text{ in } \Sigma_{\overline\lambda},\\
			-\Delta_p v_{s,\varepsilon} = \frac{s^{\gamma+p-1}}{v_{s,\varepsilon}^\gamma} \ge \frac{c_1}{v_{s,\varepsilon}^\gamma} & \text{ in } \Sigma_{\overline\lambda},\\
			0 < u < M + 1 & \text{ in } \Sigma_{\overline\lambda},\\
			s w(\varepsilon) < v_{s,\varepsilon} < s w(\overline\lambda+\varepsilon) & \text{ in } \Sigma_{\overline\lambda},\\
			u \le v_s & \text{ on } \partial\Sigma_{\overline\lambda}.
		\end{cases}
		\]
		
		We know that $|\nabla u| \in L^\infty(\Sigma_{\overline\lambda}\cap\{u>\varepsilon\}) \text{ for all } \varepsilon>0$ by the standard regularity estimate.
		Therefore, Proposition \ref{prop:wcp} and Remark \ref{rem:ineq} imply $u \le v_{s,\varepsilon}$ in $\Sigma_{\overline\lambda}$. Letting $\varepsilon\to0$, we conclude the proof.
	\end{proof}
	
	The following lemma provides a lower bound for solutions.
	\begin{lemma}\label{lem:lower_bound}
		Let $f:(0,+\infty)\to\mathbb{R}$ be a locally Lipschitz continuous function such that $f(t) > \frac{c_0}{t^\gamma}$ for all $0<t<t_0$, where $c_0,t_0>0$, $\gamma\ge0$. Let $u \in C^{1,\alpha}_{\rm loc}(\mathbb{R}^N_+) \cap C(\overline{\mathbb{R}^N_+})$ be a solution to \eqref{main}. Then
		\[
		u(x) \ge \min\{C x_N^\frac{p}{\gamma+p-1}, t_0\} \quad\text{ in } \mathbb{R}^N_+.
		\]
		for some constant $C>0$.
	\end{lemma}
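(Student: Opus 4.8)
The plan is to bound $u$ from below by a family of truncated subsolutions vanishing on the boundary of large cylinders, so that — unlike in Proposition~\ref{prop:wcp}, where the larger function must be bounded away from $0$ — no a~priori lower bound on $u$ away from $\partial\mathbb{R}^N_+$ is required; one then lets the cylinders exhaust $\mathbb{R}^N_+$. As the basic comparison function I would take the explicit power
\[
w(t):=a\,t^{\frac{p}{\gamma+p-1}},\qquad t\ge0,
\]
with $a>0$ to be fixed. A direct computation — the same identity used in the proof of Lemma~\ref{lem:upper_bound}, namely $(b-1)(p-1)-1=-\gamma b$ with $b=\frac{p}{\gamma+p-1}$ — gives $-\Delta_p w=(ab)^{p-1}(1-b)(p-1)\,t^{-\gamma b}$ and $\frac{c_0}{w^\gamma}=c_0 a^{-\gamma}t^{-\gamma b}$, so that $-\Delta_p w\le \frac{c_0}{w^\gamma}$ on $(0,+\infty)$ whenever $a^{\gamma+p-1}b^{p-1}(1-b)(p-1)\le c_0$; this holds for \emph{every} $a>0$ when $0\le\gamma\le 1$ (the left-hand side is then $\le 0$) and for all sufficiently small $a>0$ when $\gamma>1$. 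With such an $a$ fixed, $w(x_N)$ and the constant $t_0$ are both weak subsolutions of $-\Delta_p z\le \frac{c_0}{z^\gamma}$ in $\mathbb{R}^N_+$.

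Next, for $R,h>0$ let $D_{R,h}:=B_R'(0)\times(0,h)$ and let $\zeta_{R,h}$ be the positive solution of $-\Delta_p\zeta_{R,h}=c_0 t_0^{-\gamma}$ in $D_{R,h}$ with $\zeta_{R,h}=0$ on $\partial D_{R,h}$ (the $p$-torsion function with that source), and set
\[
\underline{u}_{R,h}:=\min\{\,w(x_N),\ \zeta_{R,h}(x),\ t_0\,\}.
\]
Wherever $\zeta_{R,h}$ realises this minimum one has $\zeta_{R,h}\le t_0$, hence $-\Delta_p\zeta_{R,h}=c_0 t_0^{-\gamma}\le c_0\zeta_{R,h}^{-\gamma}$; thus on the open set where it attains the minimum each of the three functions is a weak subsolution of $-\Delta_p z\le \frac{c_0}{z^\gamma}$, so $\underline{u}_{R,h}$ is itself such a subsolution in $D_{R,h}$, with $0\le\underline{u}_{R,h}\le t_0$ and $\underline{u}_{R,h}=0$ on $\partial D_{R,h}$.

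Then I would establish $\underline{u}_{R,h}\le u$ in $D_{R,h}$. Arguing by contradiction, fix a small $\varepsilon'>0$ and use $\psi:=(\underline{u}_{R,h}-u-\varepsilon')^+$ in the subtracted weak formulations. On $\{\psi>0\}$ we have $0<u<\underline{u}_{R,h}-\varepsilon'\le t_0-\varepsilon'$, so $f(u)>\frac{c_0}{u^\gamma}\ge \frac{c_0}{\underline{u}_{R,h}^\gamma}$ (the last step because $t\mapsto c_0 t^{-\gamma}$ is non-increasing; for $\gamma=0$ one uses $f(u)>c_0$ directly); moreover $\underline{u}_{R,h}>\varepsilon'$ there, which keeps $x_N$ away from $0$, and since $u$ and $\underline{u}_{R,h}$ are continuous up to $\overline{D_{R,h}}$ and vanish on $\partial D_{R,h}$, the function $\psi$ is Lipschitz with compact support in $\mathbb{R}^N_+$ and hence admissible as a test function. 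Then, using \eqref{p_ge},
\[
0\le\int_{\{\psi>0\}}\bigl(|\nabla\underline{u}_{R,h}|^{p-2}\nabla\underline{u}_{R,h}-|\nabla u|^{p-2}\nabla u,\ \nabla(\underline{u}_{R,h}-u)\bigr)\le\int_{\{\psi>0\}}\Bigl(\frac{c_0}{\underline{u}_{R,h}^\gamma}-f(u)\Bigr)\psi,
\]
and the last integrand is strictly negative on $\{\psi>0\}$; hence $|\{\psi>0\}|=0$. Letting $\varepsilon'\to0$ yields $\underline{u}_{R,h}\le u$ in $D_{R,h}$.

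Finally, letting $R\to\infty$ with $h$ fixed, $\zeta_{R,h}$ increases to the $p$-torsion function of the slab $\Sigma_h$, which depends only on $x_N$ and tends to $+\infty$ pointwise as $h\to+\infty$; consequently $\sup_{R,h}\zeta_{R,h}(x)=+\infty$ for every $x\in\mathbb{R}^N_+$. Given $x$, choosing $R,h$ so that $\zeta_{R,h}(x)>t_0$ gives $\underline{u}_{R,h}(x)=\min\{w(x_N),t_0\}$, whence $u(x)\ge\min\{a\,x_N^{\frac{p}{\gamma+p-1}},t_0\}$ — the claim with $C=a$. I expect the main obstacle to be exactly the step that motivates the whole construction: the absence of an a~priori positive lower bound for $u$ on slabs $\{x_N=\mathrm{const}\}$, which rules out a direct application of Proposition~\ref{prop:wcp}; working on bounded cylinders with a subsolution that vanishes on the \emph{entire} boundary reduces the comparison to the monotonicity of $-\Delta_p$ and of $t\mapsto c_0 t^{-\gamma}$ on the set $\{\underline{u}_{R,h}>u\}$, where $0<u<t_0$ automatically, and the singular right-hand side never causes trouble because it is only evaluated where $u>\varepsilon'$.
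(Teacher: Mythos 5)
Your explicit power subsolution $w(t)=a\,t^{p/(\gamma+p-1)}$ and the admissibility discussion for the test function are fine, but the argument breaks at the step where you declare $\underline{u}_{R,h}=\min\{w(x_N),\zeta_{R,h},t_0\}$ to be a weak subsolution of $-\Delta_p z\le c_0 z^{-\gamma}$. For the $p$-Laplacian it is the \emph{maximum} of subsolutions (and the \emph{minimum} of supersolutions) that is again a sub-(super-)solution; the minimum of subsolutions is not, and ``each branch is a subsolution on the region where it realises the minimum'' does not imply the claim. Where two branches cross transversally --- for instance on the surface $\{w(x_N)=t_0\}$, which can perfectly well meet $\{\psi>0\}$ because at this stage there is no lower bound on $u$ there (that bound is what the lemma is proving) --- the minimum has a concave kink, and $-\Delta_p\underline{u}_{R,h}$ acquires a nonnegative singular surface measure (in one dimension, $-\bigl(\min\{x,1\}\bigr)''=\delta_{1}$) which is not dominated by the bounded right-hand side $c_0\underline{u}_{R,h}^{-\gamma}\le c_0(\varepsilon')^{-\gamma}$ on $\{\psi>0\}$. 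Hence the inequality you get by ``subtracting the weak formulations'' is unjustified. Worse, the principle you are implicitly using --- a minimum of subsolutions lies below a supersolution once the ordering holds on the boundary --- is false: on $(-1,1)$ take $u\equiv 0$ and the harmonic functions $v_1=x+\tfrac12$, $v_2=-x+\tfrac12$; then $\min\{v_1,v_2\}\le u$ at $\pm1$ but $\min\{v_1,v_2\}(0)=\tfrac12>0$. So the gap is not a technicality of the weak formulation; the comparison step itself can fail for this kind of truncated barrier.

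The paper avoids truncation altogether by using a single smooth barrier that vanishes on the whole boundary of the comparison domain: with $\phi_1$ the first eigenfunction of $-\Delta_p$ in $B_1$, the function $w=s\,\phi_1^{p/(\gamma+p-1)}$ satisfies $-\Delta_p w=\alpha(x)\,w^{-\gamma}$ with $\sup\alpha\le c_0$ for $s$ small, and its rescalings $w_{x_0,R}(x)=R^{p/(\gamma+p-1)}w\bigl(\tfrac{x-x_0}{R}\bigr)$ on balls $B_R(x_0)\subset\mathbb{R}^N_+$, $R\le R_0$ (with $R_0$ fixed by $R_0^{p/(\gamma+p-1)}w(0)=t_0$), stay below $t_0$ and vanish on $\partial B_R(x_0)$; testing with $(w_{x_0,R}-u)^+$ and using \eqref{p_ge} gives $u\ge w_{x_0,R}$, and the choices $R=x_{0,N}$ or $R=R_0$ yield exactly the stated bound. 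If you want to keep your cylinder picture, you would have to replace the minimum by a genuine subsolution of the cylinder problem (or slide a one-parameter family of such barriers via the weak sweeping principle, Theorem \ref{th:wsp}, as the paper does in the related Lemma on the bound $u\ge\min\{Cx_N,t_0\}$); as written, the minimum step is a genuine gap.
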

	
	\begin{proof}
		Let $\lambda_1>0$ and $\phi_1 \in C^1(\overline{B_1})$ be the first eigenvalue and a corresponding positive eigenfunction of the $p$-Laplacian in $B_1$, namely,
		\[
		\begin{cases}
			-\Delta_p \phi_1 = \lambda_1 \phi_1^{p-1} & \text{ in } B_1,\\
			\phi_1 > 0 & \text{ in } B_1,\\
			\phi_1 = 0 & \text{ on } \partial B_1.
		\end{cases}
		\]
		
		Setting
		\[
		w = s \phi_1^\frac{p}{\gamma+p-1},
		\]
		where $s>0$ will be chosen later. Direct calculation yields that in the weak sense
		\[
		-\Delta_p w = \frac{\alpha(x)}{w^\gamma} \quad \text{ in } B_1,
		\]
		where
		\[
		\alpha(x) := s^{\gamma+p-1} \left(\frac{p}{\gamma+p-1}\right)^{p-1} \left[\frac{(\gamma-1)(p-1)}{\gamma+p-1}|\nabla\phi_1(x)|^p + \lambda_1\phi_1(x)^p\right].
		\]
		
		Now we fix $s>0$ such that $\sup_{x\in B_1}\alpha(x) \le c_0$ and hence
		\[
		-\Delta_p w \le \frac{c_0}{w^\gamma} \text{ in } B_1.
		\]
		Let $R_0>0$ be such that $R_0^\frac{p}{\gamma+p-1} w(0) = t_0$.
		
		For any $0<R\le R_0$ and $x_0 = (x_0', x_{0,N})\in\mathbb{R}^N_+$ with $x_{0,N} \ge R + \varepsilon$, where $\varepsilon$ is sufficiently small, we set
		\[
		w_{x_0,R}(x) := R^\frac{p}{\gamma+p-1}w\left(\frac{x-x_0}{R}\right) \quad\text{ in } B_R(x_0).
		\]
		Then
		\[
		w_{x_0,R} \le t_0 \quad\text{ and }\quad -\Delta_p w_{x_0,R} \le \frac{c_0}{w_{x_0,R}^\gamma} \quad\text{ in } B_R(x_0).
		\]
		
		On the other hand, since $w_{x_0,R}=0 < u$ on $\partial B_R(x_0)$, we can use $(w_{x_0,R}-u)^+ \chi_{B_R(x_0)}$ as a test function in
		\[
		-\Delta_p u = f(u)
		\quad\text{ and }\quad
		-\Delta_p w_{x_0,R} \le \frac{c_0}{w_{x_0,R}^\gamma}
		\]
		to obtain
		\begin{align*}
			&\int_{B_R(x_0)} (|\nabla w_{x_0,R}|^{p-2}\nabla w_{x_0,R} - |\nabla u|^{p-2}\nabla u, \nabla (w_{x_0,R}-u)^+) \\
			&\le \int_{B_R(x_0)} \left(\frac{c_0}{w_{x_0,R}^\gamma} - f(u)\right) (w_{x_0,R}-u)^+.
		\end{align*}
		
		In $B_R(x_0)\cap\{w_{x_0,R} > u\}$ we have $f(u) \ge \frac{c_0}{u^\gamma}$. Hence
		\begin{align*}
			&\int_{B_R(x_0)} (|\nabla w_{x_0,R}|^{p-2}\nabla w_{x_0,R} - |\nabla u|^{p-2}\nabla u, \nabla (w_{x_0,R}-u)^+) \\
			&\le \int_{B_R(x_0)} \left(\frac{c_0}{w_{x_0,R}^\gamma} - \frac{c_0}{u^\gamma}\right) (w_{x_0,R}-u)^+ \le 0.
		\end{align*}
		
		By \eqref{p_ge}, this implies
		\[
		\int_{B_R(x_0)} (|\nabla w_{x_0,R}| + |\nabla u|)^{p-2} |\nabla (w_{x_0,R}-u)^+|^2 \le 0.
		\]
		Hence $u \ge w_{x_0,R}$ in $B_R(x_0)$ with $x_{0,N} \ge R + \varepsilon$. Since $\varepsilon>0$ is arbitrary, we deduce
		\[
		u \ge w_{x_0,R} \text{ in } B_R(x_0) \text{ for all } 0<R\le R_0 \text{ and } x_0 \in \mathbb{R}^N_+ \text{ with } x_{0,N} \ge R.
		\]
		
		In particular, if $x_{0,N} = R < R_0$, then
		\[
		u(x_0) \ge w_{x_0,R}(x_0) = w(0)R^\frac{p}{\gamma+p-1} = w(0) x_{0,N}^\frac{p}{\gamma+p-1}.
		\]
		If $x_{0,N} \ge R = R_0$, then
		\[
		u(x_0) \ge w_{x_0,R}(x_0) = w(0)R_0^\frac{p}{\gamma+p-1} = t_0.
		\]
		
		The conclusion follows from the fact that $x_0$ is chosen arbitrarily in $\mathbb{R}^N_+$.
	\end{proof}
	We still have a lower bound under weaker assumptions on $f$.
	\begin{lemma}\label{lem:positive}
		Let $f:(0,+\infty)\to\mathbb{R}$ be a locally Lipschitz continuous function such that $f(t) > c_0t^{p-1}$ for all $0<t<t_0$, where $c_0,t_0>0$. Let $u \in C^{1,\alpha}_{\rm loc}(\mathbb{R}^N_+) \cap C(\overline{\mathbb{R}^N_+})$ be a solution to \eqref{main}.
		Then
		\[
		u(x) \ge \min\{C x_N, t_0\} \quad\text{ in } \mathbb{R}^N_+.
		\]
		for some constant $C>0$.
	\end{lemma}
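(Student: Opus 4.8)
The plan is to mimic the structure of the proof of Lemma \ref{lem:lower_bound}, replacing the singular subsolution $s\phi_1^{p/(\gamma+p-1)}$ by a linear one built from the first eigenfunction of $-\Delta_p$ on a ball. Concretely, let $\lambda_1>0$ and $\phi_1\in C^1(\overline{B_1})$ be the first eigenvalue and a positive first eigenfunction of $-\Delta_p$ on $B_1$, so that $-\Delta_p\phi_1=\lambda_1\phi_1^{p-1}$ in $B_1$ with $\phi_1=0$ on $\partial B_1$. Set $w=s\phi_1$ with $s>0$ to be fixed; then $-\Delta_p w=\lambda_1 s^{p-1}\phi_1^{p-1}=\lambda_1 s^{-(p-1)}\,|w|^{p-2}w$ weakly in $B_1$, so choosing $s$ small enough that $\lambda_1 s^{1-p}\cdot(s\max_{B_1}\phi_1)^{p-1}\le c_0$ — equivalently $s$ so small that $\lambda_1(\max_{B_1}\phi_1)^{p-1}\le c_0$, which actually forces a smallness condition independent of $s$, so instead fix $s$ so that $s\,\max_{B_1}\phi_1\le t_0$ and $\lambda_1 s^{1-p}(s\,\phi_1)^{p-1}=\lambda_1(\max\phi_1)^{p-1}$... — one sees the cleaner route is to note $-\Delta_p w = \lambda_1 s^{1-p} w^{p-1}\le \lambda_1 s^{1-p}(\max_{B_1}\phi_1\cdot s)^{p-1}\,w^{p-1}/w^{p-1}$; rather, simply observe $-\Delta_p w=\lambda_1 w^{p-1}/s^{p-1}\cdot s^{p-1}=\lambda_1 w^{p-1}\cdot(s\phi_1/w)^{0}$, and since $w\le s\max\phi_1$ we get $-\Delta_p w=\lambda_1 s^{1-p}w^{p-1}\le c_0 w^{p-1}$ provided $\lambda_1 s^{1-p}(s\max_{B_1}\phi_1)^{p-1}\le c_0 (s\min\ldots)$. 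The clean statement is: \emph{fix $s>0$ small enough that both $s\max_{B_1}\phi_1\le t_0$ and $\lambda_1 w^{p-1}\le c_0 w^{p-1}$ on $\{w<t_0\}$}, i.e. just take $s$ with $\lambda_1 s^{1-p}\cdot 1\le c_0$ after normalizing $\phi_1$ appropriately; in any case a single rescaling of $\phi_1$ yields a function $w\in C^1(\overline{B_1})$ with $w>0$ in $B_1$, $w=0$ on $\partial B_1$, $w\le t_0$, and $-\Delta_p w\le c_0 w^{p-1}$ weakly in $B_1$. Let $R_0>0$ be determined by $R_0\,w(0)=t_0$ (note the scaling exponent is now $1$ rather than $p/(\gamma+p-1)$, since $w$ is homogeneous of degree... no, $w$ itself is not a power; rather, for balls of radius $R$ the correct rescaling is $w_{x_0,R}(x):=R\,w((x-x_0)/R)$, which satisfies $-\Delta_p w_{x_0,R}=R^{1-p}(-\Delta_p w)((x-x_0)/R)\le R^{1-p}c_0 w(\cdot)^{p-1}=c_0 R^{-(p-1)}\cdot(w_{x_0,R}/R)^{p-1}=c_0 R^{-2(p-1)}... $ — here one must be slightly careful, and the correct homogeneity that makes the inequality $-\Delta_p w_{x_0,R}\le c_0 w_{x_0,R}^{p-1}$ scale-invariant is precisely the linear one $w_{x_0,R}(x)=R\,w((x-x_0)/R)$, because then $-\Delta_p w_{x_0,R}=R^{p-1}\cdot R^{-p}(-\Delta_p w)(\cdot)\le R^{-1}c_0 w^{p-1}=c_0 R^{-1}(w_{x_0,R}/R)^{p-1}=c_0 R^{-p}w_{x_0,R}^{p-1}$, which for $R\le 1$ gives $-\Delta_p w_{x_0,R}\le c_0 w_{x_0,R}^{p-1}$ only if $R^{-p}\le 1$, hence we restrict to $R\le\min\{R_0,1\}$).

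With these subsolutions in hand, for any admissible radius $R$ and any $x_0=(x_0',x_{0,N})\in\mathbb{R}^N_+$ with $x_{0,N}\ge R$, we have $B_R(x_0)\subset\mathbb{R}^N_+$, so $w_{x_0,R}=0<u$ on $\partial B_R(x_0)$; using $(w_{x_0,R}-u)^+\chi_{B_R(x_0)}$ as a test function in the equation for $u$ and in the differential inequality for $w_{x_0,R}$, subtracting, and noting that on $\{w_{x_0,R}>u\}$ one has $u<w_{x_0,R}\le t_0$ hence $f(u)>c_0 u^{p-1}>c_0 w_{x_0,R}^{p-1}$... wait, the inequality $u<w_{x_0,R}$ gives $u^{p-1}<w_{x_0,R}^{p-1}$, so $f(u)>c_0 u^{p-1}$ but we need to compare with $c_0 w_{x_0,R}^{p-1}$; what we actually get is
\[
\int_{B_R(x_0)}\bigl(|\nabla w_{x_0,R}|^{p-2}\nabla w_{x_0,R}-|\nabla u|^{p-2}\nabla u,\nabla(w_{x_0,R}-u)^+\bigr)\le\int_{B_R(x_0)}\bigl(c_0 w_{x_0,R}^{p-1}-f(u)\bigr)(w_{x_0,R}-u)^+,
\]
and on $\{w_{x_0,R}>u\}$ we have $c_0 w_{x_0,R}^{p-1}-f(u)<c_0 w_{x_0,R}^{p-1}-c_0 u^{p-1}$, whose sign is \emph{wrong} (it is positive). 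So the monotone-operator argument of Lemma \ref{lem:lower_bound} does not close directly; the fix is to instead compare $f(u)$ with $c_0 w_{x_0,R}^{p-1}$ \emph{before} testing, by choosing $w$ so small that $c_0 w^{p-1}\le f(t)$ for all $0<t\le \max_{B_1}w$, which holds automatically from the hypothesis $f(t)>c_0 t^{p-1}$ together with $w\le$ the argument, but only pointwise in $t=$ value; the genuinely correct move, as in Lemma \ref{lem:lower_bound}, is to observe that on $\{w_{x_0,R}>u\}$ we have $u<w_{x_0,R}\le t_0$ so $f(u)\ge c_0 u^{p-1}$, and then bound $c_0 w_{x_0,R}^{p-1}-f(u)\le c_0 w_{x_0,R}^{p-1}-c_0 u^{p-1}$ which is $\ge 0$, giving no information — unless we strengthen the construction so that $-\Delta_p w_{x_0,R}\le c_0 u^{p-1}$ on the contact set, i.e. we want $-\Delta_p w_{x_0,R}\le f(w_{x_0,R})$ directly. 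The honest plan is therefore: fix $s$ so small that $s\,\max_{B_1}\phi_1<t_0$, whence for the relevant $R\le 1$ one has $0<w_{x_0,R}<t_0$, and then $-\Delta_p w_{x_0,R}\le c_0 w_{x_0,R}^{p-1}<f(w_{x_0,R})$ in $B_R(x_0)$; now $w_{x_0,R}$ is a strict subsolution of the same equation $-\Delta_p v=f(v)$ that $u$ solves, with $w_{x_0,R}\le u$ on the boundary, so the \emph{weak comparison principle} argument — testing with $(w_{x_0,R}-u)^+$, and using $f(w_{x_0,R})-f(u)\le L(w_{x_0,R}-u)$ on the contact set via local Lipschitz continuity of $f$ on the compact interval $[0,t_0]$, combined with the coercivity inequality \eqref{p_ge} — yields, after a Poincaré-type estimate on the small ball $B_R(x_0)$ whose radius $R$ is chosen small relative to $1/L$, that $(w_{x_0,R}-u)^+\equiv 0$. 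This is the standard weak comparison on small balls and is where the one genuine argument lies.

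Concluding as in Lemma \ref{lem:lower_bound}: having shown $u\ge w_{x_0,R}$ in $B_R(x_0)$ for every $0<R\le R_1:=\min\{R_0,1,R_L\}$ (where $R_L$ is the radius threshold from the weak comparison step) and every $x_0\in\mathbb{R}^N_+$ with $x_{0,N}\ge R$, we evaluate at the center: if $x_{0,N}=R<R_1$ then $u(x_0)\ge w_{x_0,R}(x_0)=R\,w(0)=w(0)\,x_{0,N}$; if $x_{0,N}\ge R=R_1$ then $u(x_0)\ge R_1\,w(0)$, and choosing $R_1$ (or shrinking it) so that $R_1 w(0)\le t_0$ gives $u(x_0)\ge\min\{w(0)\,x_{0,N},\,R_1 w(0)\}\ge\min\{C x_{0,N},t_0\}$ with $C:=\min\{w(0),\,t_0\}$ after absorbing constants. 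Since $x_0\in\mathbb{R}^N_+$ was arbitrary, this is the claimed bound. The main obstacle, and the only step requiring care, is the comparison on small balls: unlike Lemma \ref{lem:lower_bound}, where the competitor solves a pure power equation and the nonlinearities subtract with a favorable sign, here $f$ is only locally Lipschitz, so one must localize to balls small enough that the Lipschitz constant of $f$ on $[0,t_0]$ is controlled by the first eigenvalue of $-\Delta_p$ on $B_R$ (which blows up like $R^{-p}$), making the bilinear form coercive; this forces the extra radius restriction $R\le R_L$ but is harmless for the final pointwise conclusion.
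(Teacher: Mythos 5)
Your plan has a decisive structural problem before the comparison step is even reached: the competitor you propose cannot exist on the balls you need. Since $-\Delta_p(s\phi_1)=\lambda_1 (s\phi_1)^{p-1}$ for \emph{every} $s>0$, no choice of amplitude makes $s\phi_1$ a subsolution of $-\Delta_p w\le c_0 w^{p-1}$ on $B_1$ unless $\lambda_1(B_1)\le c_0$; and after your linear rescaling $w_{x_0,R}(x)=R\,w((x-x_0)/R)$ one gets $-\Delta_p w_{x_0,R}=\lambda_1 R^{-p}w_{x_0,R}^{p-1}$, so the subsolution property forces $R\ge(\lambda_1/c_0)^{1/p}$, i.e.\ \emph{large} balls, the opposite of the restriction $R\le R_L$ your comparison step requires (your line ``$R^{-p}\le 1$, hence we restrict to $R\le\min\{R_0,1\}$'' has the inequality backwards). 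Indeed, on any ball with $\lambda_1(B_R)>c_0$ there is no nontrivial nonnegative weak subsolution of $-\Delta_p w\le c_0w^{p-1}$ vanishing on the boundary (test with $w$ itself and compare with the variational characterization of $\lambda_1$), so the small-ball scheme cannot start. Moreover, your substitute comparison argument is itself unavailable in this setting: $f$ is only locally Lipschitz on $(0,+\infty)$ and is allowed to be singular at $0$, so there is no Lipschitz constant on $[0,t_0]$; on the contact set $u$ can be arbitrarily close to $0$ (the balls used in your final step are tangent to $\{x_N=0\}$, where $u=0$), so no uniform one-sided Lipschitz bound $f(w_{x_0,R})-f(u)\le L(w_{x_0,R}-u)$ holds; and even granting such a bound, absorbing $L\int((w-u)^+)^2$ against the degenerate weighted form produced by \eqref{p_ge} is not a routine Poincar\'e argument when $p\ne2$ — it needs gradient bounds that are not uniform here, since $|\nabla u|$ may blow up at $\partial\mathbb{R}^N_+$ (cf.\ Theorem \ref{th:monotonicity2}). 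Finally, even if all of this were repaired, your scaled-down competitor has amplitude $R_1\,w(0)$, so you would only obtain $u\ge\min\{Cx_N,c\}$ with possibly $c<t_0$, which is weaker than the stated bound (the precise threshold $t_0$ is what gets used later).

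The paper resolves exactly this tension differently: it keeps the amplitude at $t_0$ (normalizing $\phi_1(0)=t_0$) and rescales only the domain to a \emph{fixed} radius $R$ chosen so that $-\Delta_p\phi_R=\tfrac{c_0}{2}\phi_R^{p-1}$, the factor $\tfrac12$ creating a strict gap $\gamma>0$ below $f$ thanks to $f(t)>c_0t^{p-1}$. Then, for each center $x_0$ with $x_{0,N}>R$, instead of a small-domain weak comparison it applies the weak sweeping principle (Theorem \ref{th:wsp}) to the one-parameter family $s\phi_R^{x_0}$, $s\in[s_0,1]$, on the slightly shrunken ball $B_{R-\varepsilon}$, where $u$ is bounded away from zero so the local Lipschitz continuity of $f$ on a compact subinterval of $(0,+\infty)$ suffices; the tangent case $x_{0,N}=R$ is recovered by continuity, and the linear growth near the boundary comes from $\phi_R'(R)<0$. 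If you want to salvage your write-up, this sweeping-in-$s$ mechanism (not comparison on small balls) is the missing ingredient.
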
	
	A weaker result was proved in \cite{10.1007/s11118-024-10157-1} exploiting the weak sweeping principle (see also \cite[Lemma 3]{MR3118616} for the case that $p>2$ and $f$ is positive).
	More precisely, Lemma 10 in \cite{10.1007/s11118-024-10157-1} is stated for nonlinearity $f$ that is continuous at zero and the conclusion there does not provide an explicit lower bound for $u$. To get a stronger result, we still use the weak sweeping principle, but in a different way.
	\begin{proof}[Proof of Lemma \ref{lem:positive}]
		Let $\lambda_1>0$ and $\phi_1 \in C^1(\overline{B_1})$ be the first eigenvalue and the corresponding positive eigenfunction of the $p$-Laplacian in $B_1$ such that $\phi_1(0)=t_0$. We take $R=\sqrt{\frac{2\lambda_1}{c_0}}$ and set $\phi_R(x) = \phi_1\left(\frac{x}{R}\right)$, then
		\[
		\begin{cases}
			-\Delta_p \phi_R = \frac{c_0}{2} \phi_R^{p-1} & \text{ in } B_R,\\
			\phi_R > 0 & \text{ in } B_R,\\
			\phi_R = 0 & \text{ on } \partial B_R.
		\end{cases}
		\]
		Since $\phi_R$ is radially symmetric and by abuse of notation, we may write $\phi_R(x)=\phi_R(|x|)$.
		For each $x_0\in\mathbb{R}^N_+\setminus\Sigma_R$ we set
		\[
		\phi_R^{x_0}(x) = \phi_R(x - x_0) \quad\text{ for } x \in B_R(x_0).
		\]
		We will show that
		\begin{equation}\label{ky2}
			u \ge \phi_R^{x_0} \text{ in } B_R(x_0) \quad\text{ for every } x_0\in\mathbb{R}^N_+\setminus\Sigma_R.
		\end{equation}
		To this end, we let any $x_0:=(x_0', x_{0,N})\in\mathbb{R}^N_+\setminus\Sigma_R$.
		
		We only consider the case $x_{0,N}>R$ since the case $x_{0,N}=R$ can be obtained by continuity. Let $s_0\in(0,1)$ be such that $\delta:=\min_{\overline{B_R(x_0)}} u > s_0\phi_R^{x_0}$ in $B_R(x_0)$ and let $\varepsilon>0$ be such that $\phi_R(R-\varepsilon) < \frac{\delta}{2}$. We denote $\tilde{\phi}_s=s\phi_R^{x_0}$. Then for all $s \in [s_0, 1]$, we have
		\[
		\begin{cases}
			-\Delta_p u = f(u) & \text{ in } B_{R-\varepsilon},\\
			-\Delta_p \tilde{\phi}_s = \frac{c_0}{2} \tilde{\phi}_s^{p-1} \le f(s\phi_R^{x_0}) - \gamma & \text{ in } B_{R-\varepsilon},\\
			u \ge \tilde{\phi}_s + \frac{\delta}{2} & \text{ on } \partial B_{R-\varepsilon},
		\end{cases}
		\]
		where
		\[
		\gamma=\frac{1}{2}\min_{[s_0\phi_R(R-\varepsilon),t_0]} f>0.
		\]
		Moreover, $u > \tilde{\phi}_{s_0}$ in $B_{R-\varepsilon}$. Thus we can apply the weak sweeping principle (Theorem \ref{th:wsp}) to deduce that $u \ge \tilde{\phi}_s$ in $B_{R-\varepsilon}$ for all $s \in [s_0, 1]$. In particular, $u \ge \tilde{\phi}_1 = \phi_R^{x_0}$ in $B_{R-\varepsilon}$. Since $\varepsilon$ is arbitrary, \eqref{ky2} must hold. This implies
		\[
		u(x) \ge \begin{cases}
			\phi_R(R-x_N) &\text{ if } x_N< R,\\
			\phi_R(0) &\text{ if } x_N\ge R.
		\end{cases}
		\]
		The conclusion follows immediately from the fact that $\phi_R'(R)<0$ and $\phi_R(0)=t_0$.
	\end{proof}
	
	\section{Monotonicity of solutions}\label{sect3}
	
	For $\lambda>0$, we define
	\[
	u_\lambda (x', x_N) := u(x',2\lambda-x_N),
	\]
	which is obtained by reflecting $u$ with respect to the hyperplane $T_\lambda := \{(x',x_N)\in\mathbb{R}^N \mid x_N=\lambda\}$. The following proposition allows us to initiate the moving plane procedure.
	
	\begin{proposition}\label{prop:mn} Under the assumptions of Theorem \ref{th:monotonicity}, we have
		\[
		u\leq u_\lambda \text{ in } \Sigma_\lambda \quad\text{ for all } 0<\lambda\leq\overline\lambda,
		\]
		where $\overline\lambda>0$.
	\end{proposition}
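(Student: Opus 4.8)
The plan is to obtain this from the weak comparison principle for strips, Proposition \ref{prop:wcp}, applied on $\Sigma=\Sigma_\lambda$ with the comparison function $v=u_\lambda$. First I note that $u_\lambda$ is an admissible competitor: since $u$ solves \eqref{main} in the whole of $\mathbb{R}^N_+$ and the reflection $x_N\mapsto 2\lambda-x_N$ maps $\overline{\Sigma_\lambda}$ into the closed slab $\{\lambda\le x_N\le 2\lambda\}\subset\mathbb{R}^N_+$, the function $u_\lambda$ belongs to $C^{1,\alpha}_{\rm loc}(\overline{\Sigma_\lambda})$ and solves $-\Delta_p u_\lambda=f(u_\lambda)$ in $\Sigma_\lambda$, because both $\Delta_p$ and the (autonomous) nonlinearity $f$ are invariant under this reflection. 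The boundary ordering $u\le u_\lambda$ on $\partial\Sigma_\lambda$ is also immediate: on $\{x_N=0\}$ one has $u=0<u_\lambda$ since $u>0$ in $\mathbb{R}^N_+$, and on $T_\lambda=\{x_N=\lambda\}$ one has $u=u_\lambda$ by definition of the reflection. Hence everything reduces to verifying, for $\lambda$ small, the two interior conditions required by Proposition \ref{prop:wcp}: namely $0<u<t_0$ in $\Sigma_\lambda$ and $u_\lambda>\delta$ in $\Sigma_\lambda$ for some $\delta>0$.

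The upper bound is supplied by the hypothesis \eqref{bound_strips}: since $u(x',x_N)\to 0$ as $x_N\to0^+$ uniformly in $x'$, there is $\overline\lambda>0$ with $\|u\|_{L^\infty(\Sigma_{\overline\lambda})}<t_0$, so $0<u<t_0$ in $\Sigma_\lambda\subset\Sigma_{\overline\lambda}$ for every $0<\lambda\le\overline\lambda$. The positive lower bound on $u_\lambda$ is where I would invoke assumption (i): because $\lim_{t\to0^+}f(t)>0$ while $t^{p-1}\to0$ as $t\to0^+$, there exist $c_0,t_1>0$ with $f(t)>c_0 t^{p-1}$ for all $0<t<t_1$, and then Lemma \ref{lem:positive} gives $u(x)\ge\min\{Cx_N,t_1\}$ in $\mathbb{R}^N_+$ for some $C>0$. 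Consequently, for $x=(x',x_N)\in\Sigma_\lambda$ we have $u_\lambda(x',x_N)=u(x',2\lambda-x_N)$ with $2\lambda-x_N\ge\lambda$, whence $u_\lambda\ge\min\{C\lambda,t_1\}=:\delta>0$ in $\Sigma_\lambda$. With all the hypotheses of Proposition \ref{prop:wcp} in place (recall that (ii) is exactly the monotonicity hypothesis of that proposition), we conclude $u\le u_\lambda$ in $\Sigma_\lambda$ for every $0<\lambda\le\overline\lambda$.

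The substantive work of this proposition is thus already packaged into the two preliminary results: Proposition \ref{prop:wcp}, which absorbs the singular behaviour of $f$ near $0$, and Lemma \ref{lem:positive}, which produces the uniform positive lower bound that makes $u_\lambda$ an admissible comparison function. The only delicate point in the argument is the bookkeeping needed to guarantee that the ``$u<t_0$'' and ``$u_\lambda>\delta$'' conditions hold simultaneously on one and the same thin strip $\Sigma_\lambda$; no narrow-domain maximum principle is required at this stage, in contrast to the classical moving plane initiation. I also remark that assumption (iii) of Theorem \ref{th:monotonicity} plays no role here: it is only needed later, when the plane is moved beyond $\overline\lambda$ and strong comparison principles are applied on $\mathbb{R}^N_+\setminus\Sigma_{\overline\lambda}$.
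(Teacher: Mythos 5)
Your proof is correct and follows essentially the same route as the paper: apply the weak comparison principle for strips (Proposition \ref{prop:wcp}) with $v=u_\lambda$ on $\Sigma_\lambda$, using \eqref{bound_strips} to get $0<u<t_0$ in a thin strip and an a priori lower bound to make $u_\lambda$ bounded away from zero there. The only (immaterial) difference is that you obtain the lower bound $u_\lambda\ge\min\{C\lambda,t_1\}$ from Lemma \ref{lem:positive}, whereas the paper invokes Lemma \ref{lem:lower_bound} with $\gamma=0$; both are legitimate consequences of assumption (i).
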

	
	\begin{proof}
		Let $\overline\lambda>0$ be such that $\|u\|_{L^\infty(\Sigma_{\overline\lambda})} < t_0$.
		Using Lemma \ref{lem:lower_bound} with $\gamma=0$, one may check that for $0<\lambda\leq\overline\lambda$,
		\[
		\begin{cases}
			-\Delta_p u = f(u) &\text{ in } \Sigma_\lambda,\\
			-\Delta_p u_\lambda = f(u_\lambda) &\text{ in } \Sigma_\lambda,\\
			0<u<t_0 &\text{ in } \Sigma_\lambda,\\
			\min\{C \lambda^\frac{p}{p-1}, t_1\} \le u_\lambda\le\|u\|_{L^\infty(\Sigma_{2\lambda})} &\text{ in } \Sigma_\lambda,\\
			u \le u_\lambda &\text{ on } \partial\Sigma_\lambda,
		\end{cases}
		\]
		where $t_1>0$.
		Hence, Proposition \ref{prop:wcp} yields $u\leq u_\lambda$ in $\Sigma_\lambda$ for all $0<\lambda\leq\overline\lambda$.
	\end{proof}
	
	\begin{proof}[Proof of Theorem \ref{th:monotonicity}]	
		Due to Proposition \ref{prop:mn}, the set
		\[
		\Lambda:=\left\{\lambda>0 \mid u\leq u_\mu \text{ in } \Sigma_\mu \text{ for all } 0<\mu\leq \lambda\right\}
		\]
		is nonempty. Thus, we can define
		\begin{equation}\label{lambda}
			\lambda_0=\sup\Lambda.
		\end{equation}
		To obtain the monotonicity of $u$, it suffices to show that $\lambda_0=+\infty$. By contradiction arguments, we assume $\lambda_0<+\infty$. Then $u\leq u_{\lambda_0}$ in $\Sigma_{\lambda_0}$. We can reach a contradiction by showing that for some small $\varepsilon>0$ we have
		\[
		u\leq u_\lambda \text{ in } \Sigma_\lambda \quad\text{ for all } \lambda_0 < \lambda<\lambda_0 + \varepsilon.
		\]
		Due to Lemma \ref{lem:lower_bound} (with $\gamma=0$), there exist $\tilde{\lambda}, \tilde{\delta}>0$ small such that
		\[
		u+\tilde{\delta}\leq u_\lambda \text{ in } \Sigma_{\tilde{\lambda}} \quad\text{ for all } \lambda>\lambda_0.
		\]
		Therefore, we only need to show that
		\begin{equation}\label{gf}
			u\leq u_\lambda \text{ in } \Sigma_\lambda\setminus\Sigma_{\tilde{\lambda}} \quad\text{ for all } \lambda_0 < \lambda<\lambda_0 + \varepsilon
		\end{equation}
		for some $\varepsilon\in(0,1)$.
		By Lemma \ref{lem:lower_bound} again, we know that
		\[
		\min\{C \tilde{\lambda}^\frac{p}{p-1}, t_1\} \le u, u_\lambda \le \|u\|_{L^\infty(\Sigma_{2\lambda})} \quad\text{ in } \Sigma_\lambda\setminus\Sigma_{\tilde{\lambda}}.
		\]
		Hence $f(u)$ and $f(u_\lambda)$ are bounded in $\Sigma_\lambda\setminus\Sigma_{\tilde{\lambda}}$. Therefore, by standard gradient elliptic estimates, we have $|\nabla u|, |\nabla u_\lambda| \in L^\infty(\Sigma_\lambda\setminus\Sigma_{\tilde{\lambda}})$ for every $\lambda>0$. Hence, we can repeat the techniques in \cite{MR3303939,MR2886112,MR3752525,MR3118616,MR4439897}, which are based on various comparison principles and compactness arguments for problems with a regular nonlinearity, to prove \eqref{gf}. More precisely, if $f$ is positive and $1<p<2$, we use the arguments in \cite{MR3303939}. If $f$ is positive and $p>2$, we follow the ones in \cite{MR3118616}. When $f$ is sign-changing and \( \frac{2N+2}{N+2}<p<2 \), we argue as in \cite{MR4439897} (see also \cite{MR4750390,10.1007/s11118-024-10157-1} for simplified arguments).
		
		The details, therefore, will be omitted.
	\end{proof}
	
	Next, we prove Theorem \ref{th:monotonicity2}. Our proof is motivated by the scaling technique in \cite{MR3459013}.
	\begin{proof}[Proof of Theorem \ref{th:monotonicity2}]
		Since $g:[0,+\infty)$ is a locally Lipschitz continuous, there exist $t_0,c_1,c_2>0$ such that
		\[
		\frac{c_1}{t^\gamma}<f(t)<\frac{c_2}{t^\gamma}\quad\text{ in } (0,t_0).
		\]
		Hence Lemmas \ref{lem:upper_bound} and \ref{lem:lower_bound} imply the existence of $\lambda_0,c,C>0$ such that
		\begin{equation}\label{bounds}
			c x_N^\frac{p}{\gamma+p-1} \le u(x) \le C x_N^\frac{p}{\gamma+p-1} \quad\text{ in } \Sigma_{\lambda_0}.
		\end{equation} 
		
		Let any $A>a>0$ and any positive sequence $(\varepsilon_n)$ such that $\varepsilon_n\to0$ as $n\to\infty$. We define
		\[
		w_n(x) := \varepsilon_n^{-\frac{p}{\gamma+p-1}} u(\varepsilon_n x) \quad\text{ for } x \in \mathbb{R}^N_+.
		\]
		For $n$ sufficiently large, we deduce from \eqref{bounds}
		\begin{equation}\label{wn_bound}
			c a^\frac{p}{\gamma+p-1} \le c x_N^\frac{p}{\gamma+p-1} \le w_n(x) \le C x_N^\frac{p}{\gamma+p-1} \le C A^\frac{p}{\gamma+p-1} \quad\text{ in } \Sigma_A\setminus \Sigma_a.
		\end{equation}
		In particular, $(w_n)$ is uniformly bounded in $L^\infty(\Sigma_A\setminus \Sigma_a)$ and it solves
		\begin{equation}\label{wn}
			-\Delta_p w_n = \frac{1}{w_n^\gamma} + \varepsilon_n^\frac{\gamma p}{\gamma+p-1} g(\varepsilon_n^\frac{p}{\gamma+p-1} w_n) \quad \text{ in } \mathbb{R}^N_+.
		\end{equation}
		By the standard regularity \cite{MR1814364}, $(w_n)$ is also uniformly bounded in $L^\infty(\Sigma_A\setminus \Sigma_a)$ and in $C^{1,\alpha}(\overline{\Sigma_A\setminus \Sigma_a})$, for $0 < \alpha < 1$. Since
		\[
		|\nabla w_n(x)| = \varepsilon_n^{\frac{\gamma-1}{\gamma+p-1}} |\nabla u(\varepsilon_n x)| \ge \varepsilon_n^{\frac{\gamma-1}{\gamma+p-1}} \frac{\partial u(\varepsilon_n x)}{\partial \eta},
		\]
		for $\varepsilon_n$ sufficiently small, we get the
		estimate from above in \eqref{gradient_blowup}.
		
		Now we prove the estimate from below. Suppose by contradiction that there exist $\beta>0$, a sequence of normal vectors $\eta_n\in \mathbb{S}^{N-1}_+$ with $(\eta_n,e_N) \ge \beta$ and a sequence of points $x_n=(x_n', x_{n,N})\in\mathbb{R}^N_+$ such that
		\begin{equation}\label{cassump}
			x_{n,N}^{\frac{\gamma-1}{\gamma+p-1}} \frac{\partial u(x_n)}{\partial \eta_n} \to 0 \text{ and } x_{n,N} \to 0 \quad\text{ as } n\to\infty.
		\end{equation}
		Passing to a subsequence, we may assume $\eta_n\to\eta\in \mathbb{S}^{N-1}_+$ with $(\eta,e_N) \ge \beta$ as $n\to\infty$.
		We define $w_n$ as above with $\varepsilon_n=x_{n,N}$ and $\tilde w_n(x',x_N)=w_n(x'+\varepsilon_n^{-1}x'_n,x_N)$, namely,
		\[
		\tilde w_n(x) := x_{n,N}^{-\frac{p}{\gamma+p-1}} u(x_{n,N} x'+x_n', x_{n,N} x_N) \quad\text{ for } x \in \mathbb{R}^N_+.
		\]
		Then \eqref{wn_bound} and \eqref{wn} still hold for $\tilde w_n$.
		Moreover, $(\tilde w_n)$ is uniformly bounded in $C^{1,\alpha}(\overline{\Sigma_A\setminus \Sigma_a})$. Hence, up to a subsequence, we have
		\[
		\tilde w_n \to w_{a,A} \quad\text{ in } C^{1,\alpha'}_{\rm loc}(\overline{\Sigma_A\setminus \Sigma_a}),
		\]
		where $0<\alpha'<\alpha$.
		Moreover, passing \eqref{wn} to the limit, we get
		\[
		-\Delta w_{a,A} = \frac{1}{w_{a,A}^\gamma} \quad\text{ in } \Sigma_A\setminus \Sigma_a.
		\]
		Now we take $a = \frac{1}{j}$ and $A = j$, for large $j \in \mathbb{N}$ and we construct $w_{\frac{1}{j},j}$ as above. For $j\to\infty$, using a standard diagonal process, we can construct a limiting profile $w_\infty \in C^{1,\alpha'}_{\rm loc}(\mathbb{R}^N_+)$ so that
		\[
		-\Delta w_\infty = \frac{1}{w_\infty^\gamma} \quad\text{ in } \mathbb{R}^N_+
		\]
		and $w_{\frac{1}{j},j} = w_\infty$ in $\Sigma_j\setminus \Sigma_{\frac{1}{j}}$. Moreover, from \eqref{wn_bound} we know that
		\[
		c x_N^\frac{p}{\gamma+p-1} \le w_\infty(x) \le C x_N^\frac{p}{\gamma+p-1} \quad\text{ in } \mathbb{R}^N_+.
		\]
		Hence by defining $w_\infty=0$ on $\partial\mathbb{R}^N_+$, we have $w_\infty \in C^{1,\alpha'}_{\rm loc}(\mathbb{R}^N_+)\cap C(\overline{\mathbb{R}^N_+})$ and $w_\infty$ is a solution to \eqref{pure}.
		By \cite[Theorem 1.2]{MR4044739},
		\[
		w_\infty(x) \equiv \left[\frac{(\gamma+p-1)^p}{p^{p-1}(p-1)(\gamma-1)}\right]^\frac{1}{\gamma+p-1}	x_N^\frac{p}{\gamma+p-1}.
		\]
		
		On the other hand, \eqref{cassump} gives $\frac{\partial \tilde w_n(e_N)}{\partial \eta_n} = x_{n,N}^\frac{\gamma-1}{\gamma+p+1} \frac{\partial u(x_n)}{\partial \eta_n} \to 0$ as $n \to \infty$. This is a contradiction since $\frac{\partial \tilde w_n(e_N)}{\partial \eta_n} \to \frac{\partial w_\infty(e_N)}{\partial \eta}=w_\infty'(1)\eta_N>0$.
		
		Hence \eqref{gradient_blowup} is proved. In particular,
		\[
		u\leq u_\lambda \text{ in } \Sigma_\lambda \quad\text{ for all } 0<\lambda\leq\frac{\lambda_0}{2}.
		\]
		From this, we can proceed as in the proof of Theorem \ref{th:monotonicity} to deduce the monotonicity of $u$ in $\mathbb{R}^N_+$.
	\end{proof}
	
	\section{1D symmetry of solutions}\label{sect4}
	
	\subsection{Weak comparison principles for half-spaces}
	We start this section with the following comparison principles for half-spaces.
	\begin{proposition}\label{prop:wcp2}
		Let $f:(0,+\infty)\to\mathbb{R}$ be a locally Lipschitz continuous function such that $f$ is strictly decreasing on $(t_0,+\infty)$ for some $t_0>0$ and let $\Sigma := \mathbb{R}^N_+\setminus \overline\Sigma_\lambda$ for some $\lambda>0$. Assume that $u, v \in C^{1,\alpha}_{\rm loc}(\Sigma) \cap C(\overline{\Sigma})$ satisfy
		\[
		\begin{cases}
			-\Delta_p u = f(u) &\text{ in } \Sigma,\\
			-\Delta_p v = f(v) &\text{ in } \Sigma,\\
			u>0 &\text{ in } \Sigma,\\
			v>t_0 &\text{ in } \Sigma,\\
			u \le v &\text{ on } \partial\Sigma,
		\end{cases}
		\]
		and
		\[
		u\in L^\infty(\Sigma\cap\Sigma_\mu)\text{ for all } \mu>\lambda,
		\]
		\begin{equation}\label{lim}
			\lim_{x_N\to+\infty} (u-v) = 0 \text{ uniformly in } x'\in\mathbb{R}^{N-1}.
		\end{equation}
		Then $u\le v$ in $\Sigma$.
	\end{proposition}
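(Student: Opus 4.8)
The plan is to run the argument of Proposition \ref{prop:wcp} almost verbatim, the decisive new input being that the \emph{uniform} limit \eqref{lim}, together with the boundary inequality $u\le v$ on $\partial\Sigma$, confines the set $\{u>v\}$ to a bounded strip, so that the very same cutoff in the $x'$-variables can be used even though $\Sigma=\{x_N>\lambda\}$ is unbounded.

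Fix $\varepsilon>0$ and set $w:=(u-v-\varepsilon)^+$. By \eqref{lim} there is $\mu_0>\lambda$ with $u-v<\varepsilon$ on $\{x_N\ge\mu_0\}$; since in addition $u-v\le0<\varepsilon$ on $\partial\Sigma$ and $u-v\in C(\overline\Sigma)$, the set $\{w>0\}$ is contained in the bounded strip $\Sigma':=\Sigma\cap\Sigma_{\mu_0}=\{\lambda<x_N<\mu_0\}$, and $\operatorname{supp}w$ does not meet $\partial\Sigma$. On $\{w>0\}$ one has $t_0<v<v+\varepsilon<u$, and since $u\in L^\infty(\Sigma')$ by hypothesis, both $u$ and $v$ are bounded there, say $t_0<u,v\le M$; being bounded away from a possible singularity of $f$ at the origin, $f(u)$ and $f(v)$ are then bounded on $\{w>0\}$, whence $|\nabla u|,|\nabla v|\le C_0$ there by interior elliptic estimates. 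Moreover, since $f$ is strictly decreasing on $(t_0,+\infty)$ — hence, by continuity, strictly decreasing on $[t_0,+\infty)$ — Lemma \ref{lem:k} (applied to $f$ on $[t_0,M]$, or directly via the compactness argument in its proof to absorb the closed left endpoint) yields $C_\varepsilon>0$ with $f(u)-f(v)\le-C_\varepsilon(u-v)\le-C_\varepsilon w$ on $\{w>0\}$.

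With these ingredients the estimate is identical to that of Proposition \ref{prop:wcp}. Let $\varphi_R\in C^1(\mathbb{R}^{N-1})$ be the cutoff \eqref{phi}, fix $\alpha>N-2$, and put $\psi:=w^\alpha\varphi_R^{\alpha+1}\chi_\Sigma$; its support is a compact subset of $\Sigma$ (it lies in $\overline{B_{2R}'}\times[\lambda,\mu_0]$ and is separated from $\{x_N=\lambda\}$ by the previous step), so $\psi$ may be used as a test function in both equations. Testing and subtracting gives the analogue of \eqref{subtract}; inserting \eqref{p_ge}, \eqref{p_le} and then the analogues of \eqref{t2} and \eqref{t3} (that is, $f(u)-f(v)\le-C_\varepsilon w$ and $|\nabla u|,|\nabla v|\le C_0$ on $\{w>0\}$) into the analogue of \eqref{t0}, one obtains
\[
\alpha C_1\int_\Sigma(|\nabla u|+|\nabla v|)^{p-2}|\nabla w|^2w^{\alpha-1}\varphi_R^{\alpha+1}\le C\int_\Sigma|\nabla\varphi_R|\,w^{\alpha}\varphi_R^{\alpha}-C_\varepsilon\int_\Sigma w^{\alpha+1}\varphi_R^{\alpha+1}.
\]
The weighted Young inequality with exponents $\alpha+1$ and $\tfrac{\alpha+1}{\alpha}$ then absorbs the last two terms and leaves the bound $CR^{N-\alpha-2}$; letting $R\to+\infty$ forces $w\equiv0$, i.e. $u\le v+\varepsilon$ in $\Sigma$, and since $\varepsilon>0$ is arbitrary, $u\le v$ in $\Sigma$.

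The main obstacle, and the only place demanding care beyond Proposition \ref{prop:wcp}, is the localization step and the uniform regularity it entails: one needs that \eqref{lim} genuinely confines $\{w>0\}$ to the bounded strip $\Sigma'$ (this is exactly where the \emph{uniform}, rather than merely pointwise, limit is used, and the reason $u$ is assumed bounded on bounded strips), that on $\{w>0\}$ the two solutions stay between $t_0$ and a finite constant — so that $f(u),f(v)$ are bounded and kept away from the singularity of $f$ at $0$ — and that the interior gradient bound so obtained is uniform up to $\partial\Sigma$, the last point resting on $w$ vanishing continuously on $\partial\Sigma$ so that $\{w>0\}$ is detached from $\partial\Sigma$ over each slab $\overline{B_{2R}'}\times[\lambda,\mu_0]$. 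Once this bookkeeping is in place, the remainder is word for word the computation in Proposition \ref{prop:wcp}.
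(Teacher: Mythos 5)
Your proposal is correct and follows essentially the same route as the paper's proof: use the uniform limit \eqref{lim} (together with $u\le v$ on $\partial\Sigma$ and $u\in L^\infty(\Sigma\cap\Sigma_\mu)$) to confine $\{w>0\}$ to a bounded strip where $t_0<v<v+\varepsilon<u\le M$, invoke Lemma \ref{lem:k} and the interior gradient estimate there, and then repeat verbatim the cutoff/test-function and weighted Young computation of Proposition \ref{prop:wcp}, letting $R\to+\infty$ and then $\varepsilon\to0$. Your extra remarks on the detachment of $\operatorname{supp}\psi$ from $\partial\Sigma$ and on extending the strict decrease to the closed endpoint $t_0$ only make explicit points the paper leaves implicit.
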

	\begin{proof}
		Fix some $\alpha>N-2$ and $\varepsilon>0$. Then we set $w=(u-v-\varepsilon)^+$ and
		\begin{equation}\label{psi}
			\psi(x) := w^{\alpha}(x) \varphi_R^{\alpha+1}(x') \chi_{\Sigma}(x),
		\end{equation}
		where $\varphi_R$ is defined as in \eqref{phi}.
		Using \eqref{lim}, we find that $w=0$ if $x_N>M$ for some $M>\lambda$ independent of $R$. Hence, the support of $\psi$ is compactly contained in $\Sigma$, and we can use $\psi$ as a test function in the equations $-\Delta_p u = f(u)$ and $-\Delta_p v = f(v)$. We can proceed as the proof of Proposition \ref{prop:wcp} until we reach \eqref{t0}.
		
		In the set ${\Sigma}\cap\{w > 0\}$, we have
		\begin{equation}\label{bound}
			t_0 < v < v+\varepsilon < u \le \|u\|_{L^\infty(\Sigma\cap \Sigma_M)}.
		\end{equation}
		Since $f$ is strictly decreasing on $(t_0,+\infty)$, Lemma \ref{lem:k} gives
		\begin{equation}\label{t2'}
			f(u) - f(v) \le - C_\varepsilon (u-v) \le - C_\varepsilon w \quad\text{ in } {\Sigma}\cap\{w > 0\}
		\end{equation}
		for some $C_\varepsilon>0$.
		On the other hand, from \eqref{bound} and the fact that $f$ is locally Lipschitz continuous in $(0,+\infty)$, the standard gradient estimate yields
		\begin{equation}\label{t3'}
			|\nabla u| < C_0 \quad\text{ and }\quad |\nabla v| < C_0 \quad\text{ in } {\Sigma}\cap\{w > 0\}.
		\end{equation}
		Now we can plug \eqref{t2'} and \eqref{t3'} into \eqref{t0} and proceed as in the proof of Proposition \ref{prop:wcp} until we finish the proof.
	\end{proof}
	
	\begin{proposition}\label{prop:wcp3}
		Let $f:(0,+\infty)\to\mathbb{R}$ be a locally Lipschitz continuous function such that $f$ is differentiable and strictly decreasing on $(t_0,+\infty)$ for some $t_0>0$ and
		\[
		\limsup_{t\to+\infty} f'(t) < 0.
		\]		
		Let $\Sigma := \mathbb{R}^N_+\setminus \overline\Sigma_\lambda$ for some $\lambda>0$. Assume that $u, v \in C^{1,\alpha}_{\rm loc}(\Sigma) \cap C(\overline{\Sigma})$ satisfy
		\[
		\begin{cases}
			-\Delta_p u \le f(u) &\text{ in } \Sigma,\\
			-\Delta_p v \ge f(v) &\text{ in } \Sigma,\\
			u>0 &\text{ in } \Sigma,\\
			v>t_0 &\text{ in } \Sigma,\\
			u \le v &\text{ on } \partial\Sigma
		\end{cases}
		\]
		and
		\[
		|\nabla u|, |\nabla v| \in L^\infty(\Sigma).
		\]
		Then $u\le v$ in $\Sigma$.
	\end{proposition}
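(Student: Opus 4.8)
The plan is to carry over the weighted-energy (Caccioppoli-type) argument used for Propositions \ref{prop:wcp} and \ref{prop:wcp2}, with two changes forced by the fact that here $w:=(u-v-\varepsilon)^+$ need not be supported in a bounded range of $x_N$: I must cut off in \emph{all} $N$ variables (which costs one more power in the exponent $\alpha$), and I must use the hypothesis $\limsup_{t\to+\infty}f'(t)<0$ to keep the decay of $f(u)-f(v)$ uniform up to infinity, the globally bounded gradients replacing the local elliptic estimates used before.

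Fix $\alpha>N-1$ and $\varepsilon>0$ and set $w:=(u-v-\varepsilon)^+$. Since $u\le v$ on $\partial\Sigma$, the open set $\{u-v<\varepsilon\}$ contains a neighbourhood of $\partial\Sigma$ in $\overline\Sigma$, so $w$ vanishes there. Let $\varphi_R\in C^1_c(\mathbb{R}^N)$ be a standard cutoff with $\varphi_R\equiv1$ on $B_R$, $\varphi_R\equiv0$ outside $B_{2R}$ and $|\nabla\varphi_R|\le 2/R$, and put $\psi:=w^{\alpha}\varphi_R^{\alpha+1}\chi_{\Sigma}$; then $\operatorname{supp}\psi$ is a compact subset of $\Sigma$ and $\psi\ge0$, so $\psi$ is an admissible test function. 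Testing $-\Delta_p u\le f(u)$ and $-\Delta_p v\ge f(v)$ against $\psi$ and subtracting, I reach the analogue of \eqref{subtract} with $=$ replaced by $\le$, and then, exactly as in the passage to \eqref{t0} (using \eqref{p_ge}, \eqref{p_le} and $|\nabla w|\le|\nabla u|+|\nabla v|$),
\begin{align*}
\alpha C_1\int_{\Sigma}(|\nabla u|+|\nabla v|)^{p-2}|\nabla w|^2w^{\alpha-1}\varphi_R^{\alpha+1}
&\le(\alpha+1)C_2\int_{\Sigma}(|\nabla u|+|\nabla v|)^{p-1}|\nabla\varphi_R|\,w^{\alpha}\varphi_R^{\alpha}\\
&\quad+\int_{\Sigma}\bigl(f(u)-f(v)\bigr)w^{\alpha}\varphi_R^{\alpha+1}.
\end{align*}

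On $\Sigma\cap\{w>0\}$ we have $t_0<v<v+\varepsilon<u$, so $u,v\in(t_0,+\infty)$ with $u-v\ge\varepsilon$; since $f$ is differentiable and strictly decreasing on $(t_0,+\infty)$ with $\limsup_{t\to+\infty}f'(t)<0$, the second part of Lemma \ref{lem:k} (applied with $m=t_0$, $M=+\infty$) gives, via \eqref{k2}, a constant $C_\varepsilon>0$ such that $f(u)-f(v)\le-C_\varepsilon(u-v)\le-C_\varepsilon w$ on $\Sigma\cap\{w>0\}$. Inserting this and the bounds $|\nabla u|,|\nabla v|\le C_0$ on $\Sigma$, and then applying the weighted Young inequality with exponents $\alpha+1$ and $\tfrac{\alpha+1}{\alpha}$ to the product $|\nabla\varphi_R|\,(w\varphi_R)^{\alpha}$ (with the weight chosen so that the resulting $C_\varepsilon\,w^{\alpha+1}\varphi_R^{\alpha+1}$ is cancelled by the $f$-term), I obtain
\[
\alpha C_1\int_{\Sigma}(|\nabla u|+|\nabla v|)^{p-2}|\nabla w|^2w^{\alpha-1}\varphi_R^{\alpha+1}\le C\int_{\mathbb{R}^N}|\nabla\varphi_R|^{\alpha+1}\le CR^{N-1-\alpha}.
\]
Because $\alpha>N-1$, letting $R\to+\infty$ forces $\int_{\Sigma}(|\nabla u|+|\nabla v|)^{p-2}|\nabla w|^2w^{\alpha-1}=0$, which yields $u\le v+\varepsilon$ in $\Sigma$ as in the proof of Proposition \ref{prop:wcp}; letting $\varepsilon\to0$ finishes the proof.

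The only step I expect to require genuine care is the control of $f(u)-f(v)$: since $v$ may be large while only its gradient is bounded, the strict monotonicity of $f$ alone would give a difference quotient that could degenerate to $0$ as $v\to+\infty$, and the term $\int(f(u)-f(v))w^{\alpha}\varphi_R^{\alpha+1}$ would not suffice to absorb the gradient error. The hypothesis $\limsup_{t\to+\infty}f'(t)<0$ is exactly what makes $C_\varepsilon$ uniform over the whole ray $(t_0,+\infty)$ through \eqref{k2}, so that the Young-inequality absorption closes and the estimate degenerates to $O(R^{N-1-\alpha})$.
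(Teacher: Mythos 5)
Your proposal is correct and follows essentially the same route as the paper: replace the cutoff in $x'$ by a full cutoff in all $N$ variables (raising the requirement to $\alpha>N-1$), use the second part of Lemma \ref{lem:k} (i.e. \eqref{k2}, which is where $\limsup_{t\to+\infty}f'(t)<0$ enters) to get $f(u)-f(v)\le -C_\varepsilon w$ on $\{w>0\}$ where now only $t_0<v<v+\varepsilon<u$ is available, and use the assumed global bounds on $|\nabla u|,|\nabla v|$ in place of local elliptic estimates before closing the argument with the weighted Young inequality and $R\to+\infty$. No gaps worth noting.
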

	\begin{proof}
		The proof is similar to that of Proposition \ref{prop:wcp2}. However, the support of function $\psi$ defined as in \eqref{psi} may be unbounded. Instead, we will define $\psi$ as
		\[	
		\psi(x) := w^{\alpha}(x) \phi_R^{\alpha+1}(x') \chi_{\Sigma}(x),
		\]
		where $\alpha>N-1$ and $\phi_R \in C^1(\mathbb{R}^N)$ is a standard cutoff function such that
		\begin{equation}
			\begin{cases}
				0 \le \phi_R \le 1 &\text{ in } \mathbb{R}^N,\\
				\phi_R = 1 &\text{ in } B_R,\\
				\phi_R = 0 &\text{ in } \mathbb{R}^N \setminus B_{2R},\\
				|\nabla\phi_R| \le \frac{2}{R} &\text{ in } B_{2R} \setminus B_R.
			\end{cases}
		\end{equation}
		With this new choice of test function, we can proceed as in the proofs of Propositions \ref{prop:wcp2} and \ref{prop:wcp3}. Notice that in our situation, \eqref{bound} is replaced with
		\[
		t_0 < v < v+\varepsilon < u
		\]
		and \eqref{t2'} still holds thanks to Lemma \ref{lem:k}.
	\end{proof}
	
	\subsection{Positive nonlinearity}
	In this subsection, we consider the case that $f$ is positive and $p>\frac{2N+2}{N+2}$.
	\begin{proof}[Proof of Theorem \ref{th:rigidity1}]	
		Let $\nu\in \mathbb{S}^{N-1}_+$.
		For each $\lambda>0$, we define
		\[
		u_\lambda^\nu (x) := u(x+\lambda\nu).
		\]
		
		We aim to show that
		\begin{equation}\label{nu_monotone}
			u \le u_\lambda^\nu \,\text{ in } \mathbb{R}^N_+ \quad\text{ for all } \lambda>0.
		\end{equation}
		
		From (i) and (ii), there exists $c_0>0$ such that $f(t)>c_0$ for $t>t_0+1$. By Lemma \ref{lem:positive}, there exists $\lambda^*>0$ such that
		\begin{equation}\label{b}
			u(x) \ge t_0+1 \text{ for } x_N>\lambda^*.
		\end{equation}
		Hence $u_\lambda^\nu \ge t_0+1$ in $\mathbb{R}^N_+$ for all $\lambda>\lambda_\nu^*$, where $\lambda_\nu^*:=\frac{\lambda^*}{(\nu,e_N)}$. Moreover, from \eqref{uni_gradient_zero} and the mean value theorem, we deduce
		\[
		\lim_{x_N\to+\infty} (u-u_\lambda^\nu) = 0 \text{ uniformly in } x'\in\mathbb{R}^{N-1}.
		\]
		
		Let any $\lambda>\lambda_\nu^*$. Since
		\[
		\begin{cases}
			-\Delta_p u = f(u) &\text{ in } \mathbb{R}^N_+,\\
			-\Delta_p u_\lambda^\nu = f(u_\lambda^\nu) &\text{ in } \mathbb{R}^N_+,\\
			u>0 &\text{ in } \mathbb{R}^N_+,\\
			u_\lambda^\nu>t_0 &\text{ in } \mathbb{R}^N_+,\\
			u \le u_\lambda^\nu &\text{ on } \partial\mathbb{R}^N_+,
		\end{cases}
		\]
		we can apply Proposition \ref{prop:wcp2} to derive
		\begin{equation}\label{reflection_comparison2}
			u \le u_\lambda^\nu \,\text{ in } \mathbb{R}^N_+ \quad\text{ for all } \lambda>\lambda_\nu^*.
		\end{equation}
		
		Now that the set
		\[
		\Lambda = \{ \lambda>0 \mid u \le u_\mu^\nu \text{ in } \mathbb{R}^N_+ \text{ for all } \mu > \lambda \}
		\]
		is nonempty, we can define
		\[
		\lambda_0 = \inf \Lambda.
		\]
		
		We will show that
		\[
		\lambda_0 = 0.
		\]
		
		Assume, on the contrary, that $\lambda_0>0$. By continuity of $u$, we have $u \le u_{\lambda_0}^\nu$ in $\mathbb{R}^N_+$. To reach a contradiction, we will search for some $\varepsilon_0$ small such that
		\begin{equation}\label{key}
			u \le u_\lambda^\nu \quad\text{ in } \mathbb{R}^N_+
		\end{equation}
		for all $\lambda \in (\lambda_0-\varepsilon_0, \lambda_0)$.
		
		$\circ$ Due to Lemma \ref{lem:positive}, there exist $\tilde{\lambda},\tilde{\delta}>0$ sufficiently small such that
		\begin{equation}\label{reflection_comparison_in2x}
			u+\tilde{\delta}\leq u_\lambda^\nu \quad\text{ in } \Sigma_{\tilde{\lambda}}
		\end{equation}
		for all $\lambda>\lambda_0/2$.
		
		$\circ$ We claim that
		\begin{equation}\label{Sigma_b}
			u \le u_\lambda^\nu \quad\text{ in } \Sigma
		\end{equation}
		for all $\lambda \in (\lambda_0-\varepsilon_0, \lambda_0)$, where $\varepsilon_0>0$ is sufficiently small and
		\[
		\Sigma = \{x\in\mathbb{R}^N_+ \mid \tilde{\lambda}\ \le x_N\le \lambda^*\}.
		\]
		
		Assume that \eqref{Sigma_b} does not hold. Then there exist two sequences $\lambda_n \nearrow \lambda_0$ and $x_n := (x'_n, (x_n)_N) \in \mathbb{R}^{N-1} \times [\tilde{\lambda}, \lambda^*]$ such that
		\begin{equation}\label{contra_sequence}
			u(x_n) > u_{\lambda_n}^\nu(x_n).
		\end{equation}
		Moreover, we may assume $(x_n)_N \to y_0 \in [\tilde{\lambda}, \lambda^*]$. Now we set
		\[
		v_n(x', x_N) = u(x'+x'_n, x_N).
		\]
		
		Since $\min\{C \tilde{\lambda}^\frac{p}{\gamma+p-1}, t_0+1\} \le v_n \le \|u\|_{L^\infty(\Sigma_\lambda)}$ in $\Sigma_\lambda\setminus\Sigma_{\tilde{\lambda}}$, we have that $f(v_n)$ is bounded in $\Sigma_\lambda\setminus\Sigma_{\tilde{\lambda}}$ for each $\lambda>\tilde{\lambda}$. The standard regularity gives $\|v_n\|_{C^{1,\alpha}(\Sigma_\lambda\setminus\Sigma_{\tilde{\lambda}})} < C_\lambda$. By the Arzel\`a--Ascoli theorem, via a standard diagonal process, we have
		\[
		v_n \to v \quad\text{ in } C^{1,\alpha'}_{\rm loc}(\mathbb{R}^N_+\setminus\Sigma_{\tilde{\lambda}})
		\]
		up to a subsequence, for $0<\alpha'<\alpha$. Moreover, $v$ weakly solves $-\Delta_p v = f(v)$ in $\mathbb{R}^N_+\setminus\Sigma_{\tilde{\lambda}}$.
		Using the definition of $\lambda_0$ and passing \eqref{contra_sequence} to the limit, we have
		\begin{align*}
			&v \le v_{\lambda_0}^\nu \quad\text{ in } \mathbb{R}^N_+\setminus\Sigma_{\tilde{\lambda}},\\
			&v(x_0) = v_{\lambda_0}^\nu(x_0),
		\end{align*}
		where $x_0 = (0',y_0)$. On the other hand, by \eqref{reflection_comparison_in2x} we have $v + \tilde{\delta} \le v_{\lambda_0}^\nu$ on $\partial(\mathbb{R}^N_+\setminus\Sigma_{\tilde{\lambda}})$. Hence the strong comparison principle (Theorem \ref{th:scp}) implies $v < v_{\lambda_0}^\nu$ in $\mathbb{R}^N_+\setminus\Sigma_{\tilde{\lambda}}$. This contradicts the fact that $v(x_0) = v_{\lambda_0}^\nu(x_0)$.		
		Therefore, \eqref{Sigma_b} must hold.
		
		$\circ$ Next, we show that
		\begin{equation}\label{C_Sigma_b}
			u \le u_\lambda^\nu \quad\text{ in } \mathbb{R}^N_+\setminus\Sigma_{\lambda^*}
		\end{equation}
		for all $\lambda \in (\lambda_0-\varepsilon_0, \lambda_0)$.
		
		From \eqref{Sigma_b} and the continuity, we already have $u \le u_\lambda^\nu$ on $\partial (\mathbb{R}^N_+\setminus\Sigma_{\lambda^*})$. Moreover, $u_\lambda^\nu(x) \ge t_0+1$ for each $x\in\mathbb{R}^N_+\setminus\Sigma_{\lambda^*}$. Hence \eqref{C_Sigma_b} follows by applying Proposition \ref{prop:wcp2} with $u$ and $v:=u_\lambda^\nu$ on $\mathbb{R}^N_+\setminus\Sigma_{\lambda^*}$.
		
		Combining \eqref{reflection_comparison_in2x}, \eqref{Sigma_b} and \eqref{C_Sigma_b}, we obtain \eqref{key}. This contradicts the definition of $\lambda_0$ and hence \eqref{nu_monotone} is proved.
		
		Therefore, $u$ is monotone increasing in direction $\nu$ for all $\nu\in \mathbb{S}^{N-1}_+$. That is,
		\[
		\frac{\partial u}{\partial \nu} := (\nabla u, \nu) \ge 0 \quad \text{ in } \mathbb{R}^N_+.
		\]
		
		To deduce the 1D symmetry of $u$, we take $\zeta$ be any direction in $\{x\in \partial B_1 \mid x_N=0\}$. Let $\nu_n\in \mathbb{S}^{N-1}_+$ be a sequence converging to $\zeta$, we have $\frac{\partial u}{\partial \nu_n} \ge 0$. By sending $n\to\infty$, we deduce
		\[
		\frac{\partial u}{\partial\zeta} \ge 0 \quad \text{ in } \mathbb{R}^N_+.
		\]
		Similarly, let another sequence $\tau_n\in \mathbb{S}^{N-1}_+$ converging to $-\zeta$, we obtain
		\[
		\frac{\partial u}{\partial\zeta} \le 0 \quad \text{ in } \mathbb{R}^N_+.
		\]
		
		Therefore, $u$ is constant in direction $\zeta$. Since $\zeta$ is arbitrary, we deduce that $u$ does not depend on $x'$. Hence $u$ depends only on $x_N$ and is monotone increasing in $x_N$.
	\end{proof}
	
	\subsection{Sign-changing nonlinearity}
	In this subsection, we consider the case that $f$ is sign-changing and $\frac{2N+2}{N+2}<p<2$.
	
	In this case, the strong comparison principle does not hold in all of $\mathbb{R}^N_+$. Hence, a delicate analysis of the critical set of solutions plays a vital role.
	To this end, we denote
	\begin{align*}
		Z_u&=\{x\in\mathbb{R}^N_+ \mid \nabla u(x) = 0\},\\
		Z_{u_\lambda^\nu}&=\{x\in\mathbb{R}^N_+ \mid \nabla u_\lambda^\nu(x) = 0\},\\
		Z_{f(u)}&=\{x\in\mathbb{R}^N_+ \mid u(x) \in Z_f\},\\
		Z_{f(u_\lambda^\nu)}&=\{x\in\mathbb{R}^N_+ \mid u_\lambda^\nu(x) \in Z_f\}.
	\end{align*}
	Motivated by \cite[Proposition 4.3]{MR4377321} and \cite[Lemma 13]{MR4771444}, we prove the following strong comparison type principle.
	\begin{lemma}\label{lem:limit_case}
		Assume $\frac{2N+2}{N+2}<p<2$ and $f:(0,+\infty)\to\mathbb{R}$ is a locally Lipschitz continuous function with $Z_f$ being a discrete set. Let $\Sigma=\mathbb{R}^N_+\setminus\Sigma_{\tilde{\lambda}}$ for some $\tilde{\lambda}>0$ and $u \in C^1_{\rm loc}(\Sigma) \cap C(\overline{\Sigma})$ be a solution to the problem
		\[
		\begin{cases}
			-\Delta_p u = f(u) & \text{ in } \Sigma,\\
			u>0 & \text{ in } \Sigma.
		\end{cases}
		\]
		Furthermore, assume that
		\begin{equation}\label{u_monotone}
			u \le u_\lambda^\nu \text{ in }\Sigma \quad\text{ for all } \lambda \ge \lambda_0,
		\end{equation}
		\begin{equation}\label{u_monotone2}
			u < u_{\lambda_0}^\nu \text{ on } \partial\Sigma,
		\end{equation}
		where $\nu\in S^{N-1}_+$ and $\lambda_0>0$.
		Then
		\[
		u < u_{\lambda_0}^\nu \text{ in } \Sigma \setminus \left(Z_{f(u)} \cap Z_{f(u_{\lambda_0}^\nu)} \cap Z_u \cap Z_{u_{\lambda_0}^\nu}\right).
		\]
	\end{lemma}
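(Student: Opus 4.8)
The plan is to combine the two strong comparison principles of Section~\ref{sect2} (Theorems~\ref{th:cscp} and~\ref{th:scp}) with a careful dissection of the domain according to where the critical set and the zero set of $f$ interact. Write $w := u_{\lambda_0}^\nu - u \ge 0$ in $\Sigma$, and let
\[
\mathcal{Z} := Z_{f(u)} \cap Z_{f(u_{\lambda_0}^\nu)} \cap Z_u \cap Z_{u_{\lambda_0}^\nu}.
\]
We want $w > 0$ on $\Sigma \setminus \mathcal{Z}$. Suppose not: then there is a point $x_0 \in \Sigma \setminus \mathcal{Z}$ with $w(x_0) = 0$, i.e.\ $u(x_0) = u_{\lambda_0}^\nu(x_0)$. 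By \eqref{u_monotone2} the bad point cannot lie on $\partial\Sigma$, so $x_0$ is interior. First I would localize: since $Z_f$ is discrete and $u$ is continuous, the set $\{x : u(x) = u(x_0)\} \supset$ nothing forces $u(x_0)$ to be isolated in value, but what matters is that near $x_0$ the function $u$ takes values in a neighbourhood of $u(x_0)$ that meets $Z_f$ in at most the single point $u(x_0)$ (if $u(x_0)\in Z_f$) or not at all.

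The argument now splits according to whether $x_0 \in Z_u \cup Z_{u_{\lambda_0}^\nu}$ or not. If $x_0 \notin Z_u \cup Z_{u_{\lambda_0}^\nu}$, then $|\nabla u(x_0)| + |\nabla u_{\lambda_0}^\nu(x_0)| > 0$, so Theorem~\ref{th:cscp} applies on the connected component $\mathcal{C}$ of $\Sigma \setminus (Z_u \cup Z_{u_{\lambda_0}^\nu})$ containing $x_0$, giving $u \equiv u_{\lambda_0}^\nu$ on $\mathcal{C}$; then one propagates this identity to the closure and uses \eqref{u_monotone2} together with connectedness of $\Sigma$ (the strip complement $\mathbb{R}^N_+\setminus\Sigma_{\tilde\lambda}$ is connected and its boundary is the hyperplane $\{x_N = \tilde\lambda\}$) to reach a contradiction, exactly as in \cite[Proposition~4.3]{MR4377321}. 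The delicate case is $x_0 \in Z_u \cup Z_{u_{\lambda_0}^\nu}$ but $x_0 \notin \mathcal{Z}$; then at least one of the four membership conditions fails, so either $u(x_0) \notin Z_f$ or $u_{\lambda_0}^\nu(x_0) \notin Z_f$, i.e.\ $f(u(x_0)) = f(u_{\lambda_0}^\nu(x_0)) \ne 0$ (the two values are equal since $w(x_0)=0$). By continuity and the restriction $\frac{2N+2}{N+2} < p < 2$ (which is where $p > \frac{2N+2}{N+2}$ is needed for Theorem~\ref{th:scp}), choose a small ball $B = B_r(x_0) \subset \Sigma$ on which $u$ (hence also $u_{\lambda_0}^\nu$, since both are squeezed between $u(x_0)\pm\epsilon$) stays in a range where $f$ has a constant sign; then either $f(u) > 0$ on $\overline B$ or $f(u) < 0$ on $\overline B$, so hypothesis~(i) of Theorem~\ref{th:scp} holds on $B$ with $u$ and $v := u_{\lambda_0}^\nu$. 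Since $u \le u_{\lambda_0}^\nu$ on $B$ and they coincide at the interior point $x_0$, Theorem~\ref{th:scp} forces $u \equiv u_{\lambda_0}^\nu$ on $B$.

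Once $u \equiv u_{\lambda_0}^\nu$ on some ball, I would run a connectedness/open-closed argument on $\Sigma$: the set $E := \{x \in \Sigma : u = u_{\lambda_0}^\nu \text{ in a neighbourhood of } x\}$ is open and nonempty; to see it is (relatively) closed one shows that at any boundary point $x_1 \in \partial E \cap \Sigma$ either Theorem~\ref{th:cscp} or Theorem~\ref{th:scp} again applies (using that $u(x_1)=u_{\lambda_0}^\nu(x_1)$, that $u \le u_{\lambda_0}^\nu$ everywhere, and distinguishing the cases $f(u(x_1))\ne 0$ versus $x_1 \notin Z_u\cup Z_{u_{\lambda_0}^\nu}$; the only way both can fail is $x_1 \in \mathcal{Z}$, but $\mathcal{Z}$ is small—contained in $Z_{f(u)}$, whose image under $u$ lies in the discrete set $Z_f$—so one argues that $\mathcal{Z}$ cannot disconnect $\Sigma$, or equivalently that $u$ is locally constant equal to some value of $Z_f$ near such points and patches consistently). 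Therefore $E = \Sigma$, i.e.\ $u \equiv u_{\lambda_0}^\nu$ on all of $\Sigma$. This contradicts \eqref{u_monotone2}, which says $u < u_{\lambda_0}^\nu$ on the nonempty set $\partial\Sigma$. Hence no such $x_0$ exists and the claim follows.

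The main obstacle is the bookkeeping in the case $x_0 \in Z_u \cup Z_{u_{\lambda_0}^\nu} \setminus \mathcal{Z}$ and, especially, showing that the very thin exceptional set $\mathcal{Z}$ (where neither strong comparison principle is available) genuinely cannot block the spread of the coincidence set $E$ across $\Sigma$. The key structural facts that make this work are that $Z_f$ is discrete—so $Z_{f(u)}$ and $Z_{f(u_{\lambda_0}^\nu)}$ are level sets of $u$, $u_{\lambda_0}^\nu$ at isolated values—and that $\frac{2N+2}{N+2} < p < 2$ guarantees the $C^{1,\alpha}$ regularity and the validity of Theorem~\ref{th:scp} needed to cross critical points wherever $f$ is nonzero. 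These are precisely the ingredients exploited in \cite[Proposition~4.3]{MR4377321} and \cite[Lemma~13]{MR4771444}, and I would follow their critical-set analysis closely, adapting it to the half-space-minus-strip geometry where $\partial\Sigma$ is a single hyperplane on which strict inequality~\eqref{u_monotone2} is assumed.
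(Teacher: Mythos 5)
Your local steps (Theorem \ref{th:cscp} off the critical set, Theorem \ref{th:scp} on a small ball where $f(u)$ has a sign) are fine, but the global propagation is exactly where the proposal breaks down, and you do not fill the hole you yourself identify. The closedness step of your open--closed argument fails whenever $\partial E\cap\Sigma$ meets $\mathcal{Z}=Z_{f(u)}\cap Z_{f(u_{\lambda_0}^\nu)}\cap Z_u\cap Z_{u_{\lambda_0}^\nu}$, and your claim that $\mathcal{Z}$ is ``small'' and ``cannot disconnect $\Sigma$'' is unjustified and in general false: for the degenerate $p$-Laplacian with sign-changing $f$, a solution may have plateau regions where $u\equiv z_k\in Z_f$ and $\nabla u\equiv0$ on an open set, so $\mathcal{Z}$ can have nonempty interior and the coincidence set $E$ can be walled off by it. Indeed the very formulation of the lemma (coincidence is only excluded \emph{off} $\mathcal{Z}$) reflects that no comparison principle is available there, so an argument that tries to conclude $u\equiv u_{\lambda_0}^\nu$ on all of $\Sigma$ by spreading through $\mathcal{Z}$ cannot work as stated. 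Two telltale signs of the gap: you never use hypothesis \eqref{u_monotone} for $\lambda>\lambda_0$ (only at $\lambda=\lambda_0$), and you never use $p<2$ beyond quoting Theorem \ref{th:scp}; both are essential in the actual proof.

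The paper's proof replaces the open--closed argument by a directional sliding-ball mechanism that crosses the degenerate level sets. Starting from a component $\Omega_0$ of $\Sigma\setminus Z_{f(u)}$ where $u=u_{\lambda_0}^\nu$ (your step 3 / the paper's Case 1), one slides a ball inside $\Omega_0$ in the direction $-\nu$ toward $\partial\Sigma$ until it first touches $\partial\Omega_0$ at a point $\hat x_0$; the full family of inequalities \eqref{u_monotone} for all $\lambda\ge\lambda_0$, combined with $u=u_{\lambda_0}^\nu$ in $\Omega_0$, forces the touching value to be the \emph{lower} zero $z_k$ (descent, not ascent). If $\hat x_0\in\partial\Sigma$ one contradicts \eqref{u_monotone2}; otherwise one applies Hopf's lemma to $w=u-z_k$ for the operator $-\Delta_p+C(\cdot)^{p-1}$ --- this is where $p<2$ and the local Lipschitz continuity of $f$ enter, via $Cw^{p-1}-Kw\ge0$ for small $w$ --- to get $\nabla u(\hat x_0)\ne0$, so Theorem \ref{th:cscp} extends the coincidence across the level set and produces a new coincidence point with value in $(z_{k-1},z_k)$. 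Iterating finitely many times (the levels are discrete and $u>0$) the ball must eventually touch $\partial\Sigma$, contradicting \eqref{u_monotone2}. Without this (or an equivalent device for crossing $\mathcal{Z}$), your argument stalls precisely at the ``main obstacle'' you flag, so the proposal as written is incomplete.
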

	
	\begin{proof}
		The proof follows the technique in \cite[Lemma 13]{MR4771444} (see also \cite{MR4635360,MR4377321}).
		We denote all zeroes of $f$ by
		\[
		0<z_1<z_2<\cdots
		\]
		We also denote $z_0=0$.		
		By contradiction, assume that there exists
		\[
		x_0 \in \Sigma \setminus \left(Z_{f(u)} \cap Z_{f(u_{\lambda_0}^\nu)} \cap Z_u \cap Z_{u_{\lambda_0}^\nu}\right)
		\]
		such that $u(x_0) = u_{\lambda_0}^\nu(x_0)$. There are four cases to be considered:
		
		$\circ$ \textit{Case 1:} $x_0 \in \Sigma \setminus Z_{f(u)}$. That is,
		\[
		z_k < u(x_0) < z_{k+1} \text{ for some } k\ge0.
		\]
		
		Let $\Omega_0$ be the connected component of $\Sigma\setminus Z_{f(u)}$ containing $x_0$. Then for all $x\in\partial\Omega_0$, we have either $u(x)=z_k$ or $u(x)=z_{k+1}$. By Theorem \ref{th:scp}, since $u(x_0) = u_{\lambda_0}^\nu(x_0)$, we have
		\begin{equation}\label{u_equal}
			u = u_{\lambda_0}^\nu \quad\text{ in } \Omega_0.
		\end{equation}
		Because $\Omega_0$ is open, there exists $r_0>0$ such that
		\[
		B_{2r_0}(x_0) \subset \Omega_0.
		\]
		We slide the ball $B_{r_0}(x_0)$ in $\Omega_0$, towards to $\partial\Sigma$ in direction $-\nu$ and keep its center on the ray $\Gamma_{x_0}:=\{x_0-t\nu \mid t\ge0\}$. The ball will touch $\partial\Omega_0$ for the first time at some point $\hat x_0\in\partial\Omega_0$. We denote by $\tilde x_0=x_0-t_0\nu$ the new center of the slid ball.
		
		Using \eqref{u_monotone} and \eqref{u_equal}, for every $x\in B_{r_0}(\tilde x_0)$, which is the slid point of $x+t_0\nu\in B_{r_0}(x_0)$, we have
		\[
		z_k < u(x) \le u_{\lambda_0+t_0}^\nu(x) =  u_{\lambda_0}^\nu(x+t_0\nu) = u(x+t_0\nu) \le \max_{\overline{B_{r_0}(x_0)}}u < z_{k+1}.
		\]
		Therefore, the touching point $\hat x_0$ must satisfy $u(\hat x_0)=z_k$. Moreover, by continuity, we have $u(\hat x_0) = u_{\lambda_0}^\nu(\hat x_0)$. We consider two possibilities.
		
		- Possibility (i): $\hat x_0\in\partial\Sigma$. Then $u(\hat x_0) = u_{\lambda_0}^\nu(\hat x_0) = u(\hat x_0 + \lambda_0\nu)$ contradicts assumption \eqref{u_monotone2}.
		
		- Possibility (ii): $\hat x_0\notin\partial\Sigma$. Let us define the function
		\[
		w(x) := u(x) - z_k \quad\text{ for }x\in B_{r_0}(\tilde x_0).
		\]
		
		Since $p<2$ and $f$ is locally Lipschitz continuous in $(0,+\infty)$, we have
		\[
		C w^{p-1} + f(u) = C w^{p-1} + f(u) - f(z_k) \ge C w^{p-1} - K(u-z_k) \ge 0
		\]
		for sufficiently large $C$.
		Hence $w$ satisfies
		\[
		\begin{cases}
			-\Delta_p w + C w^{p-1} \ge 0 &\text{ in } B_{r_0}(\tilde x_0),\\
			w>0 &\text{ in } B_{r_0}(\tilde x_0),\\
			w(\hat x_0)=0.
		\end{cases}
		\]
		
		By Hopf's lemma \cite{MR768629}, it follows that
		\begin{equation}\label{Hopf_deri}
			\frac{\partial u}{\partial\eta}(\hat x_0) < 0,
		\end{equation}
		where $\eta=\frac{\hat x_0 - \tilde x_0}{|\hat x_0 - \tilde x_0|}$ is the outward normal at $\hat x_0$. In particular, $|\nabla u(\hat x_0)|\ne0$. Since $u\in C^1(\mathbb{R}^N)$, there exists a ball $B_{\rho_0}(\hat x_0) \subset \mathbb{R}^N$ such that $|\nabla u|\ne0$ in $B_{\rho_0}(\hat x_0)$. By Theorem \ref{th:cscp}, since $u(\hat x_0) = u_{\lambda_0}^\nu(\hat x_0)$, we have
		\[
		u = u_{\lambda_0}^\nu \quad\text{ in } B_{\rho_0}(\hat x_0).
		\]
		From \eqref{Hopf_deri}, we can find a point $x_1 \in \{\hat x_0 + t \eta \mid t>0\} \cap B_{\rho_0}(\hat x_0)$ which is close to $\hat x_0$ such that
		\[
		z_{k-1} < u(x_1) < u(\hat x_0) = z_k.
		\]
		
		Therefore, from a point $x_0$ with $u(x_0) = u_{\lambda_0}^\nu(x_0)$ and $z_k < u(x_0) < z_{k+1}$, we have found a new point $x_1$ satisfying $u(x_1) = u_{\lambda_0}^\nu(x_1)$ and $z_{k-1} < u(x_1) < z_k$. Repeating this argument a finite number of times, we finally find a ball that touches $\partial\Sigma$. Then we have a contradiction as in Possibility (i).
		
		$\circ$ \textit{Case 2:} $x_0 \in \Sigma \setminus Z_{f(u_{\lambda_0}^\nu)}$. Since $u(x_0) = u_{\lambda_0}^\nu(x_0)$, this case is actually Case 1.
		
		$\circ$ \textit{Case 3:} $x_0 \in \Sigma \setminus Z_u$.
		
		Since $u \in C^1(\overline{\Sigma})$, we deduce $|\nabla u| \ne 0$ in $B_\varepsilon(x_0)$ for some $\varepsilon>0$. Theorem \ref{th:cscp} now comes into play to yield
		\[
		u = u_{\lambda_0}^\nu \quad\text{ in } B_\varepsilon(x_0).
		\]
		
		Moreover, since $u$ is not constant in $B_\varepsilon(x_0)$, we can find $x_1\in B_\varepsilon(x_0)$ such that $x_1\notin Z_{f(u)}$. Using $u(x_1) = u_{\lambda_0}^\nu(x_1)$, we will reach a contradiction as in Case 1.
		
		$\circ$ \textit{Case 4:} $x_0 \in \Sigma \setminus Z_{u_{\lambda_0}^\nu}$. This case is similar to Case 3.
	\end{proof}
	
	We also recall the following weak comparison principle in a strip for solutions with small gradients from \cite[Proposition 11]{MR4771444} (see also a stronger version in \cite[Theorem 1.6]{MR3303939}).
	\begin{proposition}[Proposition 11 in \cite{MR4771444}]\label{prop:degenerate}
		Let $1<p<2$ and $f:(0,+\infty)\to\mathbb{R}$ be a locally Lipschitz continuous function.
		Let $M,a,b>0$ and let $u\in C^1(\overline{\Omega})$ be a subsolution and $v\in C^1(\overline{\Omega})$ be a supersolution to
		\[
		\begin{cases}
			-\Delta_p w = f(w) &\text{ in } \Omega,\\
			a<w<b &\text{ in } \Omega,
		\end{cases}
		\]
		where
		\[
		\Omega \subset \{x\in\mathbb{R}^N \mid 0 \le x_N \le M\}.
		\]
		Assume that
		\[
		u \le v \quad\text{ on } \partial \Omega
		\]
		and
		\[
		|\nabla u| + |\nabla v| < \eta \quad\text{ in } \Omega,
		\]
		where $\eta>0$.
		Then there exists $\eta_0=\eta_0(N,p,f,M,a,b)>0$ such that $u \le v$ in $\Omega$ whenever $\eta\le\eta_0$.
	\end{proposition}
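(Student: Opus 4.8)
The plan is to run an energy estimate in the spirit of the proof of Proposition \ref{prop:wcp}, the decisive new feature being that for $1<p<2$ the ellipticity ``coefficient'' $(|\nabla u|+|\nabla v|)^{p-2}$ is bounded \emph{below} on $\Omega$ by $\eta^{p-2}$, and $\eta^{p-2}\to+\infty$ as $\eta\to0^{+}$ precisely because $p-2<0$. This blow-up of the coercivity constant will be balanced against a Poincar\'e inequality in the strip $\{0<x_N<M\}$, whose constant depends only on $M$ and not on the (possibly unbounded) tangential directions, and it will defeat the Lipschitz constant of $f$ on $[a,b]$ once $\eta$ is taken small enough.

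Concretely, I would set $w:=(u-v)^{+}\in C(\overline{\Omega})\cap W^{1,\infty}(\Omega)$, which vanishes on $\partial\Omega$ and satisfies $0<w\le b-a$, $|\nabla w|\le|\nabla u|+|\nabla v|<\eta$, and $a<v<u<b$ on $\{w>0\}$. Fixing $\alpha>N-2$ (say $\alpha=N-1$) and the cutoff $\varphi_R\in C^{1}(\mathbb{R}^{N-1})$ of \eqref{phi}, I would use $\psi:=w^{\alpha}\varphi_R^{\alpha+1}$ — which is nonnegative, vanishes on $\partial\Omega$, and has compact support in $\overline{\Omega}$ — as a test function in the subsolution inequality for $u$ and the supersolution inequality for $v$, and subtract. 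Bounding the principal term from below with \eqref{p_ge} and the cutoff term with \eqref{p_le} together with $|\nabla w|\le|\nabla u|+|\nabla v|$ (so that the products $(|\nabla u|+|\nabla v|)^{p-2}|\nabla w|^{2}\ge\eta^{p-2}|\nabla w|^{2}$ and $(|\nabla u|+|\nabla v|)^{p-2}|\nabla w|\le\eta^{p-1}$ hold a.e., the degenerate set contributing nothing), and using $f(u)-f(v)\le L\,w$ on $\{w>0\}$ with $L=L(f,a,b)$, one is led to
\[
\alpha C_{1}\eta^{p-2}\int_{\Omega}w^{\alpha-1}|\nabla w|^{2}\varphi_R^{\alpha+1}\le(\alpha+1)C_{2}\eta^{p-1}\int_{\Omega}|\nabla\varphi_R|\,w^{\alpha}\varphi_R^{\alpha}+L\int_{\Omega}w^{\alpha+1}\varphi_R^{\alpha+1}.
\]

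The rest is bookkeeping. A weighted Young inequality with exponents $\alpha+1$ and $\tfrac{\alpha+1}{\alpha}$ bounds the middle term by $\delta\int w^{\alpha+1}\varphi_R^{\alpha+1}+C_{\delta}\eta^{(p-1)(\alpha+1)}\int|\nabla\varphi_R|^{\alpha+1}$, and $\int_{\Omega}|\nabla\varphi_R|^{\alpha+1}=O(R^{N-\alpha-2})\to0$ since $\alpha>N-2$. Writing $z:=w^{(\alpha+1)/2}$ so that $w^{\alpha-1}|\nabla w|^{2}=\tfrac{4}{(\alpha+1)^{2}}|\nabla z|^{2}$, I would then apply the strip Poincar\'e inequality to $z\varphi_R^{(\alpha+1)/2}$ (whose zero extension lies in $W^{1,2}$ of $\{0<x_N<M\}$, with Poincar\'e constant $\le M^{2}$) to estimate $\int w^{\alpha+1}\varphi_R^{\alpha+1}$ by $C(N)M^{2}\int w^{\alpha-1}|\nabla w|^{2}\varphi_R^{\alpha+1}$ plus a cutoff error $\int w^{\alpha+1}\varphi_R^{\alpha-1}|\nabla\varphi_R|^{2}$; a further Young inequality with exponents $\tfrac{\alpha+1}{\alpha-1}$ and $\tfrac{\alpha+1}{2}$ replaces this error by $\delta'\int w^{\alpha+1}\varphi_R^{\alpha+1}+C_{\delta'}\int w^{\alpha+1}|\nabla\varphi_R|^{\alpha+1}$, again $O(R^{N-\alpha-2})$ because $w\le b-a$. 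After absorbing the $\delta,\delta'$-multiples of $\int w^{\alpha+1}\varphi_R^{\alpha+1}$ and collecting terms, one arrives at
\[
\bigl(\alpha C_{1}\eta^{p-2}-C(N,p)\,L\,M^{2}\bigr)\int_{\Omega}w^{\alpha-1}|\nabla w|^{2}\varphi_R^{\alpha+1}\le C\bigl(1+\eta^{(p-1)(\alpha+1)}\bigr)R^{N-\alpha-2}.
\]
Choosing $\eta_{0}=\eta_{0}(N,p,f,M,a,b)>0$ so small that $\alpha C_{1}\eta_{0}^{p-2}\ge2C(N,p)LM^{2}$ — possible exactly because $2-p>0$ — makes the bracket positive for every $\eta\le\eta_{0}$, and letting $R\to+\infty$ forces $\int_{\Omega}w^{\alpha-1}|\nabla w|^{2}=0$, hence $\nabla(w^{(\alpha+1)/2})\equiv0$ on $\Omega$; since $w=0$ on $\partial\Omega$ (and $\partial\Omega'\subset\partial\Omega$ for each connected component $\Omega'$), this gives $w\equiv0$, i.e. $u\le v$ in $\Omega$.

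I expect the only genuinely delicate point to be the choice of test function and of the two Young exponents so that \emph{both} cutoff errors carry the factor $|\nabla\varphi_R|^{\alpha+1}$ and thus vanish as $R\to+\infty$: this is why one must work with $w^{\alpha}$ for some $\alpha>N-2$ rather than with $w$ itself, exactly as in the proof of Proposition \ref{prop:wcp}. If $\Omega$ is bounded the cutoff is unnecessary and the argument collapses to testing with $w$, invoking \eqref{p_ge}, the Lipschitz bound on $f$, and the strip Poincar\'e inequality, which already displays the mechanism $\eta^{p-2}\to+\infty$ that drives the whole proof.
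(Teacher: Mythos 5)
The paper does not actually prove this proposition: it is imported verbatim as Proposition 11 of \cite{MR4771444} (with \cite[Theorem 1.6]{MR3303939} cited as a stronger variant), so there is no in-paper argument to compare against. Your proof is correct and is essentially the standard argument behind such small-gradient weak comparison principles: for $1<p<2$ the weight $(|\nabla u|+|\nabla v|)^{p-2}$ in \eqref{p_ge} is bounded below by $\eta^{p-2}\to+\infty$, and this beats the Lipschitz constant of $f$ on $[a,b]$ once it is paired with the Poincar\'e inequality in the $x_N$-direction of the slab (constant depending only on $M$); your use of the power $w^{\alpha}$, $\alpha>N-2$, with the cutoff \eqref{phi} to kill the boundary-at-infinity terms mirrors exactly the proof of Proposition \ref{prop:wcp} in the paper and the arguments in the cited sources, so the route is the expected one. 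Two small points to tighten. First, justify that $\psi=w^{\alpha}\varphi_R^{\alpha+1}$ is an admissible test function: since $w$ vanishes continuously on $\partial\Omega$ and $\mathrm{supp}\,\varphi_R$ is bounded, the sets $\{\psi\ge\varepsilon\}$ are compact subsets of $\Omega$, so $(\psi-\varepsilon)^{+}$ is admissible and one passes to the limit $\varepsilon\to0$ using that $|\nabla u|^{p-2}\nabla u$, $|\nabla v|^{p-2}\nabla v$ and $f(u),f(v)$ are bounded (truncating $\psi$ itself is preferable to replacing $w$ by $(u-v-\varepsilon)^{+}$, which would produce an extra $L\varepsilon$-term with the wrong sign since here $f$ is only Lipschitz, not decreasing). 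Second, the sample choice $\alpha=N-1$ breaks down for $N=2$, where the Young exponent $\frac{\alpha+1}{\alpha-1}$ degenerates; either take any $\alpha>\max\{1,N-2\}$, or note that for $\alpha=1$ the cutoff error $\int w^{2}|\nabla\varphi_R|^{2}$ is already $O(R^{N-3})$ and needs no Young inequality. With these routine adjustments the argument is complete and yields $\eta_0=\eta_0(N,p,f,M,a,b)$ as claimed.
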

	
	We are in a position to prove Theorem \ref{th:rigidity2}.
	\begin{proof}[Proof of Theorem \ref{th:rigidity2}]
		We may proceed as in the proof of Theorem \ref{th:rigidity1} to deduce that the set
		\[
		\Lambda = \{ \lambda>0 \mid u \le u_\mu^\nu \text{ in } \mathbb{R}^N_+ \text{ for all } \mu > \lambda \}
		\]
		is nonempty. We will show that
		\[
		\lambda_0 := \inf \Lambda = 0.
		\]
		
		Assume, on contrary, that $\lambda_0>0$. By continuity of $u$, we have $u \le u_{\lambda_0}^\nu$ in $\mathbb{R}^N_+$. To reach a contradiction, we will search for some $0<\varepsilon_0<\frac{\lambda_0}{2}$ such that
		\begin{equation}\label{key2}
			u \le u_\lambda^\nu \quad\text{ in } \mathbb{R}^N_+
		\end{equation}
		for all $\lambda \in (\lambda_0-\varepsilon_0, \lambda_0)$.
		
		$\circ$ Due to Lemma \ref{lem:positive}, there exist $\tilde{\lambda},\tilde{\delta}>0$ sufficiently small such that
		\begin{equation}\label{u0}
			u+\tilde{\delta}\leq u_\lambda^\nu \quad\text{ in } \Sigma_{\tilde{\lambda}}
		\end{equation}
		for all $\lambda>\lambda_0/2$.
		We decompose $\mathbb{R}^N_+$ into four disjoint subsets
		\[
		\mathbb{R}^N_+ = \Sigma_{\tilde{\lambda}} \cup (\Sigma\setminus \Omega_\lambda) \cup (\mathbb{R}^N_+\setminus\Sigma_M )\cup  \Omega_\lambda,
		\]
		where
		\begin{align*}
			\Sigma &= \Sigma_M\setminus\Sigma_{\tilde{\lambda}},\\
			\Omega_\lambda &= \{x\in\mathbb{R}^N_+ \mid |\nabla u| + |\nabla u_\lambda^\nu| + \min_{z\in Z_f}|u-z| + \min_{z\in Z_f}|u_\lambda^\nu-z| < \eta\}.
		\end{align*}
		Here $\eta>0$ is sufficiently small and $M>\lambda^*$ is sufficiently large such that $\Omega_\lambda\subset\Sigma$ for all $\lambda \in (\frac{\lambda_0}{2}, \lambda_0)$. (We recall that $\lambda^*$ is defined as in \eqref{b}.) Notice that such $\eta$ and $M$ can be chosen by combining the fact $\min_{z\in Z_f}|u-z|<\eta$ in $\Omega_\lambda$ with \eqref{uni_large} and $Z_f\cap(t_0,+\infty)=\emptyset$. Then we choose $\eta$ even smaller if necessary, such that Proposition \ref{prop:degenerate} holds.
		
		$\circ$ We claim that
		\begin{equation}\label{u1}
			u \le u_\lambda^\nu \quad\text{ in } \Sigma\setminus \Omega_\lambda
		\end{equation}
		for all $\lambda \in (\lambda_0-\varepsilon_0, \lambda_0)$, where $\varepsilon_0<\frac{\lambda_0}{2}$ is sufficiently small.
		
		Assume that \eqref{u1} does not hold. Then there exist two sequences $\lambda_n \nearrow \lambda_0$ and $x_n := (x'_n, (x_n)_N) \in \mathbb{R}^{N-1} \times [\tilde{\lambda},M)$ such that
		\begin{equation}\label{u2}
			u(x_n) > u_{\lambda_n}^\nu(x_n)
		\end{equation}
		and
		\begin{equation}\label{u4}
			|\nabla u(x_n)| + |\nabla u_{\lambda_n}^\nu(x_n)| + \min_{z\in Z_f}|u(x_n)-z| + \min_{z\in Z_f}|u_{\lambda_n}^\nu(x_n)-z| \ge \eta.
		\end{equation}
		Moreover, we may assume $(x_n)_N \to y_0 \in [\tilde{\lambda},M]$. Now we set
		\[
		v_n(x', x_N) = u(x'+x'_n, x_N).
		\]
		
		As in the proof of Theorem \ref{th:rigidity1}, we have
		\[
		v_n \to v \quad\text{ in } C^1_{\rm loc}(\mathbb{R}^N_+\setminus\Sigma_{\tilde{\lambda}})
		\]
		up to a subsequence. Moreover, $v$ weakly solves $-\Delta_p v = f(v)$ in $\mathbb{R}^N_+\setminus\Sigma_{\tilde{\lambda}}$.
		Using the definition of $\lambda_0$ and passing \eqref{u2} and \eqref{u4} to the limit, we also obtain
		\begin{align*}
			&v \le v_\lambda^\nu \quad\text{ in } \mathbb{R}^N_+ \quad\text{ for all } \lambda \ge \lambda_0,\\
			&v(x_0) = v_{\lambda_0}^\nu(x_0),\\
			&|\nabla v(x_0)| + |\nabla v_{\lambda_0}^\nu(x_0)| + \min_{z\in Z_f}|v(x_0)-z| + \min_{z\in Z_f}|v_{\lambda_0}^\nu(x_0)-z| \ge \eta,
		\end{align*}
		where $x_0 := (0,y_0) \in\overline{\Sigma_M\setminus\Sigma_{\tilde{\lambda}}}$. Moreover, \eqref{u0} implies $x_0 \in \overline{\Sigma_M}\setminus\Sigma_{\tilde{\lambda}}$. However, the existence of such a solution $v$ and point $x_0$ contradicts Lemma \ref{lem:limit_case}.		
		Therefore, \eqref{u1} must hold.
		
		$\circ$ Next, we show that
		\begin{equation}\label{u5}
			u \le u_\lambda^\nu \quad\text{ in } \mathbb{R}^N_+\setminus\Sigma_M
		\end{equation}
		for all $\lambda \in (\lambda_0-\varepsilon_0, \lambda_0)$.
		From \eqref{u1} and $\Omega_\lambda\subset\Sigma$, we already have $u \le u_\lambda^\nu$ on $\partial (\mathbb{R}^N_+\setminus\Sigma_M )$. Moreover, from \eqref{b}, we have $u_\lambda^\nu(x) > t_0$ for each $x\in\mathbb{R}^N_+\setminus\Sigma_M$. Hence \eqref{u5} follows by applying  Proposition \ref{prop:wcp2} with $u$ and $v:=u_\lambda^\nu$ on $\mathbb{R}^N_+\setminus\Sigma_M$.
		
		$\circ$ From \eqref{u1}, we also have $u \le u_\lambda^\nu$ on $\partial \Omega_\lambda$. Therefore, we can apply Proposition \ref{prop:degenerate} for $v=u_\lambda^\nu$ to deduce
		\begin{equation}\label{u3}
			u \le u_\lambda^\nu \quad\text{ in } \Omega_\lambda.
		\end{equation}
		
		Combining \eqref{u0}, \eqref{u1}, \eqref{u5}, \eqref{u3}, we obtain \eqref{key2}. This contradicts the definition of $\lambda_0$. Hence $\lambda_0=0$.
		
		Finally, arguing as in the proof of Theorem \ref{th:rigidity1}, we conclude that $u$ depends only on $x_N$ and is monotone increasing in $x_N$.
	\end{proof}
	
	\begin{proof}[Proof of Proposition \ref{prop:rigidity}]
		If $u$ is a bounded solution, then by \cite[Proposition 2.2]{MR2056284}, we have $u<u\le t_0$. The strong comparison principle implies $u<u<t_0$. Now Lemma \ref{lem:positive} gives $\lim_{x_N\to0} u(x)= t_0$ uniformly in $x'\in\mathbb{R}^{N-1}$. Then, using (iii) and arguing as in Theorem \ref{th:rigidity1}, we get the 1D symmetry and monotonicity of $u$.
		
		Now if \eqref{bounded_cond} is satisfied, then by exploiting \cite[Proposition 2.3]{MR2056284} we deduce that $u$ is bounded and the conclusion follows as before.
	\end{proof}
	
	\section{On the pure singular problem}\label{sect5}
	In this section, we deal with problem \eqref{pure}. First of all, we prove Theorem \ref{th:decreasing} since we need the monotonicity result in analyzing problem \eqref{pure} in dimension one.
	
	\subsection{Strictly decreasing nonlinearity}
	\begin{proof}[Proof of Theorem \ref{th:decreasing}]
		Since $\lim_{t\to0^+} f(t) > 0$, we have $f(t) > c_0$ in $(0, t_0)$ for some $t_0,c_0>0$.		
		Let any $\lambda>0$ and set $t_1=\|u\|_{L^\infty(\Sigma_\lambda)} + 1$.
		Using Lemma \ref{lem:lower_bound}, one may check that
		\[
		\begin{cases}
			-\Delta_p u = f(u) &\text{ in } \Sigma_\lambda,\\
			-\Delta_p u_\lambda = f(u_\lambda) &\text{ in } \Sigma_\lambda,\\
			0<u<t_1 &\text{ in } \Sigma_\lambda,\\
			\min\{C \lambda, t_0\} \le u_\lambda\le\|u\|_{L^\infty(\Sigma_{2\lambda})} &\text{ in } \Sigma_\lambda,\\
			u \le u_\lambda &\text{ on } \partial\Sigma_\lambda,
		\end{cases}
		\]
		where $t_0>0$.
		Hence, Proposition \ref{prop:wcp} yields $u\leq u_\lambda$ in $\Sigma_\lambda$ for all $\lambda>0$. Therefore, $u$ is monotone increasing in $x_N$.
		
		Now suppose that \eqref{uni_gradient_zero} holds. Let any $\nu\in \mathbb{S}^{N-1}_+$ and $\lambda>0$. From \eqref{uni_gradient_zero} and the mean value theorem, we deduce
		\[
		\lim_{x_N\to+\infty} (u-u_\lambda^\nu) = 0 \text{ uniformly in } x'\in\mathbb{R}^{N-1}.
		\]
		Set $t_2 = \frac{1}{2}\min\{C \lambda, t_0\}$, we have
		\[
		\begin{cases}
			-\Delta_p u = f(u) &\text{ in } \mathbb{R}^N_+,\\
			-\Delta_p u_\lambda^\nu = f(u_\lambda^\nu) &\text{ in } \mathbb{R}^N_+,\\
			u>0 &\text{ in } \mathbb{R}^N_+,\\
			u_\lambda^\nu>t_2 &\text{ in } \mathbb{R}^N_+,\\
			u \le u_\lambda^\nu &\text{ on } \partial\mathbb{R}^N_+.
		\end{cases}
		\]
		Hence we can apply Proposition \ref{prop:wcp2} to derive
		\[
		u \le u_\lambda^\nu \,\text{ in } \mathbb{R}^N_+ \quad\text{ for all } \lambda>0.
		\]
		As in the proof of Theorem \ref{th:rigidity2}, this implies that $u$ depends only on $x_N$ and is monotone increasing in $x_N$.
		
		If, instead of \eqref{uni_gradient_zero}, we assume $\limsup_{t\to+\infty} f'(t) < 0$ and $|\nabla u| \in L^\infty(\mathbb{R}^N\setminus\Sigma_\lambda)$ for all $\lambda>0$. Then we can exploit Proposition \ref{prop:wcp3} to get the thesis.
	\end{proof}
	
	\subsection{Dimension one}
	
	In this subsection, we classify all solutions to the ODE problem
	\begin{equation}\label{ode}
		\begin{cases}
			-(|v'|^{p-2} v')' = \dfrac{1}{v^\gamma} & \text{ in } \mathbb{R}_+,\\
			v(t)>0 & \text{ in } \mathbb{R}_+,\\
			v(0)=0,
		\end{cases}
	\end{equation}
	where $p>1$ and $\gamma>0$.
	
	\begin{proof}[Proof of Theorem \ref{th:1D}]
		By Theorem \ref{th:decreasing}, we know that $v'\ge0$ in $\mathbb{R}_+$. We show that actually $v'>0$ in $\mathbb{R}_+$.
		
		Clearly, there exists at least one $t_0\in\mathbb{R}_+$ such that $v'(t_0)>0$. Let $(a,b)$ be the maximal interval containing $t_0$ such that $v'>0$ in $(a,b)$. We need to show that $a=0$ and $b=+\infty$. We only prove the latter since the former can be done similarly.
		
		Assume now that $b<+\infty$ and $v'(b)=0$. The case $v'=0$ in $(b,+\infty)$ cannot happen since it contradicts the first equation of \eqref{ode}. Hence, there exists some $t_1>b$ with $v'(t_1)>0$. Consider the maximal interval $(t_2,t_1] \subset (b, t_1]$ such that
		\[
		v' > 0 \text{ in } (t_2,t_1], \quad v'(t_2)=0.
		\]
		From the standard elliptic regularity, we know that $v$ is $C^2$ in $(t_2,t_1]$. Hence, in this interval, it holds
		\begin{equation}\label{kt}
			(p-1) (v')^{p-2} v'' = -\frac{1}{v^\gamma}.
		\end{equation}
		Hence $v''(t) < 0$ in $(t_2,t_1]$. This implies $v'(t_2) > v'(t_1)$, which is a contradiction.
		
		We have showed that $v'>0$ in $\mathbb{R}_+$. Hence $v\in C^2(\mathbb{R}_+)$ by the standard elliptic regularity and $v$ verifies \eqref{kt} in $\mathbb{R}_+$. This implies
		\begin{align}
			\frac{p-1}{p} (v')^p - \frac{v^{1-\gamma}}{\gamma-1} = M \text{ in } \mathbb{R}_+ &\quad\text{ if } 0<\gamma\ne1,\label{k3}\\
			\frac{p-1}{p} (v')^p + \ln v = M \text{ in } \mathbb{R}_+ &\quad\text{ if } \gamma=1,\label{k4}
		\end{align}
		for some constant $M$. On the other hand, by Lemma \ref{lem:lower_bound}, we have $\lim_{t\to+\infty}v(t)=+\infty$. Therefore, both \eqref{k3} with $0<\gamma<1$ and \eqref{k4} yield a contradiction by letting $t\to+\infty$. This means that \eqref{ode} has no solution for $0<\gamma<1$.
		
		In what follows, we assume $\gamma>1$. Letting $t\to+\infty$ in \eqref{k3}, we deduce $M\ge0$. Now we rewrite \eqref{k3} as
		\[
		\left(M + \frac{v^{1-\gamma}}{\gamma-1}\right)^{-\frac{1}{p}} v' = \left(\frac{p}{p-1}\right)^\frac{1}{p} \quad\text{ in } \mathbb{R}_+.
		\]
		By integrating and using $v(0)=0$, this gives
		\begin{equation}\label{gh}
			\int_{0}^{v(t)} \left(M + \frac{s^{1-\gamma}}{\gamma-1}\right)^{-\frac{1}{p}} ds = \left(\frac{p}{p-1}\right)^\frac{1}{p}t \quad\text{ for all } t\in\mathbb{R}_+.
		\end{equation}
		
		Conversely, for every $M\ge0$ we have
		\[
		\int_{0}^{+\infty} \left(M + \frac{s^{1-\gamma}}{\gamma-1}\right)^{-\frac{1}{p}} ds = +\infty \text{ and }
		\int_{0}^{t} \left(M + \frac{s^{1-\gamma}}{\gamma-1}\right)^{-\frac{1}{p}} ds < +\infty \text{ for all } t>0.
		\]
		Therefore, for each $M\ge0$, formula \eqref{gh} uniquely determines a function $v:=v_M$ which is a solution to \eqref{ode}. Using \eqref{k3}, we see that these solutions are characterized by the limit
		\[
		\lim_{t\to+\infty} v_M'(t) = \left(\frac{Mp}{p-1}\right)^\frac{1}{p}.
		\]
		
		When $M=0$, a direct calculation yields
		\[
		v_0(t) = \left[\frac{(\gamma+p-1)^p}{p^{p-1}(p-1)(\gamma-1)}\right]^\frac{1}{\gamma+p-1}	t^\frac{p}{\gamma+p-1}.
		\]		
		Moreover, by change of variables in \eqref{gh}, we can show that all other solutions are related to each other via the formula $v_M(t) \equiv \lambda^{-\frac{p}{\gamma+p-1}} v_1(\lambda t)$, where $M=\lambda^\frac{(\gamma-1)p}{\gamma+p-1}$.
		
		This completes the proof.
	\end{proof}
	
	\subsection{Higher dimensions}
	We employ the technique from \cite[Proposition 5]{MR4753083} to show that solution $u$ of problem \eqref{pure} grows at most at a linear rate as $x_N \to +\infty$.
	\begin{lemma}\label{lem:upper_bound_infty}
		Let $1<p\le N$, $\gamma>0$ and let $u \in C^{1,\alpha}_{\rm loc}(\mathbb{R}^N_+) \cap C(\overline{\mathbb{R}^N_+})$ be a solution to problem \eqref{pure} with $u\in L^\infty(\Sigma_{\overline\lambda})$ for some $\overline\lambda>0$.
		Then there exists a positive constant $C = C(p, \gamma, \theta, N)$ such that
		\[
		u(x) \le C x_N \quad\text{ in } \mathbb{R}^N_+\setminus\Sigma_{\overline{\lambda}}.
		\]
	\end{lemma}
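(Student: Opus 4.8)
The plan is to adapt the proof of \cite[Proposition 5]{MR4753083} from $p=2$ to general $p$: first use the lower bound of Lemma \ref{lem:lower_bound} to see that the singular term $u^{-\gamma}$ is harmless away from $\partial\mathbb{R}^N_+$, then trap $u$ from above by a one‑dimensional supersolution that grows at most linearly, and finally invoke a weak comparison principle on the slab $\mathbb{R}^N_+\setminus\Sigma_{\overline\lambda}$.

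\emph{Step 1 (control of the nonlinearity).} Since $f(t)=t^{-\gamma}>\tfrac12\,t^{-\gamma}$ for every $t>0$, Lemma \ref{lem:lower_bound} applied with $c_0=\tfrac12$, together with the fact that the constant it produces does not depend on its threshold, yields $c_\ast>0$ with
\[
u(x)\ \ge\ c_\ast\, x_N^{\frac{p}{\gamma+p-1}}\qquad\text{in }\mathbb{R}^N_+ .
\]
Hence $0<u^{-\gamma}\le c_\ast^{-\gamma}\,x_N^{-\frac{\gamma p}{\gamma+p-1}}$, which is bounded on $\mathbb{R}^N_+\setminus\Sigma_{\overline\lambda}$; combined with $u\in L^\infty(\Sigma_{\overline\lambda})$ and the interior–boundary estimates of \cite{MR969499,MR1814364}, this shows that $u$ is bounded on every bounded subset of $\overline{\mathbb{R}^N_+}$, so $M_0:=\sup_{\{x_N=\overline\lambda\}}u<+\infty$.

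\emph{Step 2 (linearly growing supersolution).} Note that $\tfrac{\gamma p}{\gamma+p-1}>1$ exactly when $\gamma>1$. In that case the one–dimensional problem
\[
-\bigl((\phi')^{p-1}\bigr)'=c_\ast^{-\gamma}\,t^{-\frac{\gamma p}{\gamma+p-1}}\ \text{ on }(\overline\lambda,+\infty),\qquad \phi(\overline\lambda)=M_0,\quad \phi'(\overline\lambda)=A,
\]
has, for $A$ large, an increasing solution with $(\phi')^{p-1}$ decreasing but staying above a positive constant (the right–hand side is integrable at $+\infty$), hence $\phi(t)\le Ct$. Equivalently, for $\gamma>1$ one may use the shifted one–dimensional solutions $\Phi(x):=v_M(x_N+\sigma)$ of Theorem \ref{th:1D}, with $M>0$ and $\sigma$ so large that $\Phi\ge M_0$ on $\{x_N=\overline\lambda\}$: these are \emph{exact} solutions of $-\Delta_p w=w^{-\gamma}$ on $\{x_N>\overline\lambda\}$ with $\Phi(x)\le Cx_N$. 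In either case the barrier $\Phi=\Phi(x_N)$ satisfies $-\Delta_p\Phi\ge u^{-\gamma}=-\Delta_p u$ in $\mathbb{R}^N_+\setminus\Sigma_{\overline\lambda}$, $\Phi>t_0$ there for some $t_0>0$, and $u\le\Phi$ on $\{x_N=\overline\lambda\}$.

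\emph{Step 3 (comparison and conclusion).} It remains to deduce $u\le\Phi$, i.e. $u(x)\le Cx_N$. Since $\mathbb{R}^N_+\setminus\Sigma_{\overline\lambda}$ is unbounded also in the $x_N$–direction and we have no a priori control of $u-\Phi$ or of $\nabla u$ at infinity, Propositions \ref{prop:wcp2}–\ref{prop:wcp3} do not apply directly; instead I would repeat the test–function computation from the proof of Proposition \ref{prop:wcp}, testing the two equations against $w^{\alpha}\varphi_R^{\alpha+1}$ with $w=(u-\Phi-\varepsilon)^+$ and $\varphi_R$ a cutoff in \emph{both} $x'$ and $x_N$. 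On $\{w>0\}$ one has $u>\Phi$, so (as $f$ is decreasing) $u^{-\gamma}-\Phi^{-\gamma}\le0$ and, by Step 1, $u$ and $\nabla u$ are locally bounded; the gradient terms are then absorbed via a weighted Young inequality, and the remaining cutoff term, of order $R^{N-\alpha-2}$, tends to $0$ once $\alpha>N-2$, forcing $w\equiv0$ and hence $u\le\Phi\le Cx_N$.

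The main obstacle is precisely this comparison in a domain unbounded in $x_N$, together with the construction of a linearly growing supersolution in the remaining range $0<\gamma\le1$, where $t^{-\gamma p/(\gamma+p-1)}$ is no longer integrable at infinity and no purely one–dimensional barrier exists; that range — which is where the hypothesis $1<p\le N$ and a genuinely $N$‑dimensional comparison argument come into play — is handled after first bootstrapping a rough polynomial upper bound for $u$ so that the barrier argument can be closed.
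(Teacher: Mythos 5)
The decisive gap is in Step 3. The comparison you want on $\mathbb{R}^N_+\setminus\Sigma_{\overline\lambda}$ --- ordering on the hyperplane $\{x_N=\overline\lambda\}$ plus the supersolution property of $\Phi$ implies $u\le\Phi$ --- is simply false without some control of $u-\Phi$ (or of the gradients) as $x_N\to+\infty$, and such control is precisely what the lemma is meant to provide. Concretely, take $u(x)=v_{M'}(x_N)$ and $\Phi(x)=v_{M}(x_N+\sigma)$ with $v_M,v_{M'}$ the one-dimensional solutions of Theorem \ref{th:1D} and $M'\gg M$: both solve the equation, $\sigma$ can be chosen so that $u\le\Phi$ on $\{x_N=\overline\lambda\}$, yet $u>\Phi$ for large $x_N$ because $v_{M'}'(+\infty)=\bigl(\tfrac{M'p}{p-1}\bigr)^{1/p}>v_{M}'(+\infty)$. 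Hence no barrier with a priori fixed slope, chosen only from $M_0=\sup_{\{x_N=\overline\lambda\}}u$, can dominate $u$ unless one first bounds the growth of $u$, which is circular. The Caccioppoli computation you sketch also does not close: the absorption in Proposition \ref{prop:wcp} relies on the strictly negative term $-C_\varepsilon\int w^{\alpha+1}\varphi_R^{\alpha+1}$ coming from Lemma \ref{lem:k}, which fails for $f(t)=t^{-\gamma}$ on unbounded ranges (as $f'(t)\to0$ at infinity), so merely having $u^{-\gamma}-\Phi^{-\gamma}\le0$ on $\{w>0\}$ is not enough to absorb the cutoff term; moreover the $R^{N-\alpha-2}$ bound needs $w$, $|\nabla u|$, $|\nabla \Phi|$ uniformly bounded on $\{w>0\}$, a set now unbounded in $x_N$ where no such uniform bounds are available a priori. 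Finally, the range $0<\gamma\le1$, where no linearly growing one-dimensional barrier exists, is only deferred to an unspecified bootstrapping, and your argument never uses the hypothesis $p\le N$, which is in fact essential.

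For contrast, the paper's proof is local rather than a global barrier comparison: after rescaling so that $u$ is bounded on $\Sigma_2$, for a point $x_0$ with $x_{0,N}=4R$ it applies Harnack's inequality (the zeroth-order coefficient $u^{-\gamma-1}$ is bounded on the relevant balls thanks to Lemma \ref{lem:lower_bound}) to get $u\ge C_H^{-1}u(x_0)$ on $\partial B_R(x_0)$, and then compares $u$ \emph{from below} on the annulus $B_{4R}(x_0)\setminus B_R(x_0)$ with the $p$-harmonic fundamental solution (logarithmic when $p=N$; this is exactly where $1<p\le N$ enters), normalized to equal $C_H^{-1}u(x_0)$ on $\partial B_R(x_0)$ and to vanish on $\partial B_{4R}(x_0)$. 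Evaluating at $(x_0',1)\in\Sigma_2$ gives $u(x_0',1)\ge c\,u(x_0)/R$, and the boundedness of $u$ on $\Sigma_2$ then yields $u(x_0)\le CR=\tfrac{C}{4}x_{0,N}$, uniformly in $\gamma>0$. If you wish to rescue a barrier-type argument, you would first have to establish an a priori growth or gradient bound at infinity --- which is exactly the information the Harnack--fundamental-solution step supplies --- so as written your proposal does not prove the lemma.
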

	\begin{proof}	
		If $u$ is a solution to \eqref{pure}, then
		\begin{equation}\label{v}
			v(x) := \left(\frac{\overline\lambda}{2}\right)^{-\frac{p}{\gamma+p-1}} u\left(\frac{\overline\lambda}{2}x\right)
		\end{equation}
		is also a solution. Therefore, we may assume that our solution $u$ is bounded in the strip $\Sigma_2$.
		
		Let any $x_0 = (x_0', x_{0,N}) \in \mathbb{R}^N_+$ with $x_{0,N}:=4R>2$. We set
		\[
		u_R(x) := R^{-\frac{p}{\gamma+p-1}} u(x_0+R(x-x_0)),
		\]
		then $u_R>0$ in $B_4(x_0)$ and
		\[
		-\Delta_p u_R = \frac{1}{u_R^{\gamma+1}} u_R \quad\text{ in } B_4(x_0).
		\]
		
		By Lemma \ref{lem:lower_bound}, we have
		\[
		u_R^{\gamma+1}(x) = \left(R^{-\frac{p}{\gamma+p-1}} u(x_0+R(x-x_0))\right)^{\gamma+1} \ge C^{\gamma+1} \quad\text{ in } B_2(x_0).
		\]
		Hence
		\[
		c(x) := \frac{1}{u_R^{\gamma+1}} \le \frac{1}{C^{\gamma+1}},
		\]
		where $C$ is independent of $x_0$.		
		By Harnack's inequality, we have
		\[
		\sup_{B_1(x_0)} u_R \le C_H \inf_{B_1(x_0)} u_R,
		\]
		where $C_H=C_H(N,p,\gamma)$. In particular, by setting $u_0=u(x_0)$, we have
		\[
		u_0 \le \sup_{B_R(x_0)} u = R^{\frac{p}{\gamma+p-1}} \sup_{B_1(x_0)} u_R \le C_H R^{\frac{p}{\gamma+p-1}} \inf_{B_1(x_0)} u_R = C_H \inf_{B_R(x_0)} u \le C_H u(x)
		\]
		for all $x\in B_R(x_0)$. Hence
		\[
		u(x) \ge C_H^{-1} u_0 \quad\text{ on } \partial B_R(x_0).
		\]
		
		Now we consider, for $p<N$, the fundamental solution of the $p$-Laplace operator
		\[
		v_{c,k} = c\left(\frac{1}{|x-x_0|^\frac{N-p}{p-1}}+k\right),
		\]
		which satisfies
		\[
		\Delta_p v_{c,k} = 0 \quad\text{ in } \mathbb{R}^N_+\setminus\{x_0\}
		\]
		for all $c,k\in\mathbb{R}$. We can choose $c,k$ such that
		\[
		\begin{cases}
			v_{c,k} = C_H^{-1} u_0 & \text{ on }  \partial B_R(x_0),\\
			v_{c,k} = 0 & \text{ on }  \partial B_{4R}(x_0).
		\end{cases}
		\]
		More precisely, the above condition is fulfilled with
		\[
		c=\frac{C_H^{-1}u_0(4R)^\frac{N-p}{p-1}}{4^\frac{N-p}{p-1}-1} := \tilde{c}u_0R^\frac{N-p}{p-1} \quad\text{ and }\quad k=-\frac{1}{(4R)^\frac{N-p}{p-1}}.
		\]
		
		Using $(v_{c,k}-u-\varepsilon)^+$, where $\varepsilon>0$, as a test function in $\Delta_p v_{c,k} = 0$ and $-\Delta_p u = \frac{1}{u^\gamma}$, we get
		\begin{align*}
			&\int_{B_{4R}(x_0) \setminus B_R(x_0)} (|\nabla v_{c,k}|^{p-2}\nabla v_{c,k} - |\nabla u|^{p-2}\nabla u, \nabla (v_{c,k}-u-\varepsilon)^+) \\
			&= \int_{B_R(x_0)} \left(0 - \frac{1}{u^\gamma}\right) (v_{c,k}-u)^+ \le 0.
		\end{align*}
		Hence $v_{c,k} \le u+\varepsilon$ in $B_{4R}(x_0) \setminus B_R(x_0)$ for all $\varepsilon>0$. Therefore,
		\[
		u \ge v_{c,k} \quad \text{ in } B_{4R}(x_0) \setminus B_R(x_0).
		\]
		In particular,
		\begin{align*}
			u(x_0',1) &\ge v_{c,k}(x_0',1)\\
			&= c\left(\frac{1}{|(x_0',1)-(x_0',x_{0,N})|^\frac{N-p}{p-1}}+k\right)\\
			&= \tilde{c}u_0R^\frac{N-p}{p-1} \left(\frac{1}{|4R-1|^\frac{N-p}{p-1}}-\frac{1}{(4R)^\frac{N-p}{p-1}}\right).
		\end{align*}
		By the mean value theorem for the function $h(t)=\frac{1}{t^\frac{N-p}{p-1}}$, we have
		\[
		\frac{1}{|4R-1|^\frac{N-p}{p-1}}-\frac{1}{(4R)^\frac{N-p}{p-1}} \ge \frac{N-p}{p-1}\frac{1}{(4R)^\frac{N-1}{p-1}}.
		\]
		Therefore,
		\[
		u(x_0',1) \ge \frac{N-p}{p-1} \frac{ \tilde{c}u_0}{4^\frac{N-1}{p-1}R}.
		\]
		
		Since $u \in L^\infty(\Sigma_2)$, we deduce
		\[
		u(x_0) = u_0 \le CR,
		\]
		where $C$ does not depend on $R$. Since $x_0$ is arbitrary and $x_{0,N}=4R$, we obtain that
		\[
		u(x) \le \frac{C}{4} x_N \quad\text{ in } \mathbb{R}^N_+\setminus\Sigma_2.
		\]
		
		Scaling back, using \eqref{v}, we obtain the thesis for $p < N$.
		
		The	case $p = N$ follows by repeating the same argument but replacing the
		fundamental solutions with the logarithmic one	
		\[
		w_{c,k} = c\left(k-\ln|x-x_0|\right).
		\]
		More precisely, by choosing
		\[
		c=\frac{C_H^{-1}u_0}{\ln 4} := \hat{c}u_0 \quad\text{ and }\quad k=\ln(4R),
		\]
		we have
		\[
		\begin{cases}
			w_{c,k} = C_H^{-1} u_0 & \text{ on }  \partial B_R(x_0),\\
			w_{c,k} = 0 & \text{ on }  \partial B_{4R}(x_0).
		\end{cases}
		\]
		Hence, as before
		\[
		u \ge w_{c,k} \quad \text{ in } B_{4R}(x_0) \setminus B_R(x_0).
		\]
		In particular,
		\begin{align*}
			u(x_0',1) &\ge w_{c,k}(x_0',1)\\
			&= c\left(k-\ln|(x_0',1)-(x_0',x_{0,N})|\right)\\
			&= \hat{c}u_0\left(\ln(4R)-\ln(4R-1)\right)\\
			&\ge \frac{\hat{c}u_0}{4R}.
		\end{align*}
		Then we get the thesis as in the previous case.
		This completes the proof.
	\end{proof}
	
	Given the previous asymptotic bounds on \( u \), we can apply the scaling technique as in \cite[Proposition 7]{MR4753083} to establish a bound on the gradient.
	\begin{lemma}
		Let $1<p\le N$, $\gamma\ge1$ and let $u \in C^{1,\alpha}_{\rm loc}(\mathbb{R}^N_+) \cap C(\overline{\mathbb{R}^N_+})$ be a solution to problem \eqref{pure} with $u\in L^\infty(\Sigma_{\overline\lambda})$ for some $\overline\lambda>0$. Then there exists a positive constant $C_\lambda > 0$ such that
		\[
		|\nabla u(x)| \le C_\lambda \quad\text{ in } \mathbb{R}^N_+\setminus\Sigma_\lambda
		\]
		for all $\lambda>0$.
	\end{lemma}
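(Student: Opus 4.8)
The plan is to combine the a~priori bounds already established with a rescaling argument, following the technique of \cite[Proposition~7]{MR4753083}, together with the standard interior gradient estimates for the $p$-Laplacian \cite{MR709038,MR727034}. The two ingredients I will use are: the \emph{linear} upper bound $u(x)\le Cx_N$ on $\mathbb{R}^N_+\setminus\Sigma_{\overline\lambda}$ from Lemma~\ref{lem:upper_bound_infty} (together with $u(x)\le Cx_N^{p/(\gamma+p-1)}$ on $\Sigma_{\overline\lambda}$ from Lemma~\ref{lem:upper_bound}, which in particular shows that $u$ is bounded on every bounded slab $\{0<a\le x_N\le b\}$); and a \emph{power} lower bound. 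For the latter I would apply Lemma~\ref{lem:lower_bound} with $f(t)=t^{-\gamma}$ and $c_0\in(0,1)$: inspecting its proof, the constant produced there equals $w(0)$ and does not depend on the cut-off $t_0$, so letting $t_0\to+\infty$ yields a constant $c_*=c_*(N,p,\gamma)>0$ with
\[
u(x)\ge c_*\,x_N^{p/(\gamma+p-1)}\qquad\text{in }\mathbb{R}^N_+ .
\]
In particular $u$ is bounded away from $0$ on $\{x_N\ge\lambda\}$ for every $\lambda>0$.

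Fix $\lambda>0$. On the slab $\{\lambda/2<x_N<3\overline\lambda\}$ (which is empty unless $\lambda<2\overline\lambda$) the function $u$ is bounded above by the upper bounds above and below by $c_*(\lambda/2)^{p/(\gamma+p-1)}>0$, hence $f(u)=u^{-\gamma}$ is bounded there. Since this slab is compactly contained in $\mathbb{R}^N_+$, the interior $C^{1,\alpha}$ estimates of \cite{MR709038,MR727034} give $|\nabla u|\le C_\lambda$ on $\{\lambda\le x_N\le 2\overline\lambda\}$.

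It remains to bound $|\nabla u|$ for $x_N$ large. Let $x_0=(x_0',x_{0,N})$ with $x_{0,N}=2R$ and $R\ge\overline\lambda$, so that $B_R(x_0)\subset\mathbb{R}^N_+$ and $x_N\in[R,3R]$ on $B_R(x_0)$. Set $a:=\|u\|_{L^\infty(B_R(x_0))}$ and $v(y):=a^{-1}u(x_0+Ry)$ for $y\in B_1(0)$; then $0\le v\le 1$ and $v$ weakly solves
\[
-\Delta_p v = a^{1-p-\gamma}R^{p}\,v^{-\gamma}=:g\qquad\text{in }B_1(0).
\]
Using the power lower bound, $v\ge a^{-1}c_*R^{p/(\gamma+p-1)}$ on $B_1(0)$ (since $x_N\ge R$ there), together with $a\ge u(x_0)\ge c_*(2R)^{p/(\gamma+p-1)}$, a direct computation shows that all powers of $R$ cancel, so that $\|g\|_{L^\infty(B_1)}\le C(N,p,\gamma)$. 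The interior gradient estimate for $-\Delta_p v=g$ with $g\in L^\infty(B_1)$ and $0\le v\le 1$ then gives $|\nabla v(0)|\le C(N,p,\gamma)$. Undoing the scaling, $|\nabla u(x_0)|=aR^{-1}|\nabla v(0)|$, and since $a\le 3CR$ by the linear upper bound, we obtain $|\nabla u(x_0)|\le C$ with $C$ not depending on $x_0$ (only on $u$ through the linear-growth constant). This bounds $|\nabla u|$ on $\mathbb{R}^N_+\setminus\Sigma_{2\overline\lambda}$; combined with the previous paragraph it proves the claim for all $\lambda\in(0,2\overline\lambda)$, and the bound on $\mathbb{R}^N_+\setminus\Sigma_{2\overline\lambda}$ already covers $\lambda\ge2\overline\lambda$.

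The steps I expect to require the most care are the bookkeeping in the lower bound (getting a constant independent of the threshold $t_0$, hence $u\gtrsim x_N^{p/(\gamma+p-1)}$ globally) and the exact cancellation of exponents that makes $\|g\|_{L^\infty(B_1)}$ scale invariant; once that is in place, the rescaled equation has uniformly bounded data and the standard gradient estimate closes the argument. Note that the hypothesis $p\le N$ enters only through Lemma~\ref{lem:upper_bound_infty}: it is the at-most-linear growth of $u$ that keeps the factor $aR^{-1}$ bounded when the scaling is reversed.
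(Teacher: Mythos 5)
Your argument is correct and is essentially the paper's own proof: a rescaling by the distance to the boundary, combined with the linear upper bound of Lemma \ref{lem:upper_bound_infty}, the power-type lower bound of Lemma \ref{lem:lower_bound} (which makes the rescaled right-hand side uniformly bounded precisely because $\gamma\ge1$), and the standard interior gradient estimates. The only differences are cosmetic bookkeeping (normalizing by the sup-norm $a$ and splitting off an intermediate slab, where the paper simply sets $u_R(x)=u(Rx)/R$ and treats all $x_{0,N}>\lambda$ at once); your explicit remark that the constant in Lemma \ref{lem:lower_bound} is independent of $t_0$, giving $u\ge c_*x_N^{p/(\gamma+p-1)}$ globally, is exactly the point the paper uses implicitly.
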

	\begin{proof}
		Let $x_0\in\mathbb{R}^N_+\setminus\Sigma_\lambda$ and set $R=x_{0,N}>\lambda$. We define
		\[
		u_R(x) := \frac{u(Rx)}{R} \quad\text{ in } B_{\frac{1}{2}}\left(\frac{x_0}{R}\right).
		\]
		By Lemma \ref{lem:upper_bound_infty} we have $u_R \le C_\lambda$. Moreover, from Lemma \ref{lem:lower_bound}, we deduce
		\[
		-\Delta_p u_R = \frac{R}{u^\gamma(Rx)} \le CR^{-\frac{(\gamma-1)(p-1)}{\gamma+p-1}} < C_\lambda' \quad\text{ in } B_{\frac{1}{2}}\left(\frac{x_0}{R}\right),
		\]
		where $C_\lambda'$ is independent of $x_0$.		
		By the standard gradient estimate, we have $|\nabla u_R| \le C_\lambda''$ in $B_{\frac{1}{4}}\left(\frac{x_0}{R}\right)$. This indicates $|\nabla u| \le C_\lambda''$ in $B_{\frac{R}{4}}(x_0)$. The thesis follows from the arbitrariness of $x_0$.
	\end{proof}
	
	Similarly, we have the following estimate on the gradient of solutions if \eqref{sublinear} holds.
	\begin{lemma}\label{lem:upper_bound_gradient2}
		Let $p>1$, $\gamma\ge1$  and let $u \in C^{1,\alpha}_{\rm loc}(\mathbb{R}^N_+) \cap C(\overline{\mathbb{R}^N_+})$ be a solution to problem \eqref{pure} satisfying \eqref{sublinear}. Then
		\[
		\lim_{x_N\to+\infty} |\nabla u(x',x_N)|=0 \text{ uniformly in } x'\in\mathbb{R}^{N-1}_+.
		\]
	\end{lemma}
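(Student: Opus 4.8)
The plan is to run a blow-down rescaling argument together with the interior $C^{1,\alpha}$ estimates for the $p$-Laplacian, much as in the proof of the previous lemma, the new ingredient being the sublinearity assumption \eqref{sublinear}. The first step is to upgrade the lower bound of Lemma \ref{lem:lower_bound}: since $\frac{1}{t^{\gamma}}>\frac{1}{2t^{\gamma}}$ for every $t>0$ and the constant $C$ provided by that lemma does not depend on $t_0$, letting $t_0\to+\infty$ yields a constant $c_0>0$ with
\[
u(x)\ge c_0\,x_N^{\frac{p}{\gamma+p-1}}\qquad\text{for all }x\in\mathbb{R}^N_+,
\]
i.e., the polynomial lower bound holds globally, not only on a strip.

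I would then argue by contradiction: assume there are $\varepsilon_0>0$ and points $x_n=(x_n',x_{n,N})\in\mathbb{R}^N_+$ with $R_n:=x_{n,N}\to+\infty$ and $|\nabla u(x_n)|\ge\varepsilon_0$. For $n$ large (so that $R_n\ge1$) set
\[
w_n(x):=\frac{1}{R_n}\,u(x_n'+R_nx',\,R_nx_N),\qquad x\in B_{1/2}(e_N),
\]
which makes sense because the ball $B_{R_n/2}(x_n)$ lies compactly in $\mathbb{R}^N_+$. A direct computation shows that $w_n$ weakly solves $-\Delta_p w_n=g_n$ in $B_{1/2}(e_N)$, where $g_n(x):=R_n\,u(x_n'+R_nx',R_nx_N)^{-\gamma}$, and that $\nabla w_n(e_N)=\nabla u(x_n)$ by the chain rule. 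Two estimates have to be extracted. Since the last coordinate of the argument of $u$ in the definition of $w_n$ lies in $[R_n/2,3R_n/2]$ for $x\in B_{1/2}(e_N)$, the global lower bound gives $u(\cdots)\ge c_0(R_n/2)^{p/(\gamma+p-1)}$, hence
\[
0\le g_n(x)\le C\,R_n^{\,\frac{(1-\gamma)(p-1)}{\gamma+p-1}}\le C\qquad\text{in }B_{1/2}(e_N),
\]
uniformly in $n$, because $\gamma\ge1$, $p>1$ and $R_n\ge1$. On the other hand, writing $w_n(x)=x_N\cdot u(\cdots)/(R_nx_N)$ and using \eqref{sublinear} (which forces $u(y)/y_N\to0$ as $y_N\to+\infty$ uniformly in $y'$) gives $\|w_n\|_{L^\infty(B_{1/2}(e_N))}\to0$.

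To conclude, I would invoke the interior $C^{1,\alpha}$ regularity estimates \cite{MR709038,MR727034,MR969499}: as $\|w_n\|_{L^\infty(B_{1/2}(e_N))}$ and $\|g_n\|_{L^\infty(B_{1/2}(e_N))}$ are uniformly bounded, $(w_n)$ is bounded in $C^{1,\alpha}(\overline{B_{1/4}(e_N)})$, so by Arzel\`a--Ascoli a subsequence converges in $C^1(\overline{B_{1/4}(e_N)})$; since $w_n\to0$ uniformly, the limit is identically $0$, so $\nabla w_n(e_N)\to0$. This contradicts $|\nabla u(x_n)|=|\nabla w_n(e_N)|\ge\varepsilon_0$, and the lemma follows.

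I expect the main obstacle to be the uniform control of the forcing terms $g_n$: this is exactly where the global polynomial lower bound (rather than its strip version) and the assumption $\gamma\ge1$ enter, the latter guaranteeing that the exponent $(1-\gamma)(p-1)/(\gamma+p-1)$ is nonpositive, so that $R_n^{(1-\gamma)(p-1)/(\gamma+p-1)}$ remains bounded as $R_n\to+\infty$. Once this is in place, the uniformity of the $C^{1,\alpha}$ constant --- which depends only on $N$, $p$ and the $L^\infty$ norms of $w_n$ and $g_n$ --- and the compactness step are routine; a quantitative version of the estimate could, if desired, be obtained by additionally rescaling the domain of $w_n$ by $\|w_n\|_{L^\infty}^{(p-1)/p}$, but this refinement is not needed here.
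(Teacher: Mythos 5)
Your proof is correct and takes essentially the same route as the paper: rescale by the height $R_n$, use \eqref{sublinear} to get $L^\infty$-smallness of the rescaled functions, use the lower bound of Lemma \ref{lem:lower_bound} (which, as you rightly make explicit, holds globally for \eqref{pure} with a constant independent of $t_0$) together with $\gamma\ge1$ to bound the forcing term, and conclude by interior $C^{1,\alpha}$/gradient estimates. The only difference is presentational: the paper argues directly with the normalization $u_R(x)=u(Rx)/(\varepsilon R)$, whereas you package the same estimates as a compactness/contradiction argument.
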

	\begin{proof}
		Let $x_0\in\mathbb{R}^N_+\setminus\Sigma_\lambda$ and set $R=x_{0,N}>\lambda$. Then let $\varepsilon>0$ and define
		\[
		u_R(x) := \frac{u(Rx)}{\varepsilon R} \quad\text{ in } B_{\frac{1}{2}}\left(\frac{x_0}{R}\right).
		\]
		
		By assumption \eqref{sublinear}, there exists $R_\varepsilon>0$ such that $u_R \le 1$ in $B_{\frac{1}{2}}\left(\frac{x_0}{R}\right)$ whenever $R>R_\varepsilon$. Moreover, from Lemma \ref{lem:lower_bound}, we deduce
		\[
		-\Delta_p u_R = \frac{R}{\varepsilon^{p-1} u^\gamma(Rx)} \le \frac{C}{\varepsilon^{p-1}}R^{-\frac{(\gamma-1)(p-1)}{\gamma+p-1}} < C' \quad\text{ in } B_{\frac{1}{2}}\left(\frac{x_0}{R}\right),
		\]
		where $C'=C'(\varepsilon,R_\varepsilon)$ is independent of $R>R_\varepsilon$.		
		By the standard gradient estimate, we have $|\nabla u_R| \le C$ in $B_{\frac{1}{4}}\left(\frac{x_0}{R}\right)$. This indicates $|\nabla u| \le C\varepsilon$ in $B_{\frac{R}{4}}(x_0)$ for $R>R_\varepsilon$ and the thesis follows.
	\end{proof}
	
	Now we can conclude the last main result of this paper, namely, Theorem \ref{th:pure}.
	\begin{proof}[Proof of Theorem \ref{th:pure}]
		For all $\gamma>0$,	Proposition \ref{prop:zero_convergence} and Lemma \ref{lem:upper_bound_infty} imply that \eqref{bound_strips} holds. Estimate \eqref{asymptotic} is provided by Lemma \ref{lem:upper_bound_infty}. Now we differentiate between two cases:
		
		\textit{Case 1:} $\gamma>1$. By Lemma \ref{lem:upper_bound_gradient2}, condition \eqref{uni_gradient_zero} is fulfilled if \eqref{sublinear} holds.		
		The conclusion now follows from Theorem \ref{th:decreasing} with the aid of Theorem \ref{th:1D}.
		
		\textit{Case 2:} $0<\gamma<1$. In this case, Lemma \ref{lem:lower_bound} yields $u(x) \ge x_N^\frac{p}{\gamma+p-1}$ in $\mathbb{R}^N_+$, which is contradict to \eqref{asymptotic}. Hence, such solutions do not exist.
	\end{proof}
	
	\subsection{The conformal case} In this subsection, we study problem \eqref{pure} with $p=N$, namely,
	\begin{equation}\label{conformal}	
		\begin{cases}
			-\Delta_N u = \dfrac{1}{u^\gamma} & \text{ in } \mathbb{R}^N_+,\\
			u>0 & \text{ in } \mathbb{R}^N_+,\\
			u=0 & \text{ on } \partial\mathbb{R}^N_+
		\end{cases}
	\end{equation}
	This case is usually referred to as the conformal case, since $\Delta_N$ is invariant under the Kelvin transform
	\[
	\hat u(x) := u\left(\frac{x}{|x|^2}\right).
	\]
	Formally, we have $\Delta_N \hat u = \frac{1}{|x|^{2N}} (\Delta_N u) \left(\frac{x}{|x|^2}\right)$.
	
	\begin{proof}[Proof of Theorem \ref{th:pure_N}]
		Since $u \in C^{1,\alpha}_{\rm loc}(\mathbb{R}^N_+) \cap C(\overline{\mathbb{R}^N_+})$ is a solution to \eqref{conformal}, then one can verifies that $\hat u \in C^{1,\alpha}_{\rm loc}(\mathbb{R}^N_+) \cap C(\overline{\mathbb{R}^N_+}\setminus\{0\})$ and $\hat u$ solves
		\begin{equation}\label{kelvin}
			\begin{cases}
				-\Delta_N \hat u = \dfrac{1}{|x|^{2N}\hat u^\gamma} & \text{ in } \mathbb{R}^N_+,\\
				\hat u>0 & \text{ in } \mathbb{R}^N_+,\\
				\hat u=0 & \text{ on } \partial\mathbb{R}^N_+\setminus\{0\}.
			\end{cases}
		\end{equation}
		Moreover,
		\[
		\lim_{|x|\to+\infty} \hat u(x) = \lim_{|x|\to0} u(x) = 0.
		\]
		Solutions to \eqref{kelvin} are still understood in the weak sense
		\[
		\int_{\mathbb{R}^N_+} (|\nabla \hat u|^{N-2} \nabla \hat u, \nabla \varphi) = \int_{\mathbb{R}^N_+} \frac{\varphi}{|x|^{2N}\hat u^\gamma} \quad\text{ for all } \varphi\in C^1_c(\mathbb{R}^N_+).
		\]
		
		For any $\lambda<0$, we denote $\hat\Sigma_\lambda=\{x:=(x_1,\tilde x)\in\mathbb{R}^N_+ \mid x_1<\lambda\}$, $x_\lambda=(2\lambda-x_1, \tilde x)$ and $\hat u_\lambda(x) = \hat u(x_\lambda)$ in $\mathbb{R}^N_+$. By the reflection invariance of the $N$-Laplacian, we deduce
		\begin{equation}\label{kelvin_r}
			-\Delta_N \hat u_\lambda = \frac{1}{|x_\lambda|^{2N}\hat u_\lambda^\gamma} \text{ in } \mathbb{R}^N_+
		\end{equation}
		in the weak sense. Let any $\varepsilon>0$. We can find a small $\delta>0$ such that $\hat u<\varepsilon$ in $\mathbb{R}^N_+\cap B_\delta(0_\lambda)$. Now we set $w=(\hat u-\hat u_\lambda-\varepsilon)^+$, then $w=0$ in $\mathbb{R}^N_+\cap B_\delta(0_\lambda)$. Moreover, since
		\[
		\lim_{|x|\to+\infty} (\hat u(x)-\hat u_\lambda(x)) = 0 \quad\text{ and }\quad \hat u(x)-\hat u_\lambda(x)=0 \text{ on } \partial\mathbb{R}^N_+\setminus\{0, 0_\lambda\},
		\]
		we deduce that the support of $w$ is compactly contained in $\mathbb{R}^N_+\cap B_R$ for some $R>0$. Hence, we can use $w\chi_{\hat\Sigma_\lambda}$ as a test function in the weak formulation of \eqref{kelvin} and \eqref{kelvin_r}. By subtracting, we deduce
		\[
		\int_{\hat\Sigma_\lambda} (|\nabla \hat u|^{N-2}\nabla \hat u - |\nabla \hat u_\lambda|^{N-2}\nabla \hat u_\lambda, \nabla w) = \int_{\hat\Sigma_\lambda} \left(\dfrac{1}{|x|^{2N}\hat u^\gamma(x)} - \frac{1}{|x_\lambda|^{2N}\hat u_\lambda^\gamma(x)}\right) w \le 0
		\]
		since $\hat u \ge \hat u_\lambda$ on the support of $w$ and $|x|\ge|x_\lambda|$ in $\hat\Sigma_\lambda$. Using \eqref{p_ge}, we get
		\[
		\int_{\hat\Sigma_\lambda} (|\nabla \hat u| + |\nabla \hat u_\lambda|)^{N-2} |\nabla w| = 0.
		\]
		Hence $w=0$ in $\hat\Sigma_\lambda$, which means $\hat u\le \hat u_\lambda+\varepsilon$. Since $\varepsilon$ is arbitrary, we deduce $\hat u\le \hat u_\lambda$ in $\tilde\Sigma_\lambda$ for all $\lambda<0$.
		
		Repeating the argument in the opposite direction we conclude that $\hat u(x_1, \tilde{x})=\hat u(-x_1, \tilde{x})$. This indicates $u(x_1, \tilde{x})=u(-x_1, \tilde{x})$ for all $x\in\mathbb{R}^N_+$. Since problem \eqref{conformal} is invariant with respect to translation and rotation, we deduce that $u$ is symmetric with respect to any hyperplane perpendicular to $\partial\mathbb{R}^N_+$. In other words, $u$ depends only on $x_N$.	
		The explicit formula for $u$ in the case $\gamma>1$ and a nonexistence result in the case $0<\gamma\le1$ then follows from Theorem \ref{th:1D}.
	\end{proof}
	\section*{Statements and Declarations}
	
	
	\textbf{Data Availability} Data sharing not applicable to this article as no datasets were generated or analysed during the current study.
	
	\textbf{Competing Interests} The author has no competing interests to declare that are relevant to the content of this article.
	
	
	\bibliographystyle{abbrvurl}
	\bibliography{../../../references}
	
\end{document}